\documentclass{article}


\usepackage[section]{placeins}
\usepackage{xcolor}
\usepackage{lipsum}
\usepackage{amsfonts}
\usepackage{graphicx,graphics}
\usepackage{epstopdf}
\usepackage{algorithmic}
 \usepackage{ulem}
 \usepackage{dsfont}
  \usepackage{amsmath, amsthm}
  \usepackage{url}

\ifpdf
  \DeclareGraphicsExtensions{.eps,.pdf,.png,.jpg}
\else
  \DeclareGraphicsExtensions{.eps}
\fi
\usepackage{caption}
\usepackage{placeins}
\newtheorem{theorem}{Theorem}[subsection]
\newtheorem{lemma}[theorem]{Lemma}

\newtheorem{assumption}{Assumptions}[section]

\newcommand{\TheTitle}{Kinetic/Fluid micro-macro numerical scheme for a two component gas mixtures} 
\newcommand{\TheAuthors}{A. Crestetto, C. Klingenberg, and M. Pirner}


\title{{\TheTitle}}

\author{{\TheAuthors}}

\usepackage{amsopn}

\ifpdf
\fi




\begin{document}
\date{}
\maketitle
\footnote{We certify that the general content of the manuscript, in whole or in part, is not submitted, accepted or published elsewhere, including conference proceedings.}
\begin{abstract}
This work is devoted to the numerical simulation of the \textcolor{black}{BGK} equation for two species in the fluid limit using a particle method. Thus, we are interested in a \textcolor{black}{gas mixture consisting of two species without chemical reactions} assuming that the number of particles of each species remains constant.
We consider the kinetic two species model proposed by Klingenberg, Pirner and Puppo in \cite{Pirner}, which separates the intra and interspecies collisions. \textcolor{black}{We want to study numerically the influence of the two relaxation term, one corresponding to intra, the other to interspecies collisions. For this, we use the method of micro-macro decomposition. First, we derive an equivalent model} based on the micro-macro decomposition (see Bennoune, Lemou and Mieussens \cite{MicroMacro2007} and Crestetto, Crouseilles and Lemou \cite{MicroMacro2013}). The kinetic micro part is solved by a particle method, whereas the fluid macro part is discretized by a standard finite volume scheme. Main advantages of this approach are: (i) the noise inherent to the particle method is reduced compared to a standard (without micro-macro decomposition) particle method, (ii) the computational cost of the method is reduced in the fluid limit since a small number of particles is then sufficient.
\end{abstract}

\textbf{Keywords:}
Two species mixture, kinetic model, BGK equation, micro-macro decomposition, particles method.
\\

\textbf{AMS subject classsification:} 
65M75, 82C40, 82D10, 35B40.

\section{Introduction}
We want to model a \textcolor{black}{gas mixture} consisting of two species. The kinetic description of a plasma is based on the \textcolor{black}{BGK} equation. In \cite{MicroMacro2013}, Crestetto, Crouseilles and Lemou developed a numerical simulation of the Vlasov-BGK equation in the fluid limit using particles. They consider a Vlasov-BGK equation for the electrons and treat the ions as a background charge. In \cite{MicroMacro2013} a micro-macro decomposition is used as in \cite{MicroMacro2007} where asymptotic preserving schemes have been derived in the fluid limit. In \cite{MicroMacro2013}, the approach in \cite{MicroMacro2007} is modified by using a particle approximation for the kinetic part, the fluid part being always discretized by standard finite volume schemes. Other approaches where kinetic description of one species is written in a micro-macro decomposition can be seen in \cite{CL,dececco}.\\
In this paper, we want to model \textcolor{black}{two species by a system of two BGK} equations. Such a multi component kinetic description of the gas mixture has for example  importance \textcolor{black}{in modelling applications in air, since air is a gas mixture. We want to consider applications where the gas mixture is close to a fluid in some regions, but the kinetic description is mandatory in some other regions}. For this, we want to use the approach in \cite{MicroMacro2013}, since it has the following advantages: the presented scheme has a much less level of noise compared to the standard particle method and the computational cost of the micro-macro model is reduced in the fluid regime since a small number of particles is needed for the micro part. \\
From the modelling point of view, we want to describe this gas mixture using two distribution functions via the BGK equation with interaction terms on the right-hand side. 
BGK models give rise to efficient numerical computations, see for example \cite{Puppo_2007, Jin_2010,Dimarco_2014, MicroMacro2007, Dimarco, Bernard_2015, MicroMacro2013}. In the literature one can find two types of models for gas mixtures. \textcolor{black}{The Boltzmann equation for gas mixtures contains a sum of collision terms on the right-hand side. One type of BGK model for gas mixtures also has a sum of collision terms in the relaxation operator.} One example is the model of Klingenberg, Pirner and Puppo \cite{Pirner} which we will consider in this paper. It contains the often used models of Gross and Krook   \cite{gross_krook1956} and Hamel \cite{hamel1965} as special cases. 
The other type of model contains only one collision term on the right-hand side. Example of this is the well-known model of Andries, Aoki and Perthame in \cite{AndriesAokiPerthame2002}. \\
In this paper we are interested in the first type of models, and use the model developed in \cite{Pirner}. In this type of model the two different types of interactions, interactions of a species with itself and interactions of a species with the other one, are kept separated. Therefore, we can see how these different types of interactions influence the trend to equilibrium. From the physical point of view, we expect two different types of trends to equilibrium. For example, if the collision frequencies of the particles of each species with itself are larger compared to the collision frequencies related to interspecies collisions, we expect that we first observe that the relaxation of the two distribution functions to its own equilibrium distribution is faster compared to the relaxation towards a common velocity and a common temperature. This effect is clearly seen in the model presented in \cite{Pirner} since the two types of interactions are separated.

The outline of the paper is as follows: In section \ref{sec:model} we present the model for a \textcolor{black}{gas mixture} consisting of two species  and write it in dimensionless form. In section \ref{sec:micromacro} we derive the micro-macro decomposition of the model presented in section \ref{sec:model}. In section \ref{sec:spacehomcase} we prove some convergence rates in the space-homogeneous case of the distribution function to a Maxwellian distribution and of the two velocities and temperatures to a common value which we will verify numerically later on.
In section \ref{sec:numapp}, we briefly present the numerical approximation, based on a particle method for the micro equation and a finite volume scheme for the macro one. In section \ref{sec:numresults}, we present some numerical examples. First, we verify numerically the convergence rates obtained in section \ref{sec:spacehomcase}. Then, in the general case, we are interested in the evolution in time of the system. We consider different possibilities for the values of the collision frequencies. 
 When the collision frequencies are very large we observe relaxations towards Maxwellian distributions. Finally, if we vary the relationships between the different collision frequencies, we observe a corresponding variation in the speed of relaxation towards Maxwellians and the relaxation towards a common value of the mean velocities and temperatures. Finally, section \ref{sec:conclusion} presents a brief conclusion.

\section{The two-species model}\label{sec:model}
In this section we present in 1D the BGK model for a mixture of two species developed in \cite{Pirner} and mention its fundamental properties like the conservation properties. Then, we present its dimensionless form.

\subsection{1D BGK model for a mixture of two species}\label{ssec:model}
We consider a \textcolor{black}{gas mixture consisting of two species} denoted by the index $1$ and 2. Thus, our kinetic model has two distribution functions $f_1(x,v,t)> 0$ and $f_2(x,v,t) > 0$ where $x\in [0,L_x], L_x>0$,  
$v\in \mathbb{R}$ are the phase space variables and $t\geq 0$ the time.   

 Furthermore, for any $f_1,f_2: [0,L_x] \times \mathbb{R} \times \mathbb{R}^+_0 \rightarrow \mathbb{R}^+$ with $(1+|v|^2)f_1,$ \\$(1+|v|^2)f_2 \in L^1(\mathbb{R})$, we relate the distribution functions to  macroscopic quantities by mean-values of $f_k$, $k=1,2$
\begin{align}
\int f_k(v) \begin{pmatrix}
1 \\ v  \\ m_k |v-u_k|^2 
 \end{pmatrix} 
dv =: \begin{pmatrix}
n_k \\ n_k u_k \\   n_k T_k 
\end{pmatrix} , \quad k=1,2,
\label{moments}
\end{align} 
where $m_k$ is the mass,  $n_k$ the number density, $u_k$ the mean velocity and $T_k$ the mean temperature of species $k$, $k=1,2$. Note that in this paper we shall write $T_k$ instead of $k_B T_k$, where $k_B$ is Boltzmann's constant.

We want to model the time evolution of the distribution functions by BGK equations. Each distribution function is determined by one BGK equation to describe its time evolution. The two equations are coupled through a term which describes the interaction of the two species.  We consider binary interactions. So the particles of one species can interact with either themselves or with particles of the other species. In the model this is accounted for introducing two interaction terms in both equations. 
Here, we choose the collision terms as BGK operators, so that the model writes 
\begin{align} \begin{split} \label{BGK}
\partial_t f_1 + v\partial_x  f_1 + \frac{F_1}{m_1} ~\partial_v f_1 &=\nu_{11} n_1 (M_1 - f_1) + \nu_{12} n_2 (M_{12}- f_1), 
\\ 
\partial_t f_2 + v\partial_x   f_2  + \frac{F_2}{m_2} ~\partial_v f_2 &= \nu_{22} n_2 (M_2 - f_2) + \nu_{21} n_1 (M_{21}- f_2),
\end{split}
\end{align}
with the mean-field or external forces $F_1=F_1(x,t)$ and $F_2=F_2(x,t)$ and the Maxwell distributions
\begin{align} 
\begin{split}
M_k(x,v,t) &= \frac{n_k}{\sqrt{2 \pi \frac{T_k}{m_k}} }  \exp({- \frac{|v-u_k|^2}{2 \frac{T_k}{m_k}}}),
\quad k=1,2,
\\
M_{kj}(x,v,t) &= \frac{n_{kj}}{\sqrt{2 \pi \frac{T_{kj}}{m_k}} }  \exp({- \frac{|v-u_{kj}|^2}{2 \frac{T_{kj}}{m_k}}}), \quad k,j=1,2, k \neq j,
\end{split}
\label{BGKmix}
\end{align}
where $\nu_{11} n_1$ and $\nu_{22} n_2$ are the collision frequencies of the particles of each species with itself, while $\nu_{12} n_2$ and $\nu_{21} n_1$ 
are related to interspecies collisions. 
To be flexible in choosing the relationship between the collision frequencies, we now assume the relationship
\begin{equation} 
\nu_{12}=\varepsilon \nu_{21}, \quad 
\nu_{22} = \beta_2 \nu_{21} = \frac{\beta_2}{\varepsilon} \nu_{12}, \quad 0 < \varepsilon \leq 1, ~\beta_1, \beta_2 >0.
\label{coll}
\end{equation}
The restriction on $\varepsilon$ is without loss of generality. If $\varepsilon >1$, exchange the notation $1$ and $2$ and choose $\frac{1}{\varepsilon}.$
 In addition, we take into account an acceleration due to interactions using a  mean-field or a given external forces $F_1, F_2$. \textcolor{black}{In the following we will omit the forces $F_1$ and $F_2$ for simplicity, but the following work can be extended to the equations with forces in a straightforward way.}

The functions $f_k$  are submitted to the following periodic condition
\begin{align*}
f_k (0,v,t) &= f_k (L_x,v,t),  \quad \text{ for every} \quad  v \in \mathbb{R}, t \geq 0, 
\end{align*}
together with an initial condition
$$ f_k (x,v,0) = f_k^0 (x, v), \quad \text{for every} \quad x \in [0, L_x], v \in \mathbb{R}.$$

The Maxwell distributions $M_1$ and $M_2$ in \eqref{BGKmix} have the same moments as $f_1$ and $f_2$ respectively. With this choice, we guarantee the conservation of mass, momentum and energy in interactions of one species with itself (see section 2.2 in \cite{Pirner}).
The remaining parameters $n_{12}, n_{21}, u_{12}, u_{21}, T_{12}$ and $T_{21}$ will be determined using conservation of total momentum and energy, together with some symmetry considerations.

If we assume that 
\begin{align} 
&n_{12}=n_1 \quad \text{and} \quad n_{21}=n_2,  \label{density} \\
&u_{12}= \delta u_1 + (1- \delta) u_2, \quad \delta \in \mathbb{R}, \label{convexvel}\\
&T_{12} =  \alpha T_1 + ( 1 - \alpha) T_2 + \gamma |u_1 - u_2 | ^2,  \quad 0 \leq \alpha \leq 1, \gamma \geq 0, \label{contemp}
\end{align}
we have conservation of the number of particles, of total momentum and total energy provided that
\begin{align}
u_{21}&=u_2 - \frac{m_1}{m_2} \varepsilon (1- \delta ) (u_2 - u_1), \quad \text{and}
\label{veloc} \\
\begin{split}
T_{21} &=\left[  \varepsilon m_1 (1- \delta) \left( \frac{m_1}{m_2} \varepsilon ( \delta - 1) + \delta +1 \right) - \varepsilon \gamma \right] |u_1 - u_2|^2 \\&+ \varepsilon ( 1 - \alpha ) T_1 + ( 1- \varepsilon ( 1 - \alpha)) T_2,
\label{temp}
\end{split}
\end{align}
 see theorem 2.1, theorem 2.2 and theorem 2.3 in \cite{Pirner}. 
 
In order to ensure the positivity of all temperatures, we need to impose restrictions on $\delta$ and $\gamma$ given by
 \begin{align}
&0 \leq \gamma  \leq m_1 (1-\delta) \left[(1 + \frac{m_1}{m_2} \varepsilon ) \delta + 1 - \frac{m_1}{m_2} \varepsilon \right], \quad \text{and}
 \label{gamma} \\
 &\frac{ \frac{m_1}{m_2}\varepsilon - 1}{1+\frac{m_1}{m_2}\varepsilon} \leq  \delta \leq 1,
\label{gammapos}
\end{align}
see theorem 2.5 in \cite{Pirner}.

\subsection{Dimensionless form}

We want to write the BGK model presented in subsection \ref{ssec:model} in dimensionless form \textcolor{black}{in order to do the numerical experiments with dimensionless quantities}. The principle of non-dimensionalization can also be found in chapter 2.2.1 in \cite{Laure} for the Boltzmann equation and in \cite{MHDequations} for macroscopic equations. First, we define dimensionless variables of the time $t \in \mathbb{R}^+_0$, the length $x \in [0,L_x]$, the velocity $v \in \mathbb{R}$, the distribution functions $f_1, f_2$, the number densities $n_1, n_2$, the mean velocities $u_1, u_2$,
 the temperatures $T_1,T_2$ 
 and of the collision frequency \textcolor{black}{per density } $\nu_{12}$. Then, dimensionless variables of the other  collision frequencies $\nu_{11}, \nu_{22},  \nu_{21}$ can be derived by using the relationships \eqref{coll}. We start with choosing typical scales denoted by a bar. 
 $$t'= t/\bar{t},~~~x'=x/\bar{x},~~~v'= v/\bar{v},$$
 $$f_1'(x', v', t')= \frac{ \bar{v}}{\textcolor{black}{\bar{n}_1}} f_1(x,v,t),~~~f_2'(x', v', t')= \frac{ \bar{v}}{\textcolor{black}{\bar{n}_2}} f_2(x,v,t),$$
 where \textcolor{black}{$\bar{n}_1$ is the typical order of magnitude of the density} of species 1 and \textcolor{black}{$\bar{n}_2$ the typical order of magnitude of the density} of the species 2 in the volume $\bar{x}$. 
 Further, we choose
$$n_1'=  n_1/\bar{n}_1,~~~ n_2'= n_2 /\bar{n}_2, $$ 
$$u_1' = u_1/\bar{u}_1, ~~~ u_2'= u_2/\bar{u}_2, ~~~ \bar{u}_2 = \bar{u}_1 = \bar{v},$$
$$T_1' = T_1/\bar{T}_1, ~~~T_2'= T_2/\bar{T}_2, ~~~ \bar{T}_2 = \bar{T}_1 = m_1 \bar{v}^2,$$
$$\nu_{ie}' = \nu_{ie}/\bar{\nu}_{ie}.$$
\textcolor{black}{We want to make the following assumptions on the gas mixture regime. 
\begin{assumption}
We assume
$$ \bar{n}_1=\bar{n}_2=: \bar{n}, \quad \bar{u}_1 = \bar{u}_2 = \bar{v}, \quad \bar{T}:= \bar{T}_2 = \bar{T}_1 = m_1 \bar{v}^2, 
$$
and the assumptions on the collision frequencies \eqref{coll}.
\label{ass}
\end{assumption}}
Now, we want to write equations \eqref{BGK} in dimensionless variables. We start with the Maxwellians \eqref{BGKmix} and with \eqref{convexvel}-\eqref{temp}.
We replace the macroscopic quantities $n_1, u_1$ and $T_1$ in $M_1$  by their dimensionless expressions and obtain 
\begin{align}
M_1= \frac{n'_1 \bar{n}}{ \sqrt{2 \pi \frac{\bar{T}_1 T'_1}{m_1}}} \exp ( - \frac{|v' \bar{v} - u'_1 \bar{u}_1 |^2 m_1}{2 T'_1 \bar{T}_1})
\end{align}
\textcolor{black}{by using the first assumption of assumptions \ref{ass}. By the third assumption of assumptions \ref{ass}}, we obtain
\begin{align}
M_1=\frac{\bar{n}}{\bar{v}} \frac{n'_1}{\sqrt{2\pi T'_1}} \exp( - \frac{|v' - u'_1|^2}{2 T'_1})=: \frac{\bar{n}}{\bar{v}} M'_1.
\label{Mi_dim}
\end{align}
In the Maxwellian $M_2$ \textcolor{black}{we again assume the first and third assumption in assumptions \ref{ass} } and obtain in the same way as for $M_1$
\begin{align}
M_e= \frac{\bar{n}}{\bar{v}} \left( \frac{m_e}{m_i} \right)^{\frac{1}{2}} \frac{n'_e}{\sqrt{2\pi T'_e}} \exp(- \frac{|v'- u'_e|^2}{2T'_e} \frac{m_e}{m_i}) =: \frac{\bar{n}}{\bar{v}} M'_e.
\end{align}
Now, we consider the Maxwellian $M_{12}$ in \eqref{BGKmix}, its velocity $u_{12}$ in  \eqref{convexvel} and its temperature $T_{12}$ in \eqref{contemp}. 
\textcolor{black}{Now, we use the first, second and third assumption of assumptions \ref{ass}} and obtain
\begin{align}
\begin{split}
u_{12}&= \delta u'_1 \bar{u}_1 +(1- \delta) u'_2 \bar{u}_2 = (\delta u'_1 +(1- \delta) u'_2 ) \bar{v} =: \bar{v} u'_{12}, \\
T_{12} &= \alpha T'_1 \bar{T}_1 + (1- \alpha) T'_2 \bar{T}_2 + \gamma |\bar{v}|^2 |u'_1 - u'_2|^2 \\&=  m_1 |\bar{v}|^2 [\alpha T'_1 +(1-\alpha) T'_2+  \frac{\gamma}{m_1} |u'_1 - u'_2|^2 ]=:  |\bar{v}|^2 m_1 T'_{12},\\
M_{12} &= \frac{n'_1 \bar{n}}{\sqrt{2 \pi \bar{v}^2 T'_{12}}} \exp(-\frac{|v'-u'_{12}|^2}{2 T'_{12}}) =: \frac{\bar{n}}{\bar{v}} M'_{12}.
\end{split}
\label{Mie_dim}
\end{align}
With the same assumptions we obtain for $u_{21}$, $T_{21}$ and $M_{21}$ in a similar way the expressions
\begin{align*}
u_{21}&= [( 1- \frac{m_1}{m_2}\varepsilon(1-\delta)) u'_2 + \frac{m_1}{m_2} \varepsilon (1- \delta) u'_1] \bar{v} =: u'_{21} \bar{v}, \\
T_{21} &=[(1- \varepsilon(1- \alpha)) T'_2 + \varepsilon (1- \alpha) T'_1 ] \bar{T} \\&+ ( \varepsilon m_1 (1- \delta) ( \frac{m_1}{m_2} \varepsilon (\delta -1) + \delta +1) - \varepsilon \gamma)|u'_1 -u'_2|^2 |\bar{v}|^2\\ &= [(1- \varepsilon(1- \alpha)) T'_2 + \varepsilon (1- \alpha) T'_1 ]  |\bar{v}|^2 m_2 \frac{m_1}{m_2}\\& + ( \varepsilon m_1 (1- \delta) ( \frac{m_1}{m_2} \varepsilon (\delta -1) + \delta +1) - \varepsilon \gamma)|u'_1 -u'_2|^2 |\bar{v}|^2 =:  |\bar{v}|^2 m_2 \frac{m_1}{m_2} T'_{21}, \\
M_{21}&= \frac{\bar{n}}{\bar{v}} \frac{m_2}{m_1} \frac{n'_2}{\sqrt{2\pi T'_{21}}} \exp(- \frac{|v' - u'_{21}|^2}{2T'_{21}} \frac{m_2}{m_1})=: \frac{\bar{n}}{\bar{v}} M'_{21}.
\end{align*}
Now we replace all quantities in \eqref{BGK} by their non-dimensionalized expressions. For the left-hand side of the equation for the species 1 we obtain
\begin{align}
\begin{split}
\partial_t f_1 + v \partial_x f_1 
= \frac{1}{\bar{t}}\frac{\textcolor{black}{\bar{n}}}{ \bar{v}}\partial_{t'} f'_1 + \frac{1}{\bar{x}}\frac{\textcolor{black}{\bar{n}}}{ \bar{v}} \bar{v} v' \partial_{x'}  f'_1
\end{split}
\label{left}
\end{align}
and for the right-hand side using  \eqref{coll}, \eqref{Mi_dim} and \eqref{Mie_dim}, we get
\begin{align}
\begin{split}
\nu_{11} n_1 &(M_1 - f_1) + \nu_{12}  n_2 (M_{12}- f_1) = \nu_{12} \beta_1 n_1 (M_1 - f_1) + \nu_{12} n_2 (M_{12} - f_1)\\&=\beta_1 \bar{\nu}_{12}\frac{\textcolor{black}{\bar{n}^2}}{ \bar{v}}  \nu'_{12} n'_1 (M'_1 - f'_1) + \bar{\nu}_{12}\frac{\textcolor{black}{\bar{n}^2}}{ \bar{v}}\nu'_{12}  n'_2 (M'_{12}- f'_1) .
\end{split}
\label{right}
\end{align}
Multiplying by $\frac{\bar{t}\bar{v}}{\textcolor{black}{\bar{n}}}$ and dropping the primes in the variables leads to
\begin{align*}
\partial_{t} f_1 +\frac{\bar{t}\bar{v}}{\bar{x}}  v \partial_{x}  f_1
= \beta_1 \bar{\nu}_{12}\bar{t}~\textcolor{black}{\bar{n}} ~\nu_{12} n_1 (M_1 - f_1) + \bar{\nu}_{12}\bar{t}~\textcolor{black}{\bar{n}}~\nu_{12}  n_2 (M_{12}- f_1).
\end{align*}
In a similar way we obtain for the second species
\begin{align*}
\partial_{t} f_2 &+ \frac{\bar{t}\bar{v}}{\bar{x}} v \partial_{x}  f_2 
= \textcolor{black}{\frac{\beta_2}{\varepsilon}}\bar{\nu}_{12}\bar{t}~\textcolor{black}{\bar{n}}~ \nu_{12} n_2 \left( M_2 - f_2\right) + \frac{1}{\varepsilon}\bar{\nu}_{12}\bar{t}~\textcolor{black}{\bar{n}}~\nu_{12}  n_1 \left(  M_{21}- f_2 \right),
\end{align*}
and the non-dimensionalized Maxwellians given by 
\begin{align} 
\begin{split}
M_1(x,v,t) &= \frac{n_1}{\sqrt{2 \pi T_1}} \exp({- \frac{|v-u_1|^2}{2 T_1}}),
\\
M_2(x,v,t) &= \frac{n_2}{\sqrt{2 \pi T_2}} \left( \frac{m_2}{m_1} \right) ^{\frac{1}{2}} \exp({- \frac{|v-u_2|^2}{2 T_2}} \frac{m_2}{m_1}),
\\
M_{12}(x,v,t) &= \frac{n_{1}}{\sqrt{2 \pi T_{12}}} \exp({- \frac{|v-u_{12}|^2}{2 T_{12}}}),
\\
M_{21}(x,v,t) &= \frac{n_{2}}{\sqrt{2 \pi T_{21}}} \left( \frac{m_2}{m_1} \right) ^{\frac{1}{2}} \exp({- \frac{|v-u_{21}|^2}{2 T_{21}}} \frac{m_2}{m_1}),
\end{split}
\label{Max_dim}
\end{align}
with the non-dimensionalized macroscopic quantities 
\begin{align}
u_{12}&=  \delta u_1 +(1- \delta) u_2, \label{convexveldim}   \\
T_{12} &= \alpha T_1 +(1-\alpha) T_2+  \frac{\gamma}{m_1} |u_1 - u_2|^2, \label{contempdim}\\
u_{21}&= ( 1- \frac{m_1}{m_2}\varepsilon(1-\delta)) u_2 + \frac{m_1}{m_2} \varepsilon (1- \delta) u_1, \label{velocdim} \\
\begin{split}
T_{21} &= [(1- \varepsilon(1- \alpha)) T_2 + \varepsilon (1- \alpha) T_1 ]\\& + ( \varepsilon  (1- \delta) ( \frac{m_1}{m_2} \varepsilon (\delta -1) + \delta +1) - \varepsilon \frac{\gamma}{m_1})|u_1 -u_2|^2. \label{tempdim}
\end{split}
\end{align}
\\
Defining dimensionless parameters 
 \begin{align}
 \begin{split}
  A = \frac{\bar{t}\bar{v}}{\bar{x}}, \quad
 &\frac{1}{\varepsilon_1}=\beta_1 \bar{\nu}_{12}\bar{t}~\textcolor{black}{\bar{n}}, \quad \frac{1}{\tilde{\varepsilon}_1}= \bar{\nu}_{12}\bar{t}~\textcolor{black}{\bar{n}}, \quad  \frac{1}{\varepsilon_2}= \frac{\beta_2}{\varepsilon}\bar{\nu}_{12}\bar{t}~\textcolor{black}{\bar{n}}, \quad \frac{1}{\tilde{\varepsilon}_2}= \frac{1}{\varepsilon}\bar{\nu}_{12}\bar{t}~\textcolor{black}{\bar{n}},
 \end{split}
 \label{eps}
 \end{align}
 we get
 \begin{align} 
\begin{split}
\partial_t f_1 + A~ v\partial_x  f_1
= \frac{1}{\varepsilon_1}  \nu_{12} n_1 (M_1 - f_1) + \frac{1}{\tilde{\varepsilon}_1} \nu_{12} n_2 (  M_{12}- f_1),
\\
\partial_t f_2 + A~ v\partial_x  f_2
= \frac{1}{\varepsilon_2} \nu_{12} n_2 (M_2 - f_2) + \frac{1}{\tilde{\varepsilon}_2} \nu_{12} n_1 ( M_{21}- f_2).
\end{split}
\label{nondimBGK}
\end{align}
\textcolor{black}{In the sequel, parameters $\varepsilon_1$, $\varepsilon_2$, $\tilde{\varepsilon}_1$ and $\tilde{\varepsilon}_2$ are referred to as Knudsen numbers.} In addition, we want to write the moments \eqref{moments} in non-dimensionalized form. We can compute this in a similar way as for \eqref{BGK} and obtain after dropping the primes 
\begin{align}
\begin{split}
&\int f_k dv = n_k, \quad  \int v f_k dv = n_k u_k, \quad k=1,2, \\ \frac{1}{n_1} &\int |v-u_1|^2 f_1 dv = T_1, \quad \frac{m_2}{m_1} \frac{1}{n_2} \int |v-u_2|^2 f_2 dv = T_2.
\end{split}
\label{momentsdim}
\end{align}


\section{Micro-Macro decomposition}\label{sec:micromacro}

In this section, we derive the micro-macro model equivalent to \eqref{nondimBGK}.

First, we take the dimensionless equations \eqref{nondimBGK} and choose $A=1$.
 The choice $A=1$ means $\bar{v}= \frac{\bar{x}}{\bar{t}}$. 

Now, we propose to adapt the micro-macro decomposition presented in \cite{MicroMacro2007} and \cite{MicroMacro2013}. It is used for numerical methods to solve Boltzmann-like equations for mixtures to capture the right compressible Navier-Stokes dynamics at small Knudsen numbers. The idea is to write each distribution function as the sum of its own equilibrium part (verifying a fluid equation) and a rest (of kinetic-type). 
So, we decompose $f_1$ and $f_2$ as 
\begin{align}
f_1 = M_1 + g_{11}, \quad f_2 = M_2 + g_{22}.
\label{eq:mi_ma_decomp}
\end{align}

Let us introduce $m(v):=\begin{pmatrix} 1 \\ v \\ |v|^2\end{pmatrix}$ and the notation $\langle\cdot\rangle:=\int \cdot ~dv$.
Since $f_1$ and $M_1$ (resp. $f_2$ and $M_2$) have the same moments: $\langle m(v)f_1\rangle=\langle m(v)M_1\rangle$ (resp. $\langle m(v)f_2\rangle=\langle m(v)M_2\rangle$), then the moments of $g_{11}$ (resp. $g_{22}$) are zero:
\begin{align}
\int m(v) g_{11} dv = \int m(v) g_{22} dv = 0.
\label{eq:moments_of_g}
\end{align}

With this decomposition we get from equation \eqref{nondimBGK} of species 1 in dimensionless form
\begin{align}
\begin{split}
\partial_t M_1 + \partial_t g_{11} + v\partial_x   M_1 + v\partial_x   g_{11} 
=- \frac{1}{\varepsilon_1} \nu_{12} n_1  g_{11} + \frac{1}{\tilde{\varepsilon_1}} \nu_{12} n_2( M_{12} - M_1 -g_{11}),
\label{decomp}
\end{split}
\end{align}
and a similar equation for species 2.

Now we consider the Hilbert spaces $L^2_{M_k}= \lbrace \phi$ such that $\phi M_k^{-\frac{1}{2}}\in L^2(\mathbb{R})\rbrace$, $k=1,2$,  with the weighted inner product $\langle \phi \psi M_k^{-1} \rangle$. We consider the subspace $\mathcal{N}_k=$span $\lbrace  M_k , v M_k , |v|^2 M_k \rbrace$, $k=1,2$.  
Let $\Pi_{M_k}$ the orthogonal projection in $L^2_{M_k}$ on this subspace $\mathcal{N}_k$. This subspace has the orthonormal basis 
\begin{align*}
\tilde{B}_k = \lbrace \frac{1}{\sqrt{n_k}} M_k , \frac{(v-u_k)}{\sqrt{T_km_1/m_k}} \frac{1}{\sqrt{n_k}} M_k , ( \frac{|v-u_k|^2}{2T_km_1/m_k} - \frac{1}{2}) \frac{1}{\sqrt{n_k}} M_k \rbrace =: \lbrace b_1^k, b_2^k, b_3^k \rbrace.
\end{align*}
Using this orthonormal basis of $\mathcal{N}_k$, one finds for any function $\phi \in L^2_{M_k}$ the following expression of $\Pi_{M_k}(\phi)$
\begin{align}
\Pi_{M_k} ( \phi ) = \sum_{n=1}^3 (\phi , b_n^k ) b_n^k &= \frac{1}{n_k} [ \langle \phi \rangle + \frac{(v-u_k)\cdot \langle (v-u_k) \phi\rangle }{T_km_1/m_k}\nonumber\\ &+( \frac{|v-u_k|^2}{2T_km_1/m_k} - \frac{1}{2}) 2\langle ( \frac{|v-u_k|^2}{2T_km_1/m_k} - \frac{1}{2}) \phi \rangle ] M_k.
\label{explicit}
\end{align}
This orthogonal projection $\Pi_{M_k}(\phi)$ has some elementary properties.
\begin{lemma}[Properties of $\Pi_{M_k}$] We have, for $k=1,2$,
 \begin{align*} 
 &(\mathds{1}- \Pi_{M_k})(M_k)=(\mathds{1}- \Pi_{M_k})(\partial_t M_k) =0,\\& \Pi_{M_k}(g_{kk})= \Pi_{M_k}(\partial_t g_{kk})
 =0,
 \end{align*}
and 
\begin{align}
\Pi_{M_1} ( M_{12} ) = (1 &+ \frac{(v-u_1) (u_{12} -u_1)}{T_1} \nonumber\\&+ (\frac{|v-u_1|^2}{2 T_1} - \frac{1}{2})( \frac{T_{12}}{ T_1} + \frac{|u_{12} - u_1 |^2}{ T_1}  -1))M_1,
\label{formPi}\\
\Pi_{M_2} ( M_{21} ) = (1 &+ \frac{(v-u_2) (u_{21} -u_2)}{T_2 m_1/m_2} \nonumber\\&+ (\frac{|v-u_2|^2}{2 T_2 m_1/m_2} - \frac{1}{2})( \frac{T_{21}}{ T_2} + \frac{|u_{21} - u_2 |^2}{ T_2 m_1/m_2}  -1))M_2.
\label{formPe}
\end{align}
\label{lem:formPi}
\end{lemma}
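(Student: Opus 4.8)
The whole statement reduces to the explicit projection formula \eqref{explicit}, so the plan is to feed each of the six claims into that formula and reduce everything to elementary Gaussian moment computations. The first four identities express that certain functions either lie in $\mathcal{N}_k$ (so the projector acts as the identity) or have vanishing first three moments (so every bracket in \eqref{explicit} is zero); only the last two require actually computing the coefficients.

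For $(\mathds{1}-\Pi_{M_k})(M_k)=0$ I would simply observe that $M_k\in\mathcal{N}_k=\mathrm{span}\{M_k,vM_k,|v|^2M_k\}$ by definition, so $\Pi_{M_k}(M_k)=M_k$; equivalently one checks from \eqref{explicit} that $\langle M_k\rangle=n_k$, $\langle(v-u_k)M_k\rangle=0$ and $\langle(\frac{|v-u_k|^2}{2T_km_1/m_k}-\frac12)M_k\rangle=0$, the last using $\langle|v-u_k|^2M_k\rangle=n_kT_km_1/m_k$ from \eqref{momentsdim}. For $\partial_tM_k$ I would differentiate the Gaussian in $t$ through its parameters $n_k,u_k,T_k$: each of $\partial_{n_k}M_k,\partial_{u_k}M_k,\partial_{T_k}M_k$ is a polynomial of degree $\le 2$ in $v$ times $M_k$, hence lies in $\mathcal{N}_k$, so the $v$-independent factors $\partial_tn_k,\partial_tu_k,\partial_tT_k$ combine them into an element of $\mathcal{N}_k$ and $\Pi_{M_k}(\partial_tM_k)=\partial_tM_k$. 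For $\Pi_{M_k}(g_{kk})=0$ I would use that \eqref{eq:moments_of_g} makes $\langle g_{kk}\rangle,\langle vg_{kk}\rangle,\langle|v|^2g_{kk}\rangle$ all vanish; since $\langle(v-u_k)g_{kk}\rangle$ and $\langle|v-u_k|^2g_{kk}\rangle$ are linear combinations of these, every coefficient in \eqref{explicit} is zero. Finally $\Pi_{M_k}(\partial_tg_{kk})=0$ follows by differentiating \eqref{eq:moments_of_g} in $t$ and pulling $\partial_t$ through the $v$-integral (the weights $1,v,|v|^2$ being $t$-independent), which shows $\partial_tg_{kk}$ has the same vanishing moments, and then reusing the previous argument.

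For the two explicit formulas I would apply \eqref{explicit} with $\phi=M_{12}$ (resp.\ $M_{21}$) and compute the three required moments. The key point is that $M_{12}$ is itself a Maxwellian of density $n_1$, mean velocity $u_{12}$ and variance $T_{12}$, so $\langle M_{12}\rangle=n_1$, $\langle(v-u_1)M_{12}\rangle=n_1(u_{12}-u_1)$, and for the energy moment I would expand $|v-u_1|^2=|v-u_{12}|^2+2(v-u_{12})(u_{12}-u_1)+|u_{12}-u_1|^2$; the middle term integrates to zero because $M_{12}$ is centered at $u_{12}$, leaving $\langle|v-u_1|^2M_{12}\rangle=n_1(T_{12}+|u_{12}-u_1|^2)$. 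Substituting these into \eqref{explicit} yields \eqref{formPi} directly. The computation for $\Pi_{M_2}(M_{21})$ is identical in structure, the only bookkeeping subtlety being that the variances of $M_2$ and of $M_{21}$ carry the mass ratio, i.e.\ $\langle|v-u_2|^2M_2\rangle=n_2T_2m_1/m_2$ and $\langle|v-u_{21}|^2M_{21}\rangle=n_2T_{21}m_1/m_2$, so that the ratio $\frac{T_{21}m_1/m_2}{T_2m_1/m_2}$ collapses to $T_{21}/T_2$ exactly as in \eqref{formPe}.

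The main obstacle is one of bookkeeping rather than of ideas: I must keep straight the two distinct ``temperatures'' appearing in each projection — the variance $T_km_1/m_k$ of the Gaussian $M_k$ defining the projector, versus the variance $T_{kj}m_1/m_k$ and mean $u_{kj}$ of the function $M_{kj}$ being projected — and carry the factor $m_1/m_k$ consistently through the shift identity for $|v-u_k|^2$. Once that expansion is set up correctly, the centering of $M_{kj}$ kills the cross term and the surviving Gaussian integrals are standard, so no genuine difficulty remains.
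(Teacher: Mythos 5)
Your proposal is correct and takes essentially the same route as the paper: the paper disposes of the first identities by citing the one-species argument of \cite{MicroMacro2007} (membership in $\mathcal{N}_k$ for $M_k$ and $\partial_t M_k$, vanishing moments \eqref{eq:moments_of_g} for $g_{kk}$ and $\partial_t g_{kk}$), and obtains \eqref{formPi}--\eqref{formPe} by direct computation from the explicit formula \eqref{explicit}, exactly as you do. Your write-up simply fills in the Gaussian moment calculations and the $m_1/m_k$ bookkeeping that the paper leaves implicit as ``direct computations'', and all of these details check out.
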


\begin{proof}
The proof of the first five equalities is analogue to the one species case and is given in \cite{MicroMacro2007}. 
Besides, using the explicit expression of $\Pi_{M_k}$, $k=1,2$, given by \eqref{explicit} we obtain \eqref{formPi}-\eqref{formPe} by direct computations. 
\end{proof}

Now we apply the orthogonal projection $\mathds{1}- \Pi_{M_1}$ to (\ref{decomp}), use lemma \ref{lem:formPi}  
and obtain 
\begin{align*}
 \partial_t g_{11} &+ (\mathds{1}- \Pi_{M_1})(v\partial_x  M_1) + (\mathds{1}- \Pi_{M_1})(v\partial_x  g_{11} ) 
 \\&=\frac{1}{\tilde{\varepsilon}_i} \nu_{12} n_2 ( M_{12} - \Pi_{M_1}(M_{12}) ) - ( \frac{1}{\varepsilon_1} \nu_{12} n_1  + \frac{1}{\tilde{\varepsilon}_1} \nu_{12} n_2 ) g_{11}.
\end{align*}
Again with lemma \ref{lem:formPi}  we replace $\Pi_{M_1} (M_{12})$ by its explicit expression 
\begin{align}
\begin{split}
\partial_t g_{11} &+ (\mathds{1}- \Pi_{M_1})(v\partial_x  M_1) + (\mathds{1}- \Pi_{M_1})(v\partial_x  g_{11} )
\\&= \frac{1}{\tilde{\varepsilon}_1} \nu_{12} n_2 ( M_{12} - (1 + \frac{(v-u_1) (u_{12} -u_1)}{T_1} + (\frac{|v-u_1|^2}{2 T_1 } - \frac{1}{2})( \frac{T_{12}}{ T_1} \\&+ \frac{1}{ T_1} |u_{12} - u_1 |^2 -1))M_1 ) - ( \frac{1}{\varepsilon_1} \nu_{12} n_1 + \frac{1}{\tilde{\varepsilon}_1} \nu_{12} n_2 ) g_{11}.
 \end{split}
 \label{micro1}
\end{align}
 We take the moments of equation (\ref{decomp}), 
use \eqref{eq:moments_of_g}, and we get 
\begin{align}
\begin{split}
\partial_t \langle m(v) M_1 \rangle +  \partial_x  \langle m(v) v M_1 \rangle + \partial_x  \langle m(v) v g_{11} \rangle 
= \frac{1}{\tilde{\varepsilon}_1} \nu_{12} n_2 (  \langle m(v) (M_{12}- M_1)\rangle).
\end{split}
\label{macro1}
\end{align}
%

In a similar way, we get an analogous coupled system for species 2 which is coupled with the system of the ions 
\begin{align}
\begin{split}
 \partial_t  g_{22} & + (\mathds{1}- \Pi_{M_2})(v\partial_x  M_2) + (\mathds{1}- \Pi_{M_2})(v\partial_x   g_{22} ) 
 \\&= \frac{1}{\tilde{\varepsilon}_2} \nu_{12} n_1   ( M_{21} - (1 + \frac{(v-u_2) (u_{21} -u_2)}{T_2} \frac{m_2}{m_1} \\&+ (\frac{|v-u_2|^2}{2 T_2 }\frac{m_2}{m_1} - \frac{1}{2})( \frac{T_{21}}{ T_2} + \frac{m_2}{m_1 T_2} |u_{21} - u_2 |^2 -1))M_2 ) \\&- ( \frac{1}{\varepsilon_2} \nu_{12} n_2  + \frac{1}{\tilde{\varepsilon}_2} \nu_{12} n_1) g_{22},
 \end{split}
 \label{micro2}\\
\begin{split}
\partial_t \langle m M_2 \rangle &+  \partial_x  \langle m (v M_2) \rangle + \partial_x  \langle m (v g_{22} )\rangle 
= \frac{1}{\tilde{\varepsilon}_2} \nu_{12} n_1 ( \langle m (M_{21}- M_2)\rangle).
\end{split}
\label{macro2}
\end{align}
Now we have obtained a system of two microscopic equations \eqref{micro1}, \eqref{micro2} and two macroscopic equations \eqref{macro1}, \eqref{macro2}. One can show that this system is an equivalent formulation of the BGK equations for species 1 and species 2. This is analogous to what is done in \cite{MicroMacro2013}. 

  \section{\textcolor{black}{Space-homogeneous case}}\label{sec:spacehomcase}
  In this section, we consider our model \textcolor{green}{(\ref{nondimBGK})} in the space-homogeneous case, where we can prove an estimation of the decay rate of $|| f_k(t) - M_k(t)||_{L^1(dv)}$, $|u_1(t) - u_2(t)|^2$ and $|T_1(t)-T_2(t)|^2.$
  
In the space-homogeneous case, the BGK model for mixtures \eqref{BGK} simplifies to
\begin{align}
\begin{split}
\partial_t f_1  &= \frac{1}{\varepsilon_1} \nu_{12} n_1 (M_1 - f_1) + \frac{1}{\tilde{\varepsilon}_1}\nu_{12} n_2 (M_{12} - f_1), \\
\partial_t f_2  &= \frac{1}{\varepsilon_2} \nu_{12} n_2 (M_2 - f_2) + \frac{1}{\tilde{\varepsilon}_2} \nu_{12} n_2 (M_{21} - f_2),
\end{split}
\label{BGKconv0}
\end{align}
and we let the reader adapt the micro-macro decomposition \eqref{micro1}-\eqref{macro1}-\eqref{micro2}-\eqref{macro2} to this case.

 \subsection{Decay rate for the BGK model for mixtures in the space-homo\-ge\-neous case}
 We denote by $H(f)= \int f \ln f dv$ the entropy of a function $f$ and by $H(f|g)= \int f \ln \frac{f}{g} dv$ the relative entropy of $f$ and $g$.
 
\begin{theorem}
In the space homogeneous case 
we have the following decay rate of the distribution functions $f_1$ and $f_2$
$$ || f_k - M_k ||_{L^1(dv)} \leq 4 e^{- \frac{1}{2} Ct } [ H(f_1^0|M_1^0) + H(f_2^0 | M_2^0)]^{\frac{1}{2}}, \quad k=1,2,$$
where $C$ is a constant.
\end{theorem}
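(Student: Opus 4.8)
The plan is to run the classical relative-entropy (H-theorem) argument and then convert the entropy decay into an $L^1$ decay via the Csisz\'ar--Kullback--Pinsker inequality. Since the Maxwellian $M_k$ shares its moments with $f_k$, one has the equal-mass Csisz\'ar--Kullback--Pinsker bound $\|f_k-M_k\|_{L^1(dv)}^2 \le 2 n_k\, H(f_k|M_k)$, so it suffices to prove that the relative entropies $H(f_k|M_k)$ decay exponentially. Taking the square root then produces the exponential rate $-\frac{1}{2}Ct$, and bounding $H(f_k|M_k)$ by the initial data yields the constant prefactor in the statement.

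First I would establish the basic dissipation identity. Because $\ln M_k$ is an affine combination of the collision invariants $1,v,|v|^2$ and $f_k,M_k$ have equal moments, the contribution of $\partial_t M_k$ cancels, giving the clean formula $\frac{d}{dt}H(f_k|M_k)=\langle \partial_t f_k\,\ln(f_k/M_k)\rangle$. Inserting the space-homogeneous equations \eqref{BGKconv0}, the intra-species relaxation produces $-\frac{1}{\varepsilon_k}\nu_{12}n_k\langle (f_k-M_k)\ln(f_k/M_k)\rangle$, which by the elementary convexity inequality $(a-b)(\ln a-\ln b)\ge 0$ equals $-\frac{1}{\varepsilon_k}\nu_{12}n_k[H(f_k|M_k)+H(M_k|f_k)]\le -\frac{1}{\varepsilon_k}\nu_{12}n_k\,H(f_k|M_k)$. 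This is the engine driving each $f_k$ towards its own Maxwellian.

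The \emph{crux} is the inter-species term, e.g. $\frac{1}{\tilde\varepsilon_1}\nu_{12}n_2\langle(M_{12}-f_1)\ln(f_1/M_1)\rangle$, which has no definite sign. I would split $\ln(f_1/M_1)=\ln(f_1/M_{12})+\ln(M_{12}/M_1)$. The first piece yields another nonpositive relative-entropy contribution $-\frac{1}{\tilde\varepsilon_1}\nu_{12}n_2[H(f_1|M_{12})+H(M_{12}|f_1)]$, while the second piece, using once more that $\ln(M_{12}/M_1)$ is quadratic in $v$ and that $f_1,M_1$ share moments, reduces exactly to the source $\frac{1}{\tilde\varepsilon_1}\nu_{12}n_2[H(M_{12}|M_1)+H(M_1|M_{12})]$. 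This source measures the mismatch between $M_{12}$ and $M_1$ and, via \eqref{convexveldim}--\eqref{contempdim}, is controlled by $|u_1-u_2|^2$ and $|T_1-T_2|$. Hence $H(f_1|M_1)$ does not satisfy a closed decay inequality on its own, and controlling this source is the main obstacle.

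To close the argument I would treat the macroscopic differences separately: taking the $v$- and $|v|^2$-moments of \eqref{BGKconv0} and using \eqref{convexveldim}--\eqref{tempdim} gives a closed linear system for $u_1-u_2$ and $T_1-T_2$ whose matrix is, by the positivity constraints \eqref{gamma}--\eqref{gammapos}, dissipative, so that $|u_1-u_2|^2$ and $|T_1-T_2|^2$ decay exponentially. Feeding this exponentially small source back into the entropy identity, summing over $k$, and applying a Gr\"onwall/Duhamel estimate yields $\sum_k H(f_k|M_k)(t)\le C' e^{-Ct}$, with $C$ the minimum of the intra-species and macroscopic relaxation rates; a final application of Csisz\'ar--Kullback--Pinsker species by species gives the claimed bound. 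The points requiring care are a lower bound on the densities $n_k$ (to get a uniform intra-species rate) and the explicit estimate of $H(M_{kj}|M_k)+H(M_k|M_{kj})$ in terms of $|u_1-u_2|^2$ and $|T_1-T_2|^2$.
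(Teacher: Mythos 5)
Your proposal is sound in its mechanics but takes a genuinely different route from the paper. The paper never isolates the cross source $H(M_{12}|M_1)+H(M_1|M_{12})$: it works with the entropy productions $D_1,D_2$, bounds them from below via convexity of $x\ln x$, and then disposes of the $H(M_{12})$, $H(M_{21})$ contributions by invoking the H-theorem inequality of the underlying mixture model (theorem 2.7 of \cite{Pirner}, quoted as \eqref{eq:theo2.7}); summing over both species yields the closed Gronwall inequality \eqref{prod} with the explicit rate $C=\min\{\frac{1}{\varepsilon_1}\nu_{12}n_1+\frac{1}{\tilde{\varepsilon}_1}\nu_{12}n_2,\,\frac{1}{\varepsilon_2}\nu_{12}n_2+\frac{1}{\tilde{\varepsilon}_2}\nu_{12}n_1\}$ --- no macroscopic input, no Duhamel, no temperature bounds. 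Your route instead keeps the per-species bookkeeping exact: the splitting $\ln(f_1/M_1)=\ln(f_1/M_{12})+\ln(M_{12}/M_1)$, the fact that $\ln(M_{12}/M_1)$ is quadratic in $v$ so that $\langle f_1\ln(M_{12}/M_1)\rangle=\langle M_1\ln(M_{12}/M_1)\rangle$, and the resulting source $\frac{1}{\tilde{\varepsilon}_1}\nu_{12}n_2\,[H(M_{12}|M_1)+H(M_1|M_{12})]$ are all correct, and closing via theorems \ref{th:estimate_vel}--\ref{th:estimate_temp} plus Duhamel does give exponential decay. The price is extra hypotheses: $\delta<1$ and $\alpha<1$ strictly (the constraints \eqref{gamma}--\eqref{gammapos} allow the degenerate endpoints, where the macroscopic rates $C_1,C_3$ vanish and a short case analysis is needed, though there the source degenerates too), time-uniform upper and lower bounds on $T_1,T_2$ to convert the Gaussian relative entropies into multiples of $|u_1-u_2|^2$ and $|T_1-T_2|^2$, and a final rate that is only the minimum of the entropic and macroscopic rates. (The lower bound on the $n_k$ you worry about is vacuous here: in the space-homogeneous case the densities are constant in time.)

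One point deserves emphasis, because it is where your proposal and the stated theorem part ways. Your Duhamel step necessarily produces a prefactor containing $|u_1(0)-u_2(0)|^2$ and $|T_1(0)-T_2(0)|^2$, and these are \emph{not} controlled by $H(f_1^0|M_1^0)+H(f_2^0|M_2^0)$: if both initial data are Maxwellian with $u_1(0)\neq u_2(0)$, the stated right-hand side vanishes identically, yet at $t=0$ one has $\partial_t f_1=\frac{1}{\tilde{\varepsilon}_1}\nu_{12}n_2(M_{12}-M_1)$, which is not of the form $M_1$ times a quadratic polynomial, so $f_1$ leaves the Maxwellian manifold instantly and $H(f_1|M_1)$ becomes positive. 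Hence you cannot recover the literal prefactor $4[H(f_1^0|M_1^0)+H(f_2^0|M_2^0)]^{1/2}$ by your route, and you should not try: your weaker-looking conclusion is the defensible one. The discrepancy traces back to the step in the paper where the terms with $\ln M_k$ are said to vanish after inserting \eqref{BGKconv0} into \eqref{eq:derentr}; in fact $\int(M_{12}-f_1)\ln M_1\,dv\neq 0$ in general (the first moments of $M_{12}$ and $f_1$ differ), and summing the two species does not produce a cancellation from total momentum and energy conservation, because $\ln M_1$ and $\ln M_2$ are \emph{different} linear combinations of $1,v,|v|^2$. The quantity discarded there is exactly the source your computation retains, so your accounting identifies precisely what the paper's shorter argument sweeps away.
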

\begin{proof}
We consider the entropy production of species $1$ defined by
\begin{align*}
D_1(f_1, f_2) = - \int \frac{1}{\varepsilon_1} \nu_{12} n_1 \ln f_1 (M_1 - f_1) dv - \int \frac{1}{\tilde{\varepsilon}_1} \nu_{12} n_2 \ln f_1 (M_{12} - f_1) dv.
\end{align*}
Define $\phi:\mathbb{R}^+ \rightarrow \mathbb{R}, \phi(x) := x \ln x$. Then $\phi'(x)= \ln x + 1$, so we can deduce 
\begin{align*}
D_1(f_1, f_2) = - \int \frac{1}{\varepsilon_1} \nu_{12} n_1 \phi'(f_1) (M_1 - f_1) dv - \int \frac{1}{\tilde{\varepsilon}_1}\nu_{12} n_2 \phi'(f_1) (M_{12} - f_1) dv,
\end{align*}
since $\int (f_1-M_1)dv=\int (f_1-M_{12})dv=0$. Moreover, we have $\phi''(x) = \frac{1}{x}$. So $\phi$ is convex and we obtain
\begin{align}
\begin{split}
D_1(f_1, f_2) & \geq \int \frac{1}{\varepsilon_1} \nu_{12} n_1 (\phi(f_1) - \phi(M_1)) dv + \int \frac{1}{\tilde{\varepsilon}_1} \nu_{12} n_2 ( \phi(f_1) - \phi(M_{12})) dv \\&= 
 \frac{1}{\varepsilon_1} \nu_{12} n_1 (H(f_1) - H(M_1)) + \frac{1}{\tilde{\varepsilon}_1} \nu_{12} n_2 ( H(f_1) - H(M_{12})).
 \end{split}
 \label{eq:D}
\end{align}
In the same way we get a similar expression for $D_2(f_2,f_1)$  just exchanging the indices $1$ and $2$. \\ 
If we use that $\ln M_1$ is a linear combination of $1,v$ and $|v|^2$, we see that $\int (M_1-f_1) \ln M_1 dv=0$ since $f_1$ and $M_1$ have the same moments. With this we can compute that 
\begin{align}
H(f_1 | M_1) = H(f_1)- H(M_1).
\label{eq:relentr}
\end{align}
Moreover in the proof of theorem 2.7 in \cite{Pirner}, we see that
\begin{align}
 \frac{1}{\tilde{\varepsilon}_1} \nu_{12} n_2 H(M_{12}) + \frac{1}{\tilde{\varepsilon}_2}\nu_{12} n_1 H(M_{21}) \leq \frac{1}{\tilde{\varepsilon}_1}\nu_{12} n_2 H(M_1) + \frac{1}{\tilde{\varepsilon}_2}\nu_{12} n_1 H(M_2)
 \label{eq:theo2.7}.
\end{align}
With \eqref{eq:relentr} and \eqref{eq:theo2.7}, we can deduce from \eqref{eq:D} that
\begin{align}
\begin{split}
D_1(f_1,f_2)+ D_2(f_2,f_1) \geq (\frac{1}{\varepsilon_1}\nu_{12} n_1 + \frac{1}{\tilde{\varepsilon}_1}\nu_{12} n_2) H(f_1|M_1)\\ + (\frac{1}{\varepsilon_2}\nu_{12} n_2 + \frac{1}{\tilde{\varepsilon}_2}\nu_{12} n_1) H(f_2 |M_2).
\end{split}
\label{prod}
\end{align}
We want to relate the time derivative of the relative entropies
\begin{align*}
\frac{d}{dt} (H(f_1|M_1) + H(f_2|M_2)) = \frac{d}{dt} [\int f_1 \ln \frac{f_1}{M_1} dv + \int f_2 \ln \frac{f_2}{M_2} dv]
\end{align*}
 to the entropy production in the following. First we use product rule and obtain
\begin{align}
\begin{split}
\frac{d}{dt} (H(f_1|M_1) + H(f_2|M_2)) &= \int \partial_t f_1 (\ln \frac{f_1}{M_1}+1) dv - \int \frac{f_1}{M_1} \partial_t M_1 dv \\ &+ \int \partial_t f_2 (\ln \frac{f_2}{M_2}+1) dv - \int \frac{f_2}{M_2} \partial_t M_2 dv.
\end{split}
\label{eq:derentr}
\end{align}
By using the explicit expression of $\partial_t M_1$, we can compute that $\int f_k \frac{\partial_t M_k}{M_k} dv =\partial_t n_k =0, k=1,2,$ since $n_k$ is constant in the space-homogeneous case. In the first term on the right-hand side of \eqref{eq:derentr}, we insert $\partial_t f_1$ and $\partial_t f_2$ from equation \eqref{BGKconv0} and obtain 
\begin{align*}
\frac{d}{dt} (H(f_1|M_1) + H(f_2|M_2))&=\int ( \frac{1}{\varepsilon_1} \nu_{12} n_1 (M_1 - f_1) + \frac{1}{\tilde{\varepsilon}_1}\nu_{12} n_2 (M_{12} - f_1)) \ln f_1 dv \\&+ \int (\frac{1}{\varepsilon_2} \nu_{12} n_2 (M_2 - f_2) + \frac{1}{\tilde{\varepsilon}_2} \nu_{12} n_1 (M_{21} - f_2)) \ln f_2 dv.
\end{align*}
Indeed, the terms with $\ln M_1$ (resp. $\ln M_2$) vanish since $\ln M_1$ (resp. $\ln M_2$) is a linear combination of $1,v$ and $|v|^2$ and our model satisfies the conservation of the number of particles, total momentum and total energy (see section 2.2 in \cite{Pirner}). All in all, we obtain
\begin{align}
\begin{split}
\frac{d}{dt} (H(f_1|M_1) + H(f_2|M_2)) = - (D_1(f_1,f_2)+ D_2(f_2,f_1)).
\end{split}
\end{align}
 Using \eqref{prod} we obtain
\begin{align*}
\frac{d}{dt} (&H(f_1|M_1) + H(f_2|M_2))\\ &\leq -[(\frac{1}{\varepsilon_1}\nu_{12} n_1 + \frac{1}{\tilde{\varepsilon}_1}\nu_{12} n_2) H(f_1|M_1) + (\frac{1}{\varepsilon_2}\nu_{12} n_2 + \frac{1}{\tilde{\varepsilon}_2}\nu_{12} n_1) H(f_2 |M_2)] \\ &\leq - \text{min} \lbrace \frac{1}{\varepsilon_1} \nu_{12} n_1 + \frac{1}{\tilde{\varepsilon}_1} \nu_{12} n_2 , \frac{1}{\varepsilon_2}\nu_{12} n_2 + \frac{1}{\tilde{\varepsilon}_2}\nu_{12} n_1 \rbrace ( H( f_1| M_1) + H(f_2|M_2)).
\end{align*}
Define $C:= \text{min} \lbrace \frac{1}{\varepsilon_1}\nu_{12} n_1 + \frac{1}{\tilde{\varepsilon}_1}\nu_{12} n_2 , \frac{1}{\varepsilon_2}\nu_{12} n_2 + \frac{1}{\tilde{\varepsilon}_2}\nu_{12} n_1 \rbrace ,$ then we can deduce an exponential decay with Gronwall's identity
\begin{align*}
H(f_k|M_k) &\leq H( f_1| M_1) + H(f_2|M_2)\\& \leq e^{-Ct} [H( f_1^0| M_1^0) + H(f_2^0|M_2^0)], \quad k=1,2.
\end{align*}
With the Ciszar-Kullback inequality (see proposition 1.1 in \cite{Matthes}) we get
\begin{align*}
||f_k - M_k||_{L^1(dv)} & \leq ||f_1 - M_1||_{L^1(dv)} +||f_2 - M_2||_{L^1(dv)} \\ & \leq 4 e^{-\frac{1}{2} Ct} [H( f_1^0| M_1^0) + H(f_2^0|M_2^0)]^{\frac{1}{2}}.
\end{align*}
\end{proof}

\subsection{Decay rate for the velocities and temperatures in the space-homo\-ge\-neous case}\label{app:velocities}

 In this subsection we prove decay rates for the velocities $u_1,u_2$ (resp. temperatures $T_1,T_2$) to a common value in the space-homogeneous case. We start with a decay of $|u_1-u_2|^2$.
  \begin{theorem}\label{th:estimate_vel}
Suppose that $\nu_{12}$ is constant in time. In the space-homogeneous case \eqref{BGKconv0}, we have the following decay rate of the velocities
\begin{equation*}
|u_1(t) - u_2(t)|^2 = e^{- 2 \nu_{12} (1- \delta)\left(\frac{1}{\tilde{\varepsilon}_1}n_2+\frac{\varepsilon}{\tilde{\varepsilon}_2}\frac{m_1}{m_2} n_i\right) t} |u_1(0) - u_2(0)|^2.
\end{equation*}
\end{theorem}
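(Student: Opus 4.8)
The plan is to reduce the claim to a closed scalar linear ODE for the velocity difference $w(t):=u_1(t)-u_2(t)$ by taking the first velocity moment of the space-homogeneous system \eqref{BGKconv0}. First I would multiply each equation by $v$ and integrate in $v$. In the space-homogeneous case the densities are constant in time: integrating \eqref{BGKconv0} in $v$ and using $\int(M_k-f_k)\,dv=0$ together with $n_{12}=n_1$, $n_{21}=n_2$ from \eqref{density} gives $\partial_t n_k=0$, so the left-hand sides become simply $n_k\partial_t u_k$. On the right-hand side the self-interaction terms drop out, since $M_k$ and $f_k$ share the same first moment and hence $\int v(M_k-f_k)\,dv=n_ku_k-n_ku_k=0$; only the cross-species relaxation terms survive.

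Next I would evaluate those cross terms. Using $n_{12}=n_1$ and $n_{21}=n_2$ one has $\int v\,M_{12}\,dv=n_1u_{12}$ and $\int v\,M_{21}\,dv=n_2u_{21}$, and substituting the affine forms \eqref{convexveldim} and \eqref{velocdim} yields $u_{12}-u_1=(1-\delta)(u_2-u_1)$ and $u_{21}-u_2=\frac{m_1}{m_2}\varepsilon(1-\delta)(u_1-u_2)$. Dividing by the (constant) densities then produces the decoupled pair
\begin{align*}
\partial_t u_1 &= \tfrac{1}{\tilde{\varepsilon}_1}\nu_{12} n_2 (1-\delta)(u_2-u_1),\\
\partial_t u_2 &= \tfrac{1}{\tilde{\varepsilon}_2}\nu_{12} n_1 \tfrac{m_1}{m_2}\varepsilon(1-\delta)(u_1-u_2).
\end{align*}
Subtracting gives $\partial_t w=-Kw$ with the constant coefficient $K=(1-\delta)\nu_{12}\left(\frac{n_2}{\tilde{\varepsilon}_1}+\frac{\varepsilon}{\tilde{\varepsilon}_2}\frac{m_1}{m_2}n_1\right)$. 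Here the hypothesis that $\nu_{12}$ is constant in time, together with the conservation of $n_1,n_2$ just established, is precisely what renders $K$ time-independent. Solving the scalar linear ODE gives $w(t)=e^{-Kt}w(0)$, and squaring yields the asserted $|u_1(t)-u_2(t)|^2=e^{-2Kt}|u_1(0)-u_2(0)|^2$.

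The algebra is routine once the moments are taken; the only point that requires genuine care — which I regard as the main (mild) obstacle — is confirming that the velocity subsystem truly closes, i.e. that the first moments of $M_{12}$ and $M_{21}$ depend on $u_1,u_2$ alone and not on the temperatures or higher moments. This closure is guaranteed exactly by the density constraint \eqref{density} and the affine velocity laws \eqref{convexveldim}, \eqref{velocdim}; without them the velocity equations would couple to the temperature dynamics and the clean exponential decay would fail. I would also remark that the symbol $n_i$ appearing in the exponent of the statement should read $n_1$, consistent with the interspecies frequency $\tfrac{1}{\tilde{\varepsilon}_2}\nu_{12}n_1$ in \eqref{nondimBGK}.
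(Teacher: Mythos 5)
Your proposal is correct and follows essentially the same route as the paper: you take the first velocity moment of \eqref{BGKconv0}, use the constancy of $n_1,n_2$ and the affine laws \eqref{convexveldim}, \eqref{velocdim} to obtain exactly the paper's decoupled ODE pair for $u_1,u_2$, and the only (cosmetic) difference is that you solve $\partial_t w=-Kw$ and square, whereas the paper differentiates $|u_1-u_2|^2$ directly. Your remark that the symbol $n_i$ in the stated exponent should read $n_1$ is also correct; the paper's own computation yields $n_1$ there.
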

\begin{proof}
If we multiply the  equations \eqref{BGKconv0} by $v$ and integrate with respect to $v$, we obtain by using \eqref{convexveldim}, \eqref{velocdim} and \eqref{eps}
\begin{align*}
\partial_t ( n_1 u_1)  &= \frac{1}{\tilde{\varepsilon}_1}\nu_{12} n_2 n_1  (u_{12}-u_1)= \frac{1}{\tilde{\varepsilon}_1}\nu_{12} n_2 n_1 (1- \delta) (u_2-u_1), \\
\partial_t ( n_2 u_2)&= \frac{1}{\tilde{\varepsilon}_2} \nu_{12} n_2 n_1 (u_{21}-u_2) =\frac{1}{\tilde{\varepsilon}_2} \nu_{12} n_2 n_1 \frac{m_1}{m_2} \varepsilon (1- \delta) (u_1-u_2).
\end{align*}
Since in the space-homogeneous case the densities $n_1$ and $n_2$ are constant, we actually have
\begin{align*}
\partial_t   u_1 = \frac{1}{\tilde{\varepsilon}_1}\nu_{12} n_2  (1- \delta) (u_2-u_1), \quad
\partial_t   u_2 =\frac{1}{\tilde{\varepsilon}_2} \nu_{12} n_1 \frac{m_1}{m_2} \varepsilon (1- \delta) (u_1-u_2).
\end{align*}
With this we get
\begin{align*}
\frac{1}{2} \frac{d}{dt} |u_1-u_2|^2 &= (u_1-u_2) \partial_t (u_1 - u_2) \\&= (u_1-u_2) \nu_{12} (1- \delta)\left(\frac{1}{\tilde{\varepsilon}_i}n_2+\frac{\varepsilon}{\tilde{\varepsilon}_2}\frac{m_1}{m_2} n_1\right) (u_2 - u_1 ) \\&= - \nu_{12} (1- \delta)\left(\frac{1}{\tilde{\varepsilon}_1}n_2+\frac{\varepsilon}{\tilde{\varepsilon}_2}\frac{m_1}{m_2} n_1\right)|u_1 - u_2|^2.
\end{align*}
From this, we deduce
\begin{align*}
|u_1(t) - u_2(t)|^2 = e^{- 2 \nu_{12} (1- \delta)\left(\frac{1}{\tilde{\varepsilon}_1}n_2+\frac{\varepsilon}{\tilde{\varepsilon}_2}\frac{m_1}{m_2} n_1\right) t} |u_1(0) - u_2(0)|^2.
\end{align*}
\end{proof}
We continue with a decay rate of $|T_1(t) - T_2(t)|$.
  \begin{theorem}\label{th:estimate_temp} 
Suppose $\nu_{12}$ is constant in time. In the space-homogeneous case \eqref{BGKconv0}, we have the following decay rate of the temperatures
\begin{equation*}
\begin{split}
\textcolor{black}{T_1(t) - T_2(t) = 
 e^{- C_1t} \left[T_1(0) - T_2(0)+\frac{C_2}{C_1-C_3 } ( e^{(C_1-C_3) t} - 1) |u_1(0) - u_2(0)|^2 \right],}
 \end{split}
\end{equation*}
where the constants are defined by
\begin{align*}
C_1&=(1- \alpha)\nu_{12}\left(\frac{1}{\tilde{\varepsilon}_1}n_2+\frac{\varepsilon}{\tilde{\varepsilon}_2}n_1\right),\\
C_2&=\nu_{12}\left(\frac{1}{\tilde{\varepsilon}_1}n_2\left((1-\delta)^2+\frac{\gamma}{m_1}\right)-\frac{\varepsilon}{\tilde{\varepsilon}_2}n_1\left(1-\delta^2-\frac{\gamma}{m_1}\right)\right),\\
C_3&=2 \nu_{12} (1- \delta)\left(\frac{1}{\tilde{\varepsilon}_1}n_2+\frac{\varepsilon}{\tilde{\varepsilon}_2}\frac{m_1}{m_2} n_1\right).
\end{align*}
\end{theorem}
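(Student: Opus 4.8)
The plan is to follow exactly the template of Theorem \ref{th:estimate_vel}: first derive a closed ODE system for the two temperatures, then reduce the difference $T_1-T_2$ to a single inhomogeneous linear first-order ODE whose forcing term is governed by the already-established decay of $|u_1-u_2|^2$, and finally solve that ODE explicitly.

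First I would obtain an evolution equation for each $T_k$. Because the dimensionless temperatures in \eqref{momentsdim} are built from the peculiar velocity $|v-u_k|^2$ with $u_k=u_k(t)$ time-dependent, one cannot simply multiply \eqref{BGKconv0} by $|v-u_k|^2$ and integrate. Instead I would use the identities $n_1T_1=\langle v^2 f_1\rangle - n_1u_1^2$ and $\frac{m_1}{m_2}n_2T_2=\langle v^2 f_2\rangle - n_2u_2^2$ (valid since $n_k$ is constant here), differentiate in $t$, and substitute $\partial_t\langle v^2 f_k\rangle$ from \eqref{BGKconv0} together with the velocity ODEs $\partial_t u_1=\frac{1}{\tilde{\varepsilon}_1}\nu_{12}n_2(1-\delta)(u_2-u_1)$ and $\partial_t u_2=\frac{1}{\tilde{\varepsilon}_2}\nu_{12}n_1\frac{m_1}{m_2}\varepsilon(1-\delta)(u_1-u_2)$ already derived inside the proof of Theorem \ref{th:estimate_vel}.

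The computational heart is evaluating the second moments of the mixture Maxwellians. Since $M_1$ and $M_2$ share all moments with $f_1$ and $f_2$, the self-collision terms drop out and only the cross terms survive, requiring $\langle v^2 M_{12}\rangle = n_1(T_{12}+u_{12}^2)$ and $\langle v^2 M_{21}\rangle = n_2(\frac{m_1}{m_2}T_{21}+u_{21}^2)$ (the variance of $M_{21}$ being $\frac{m_1}{m_2}T_{21}$). Substituting the explicit expressions \eqref{convexveldim}--\eqref{tempdim} for $u_{12},T_{12},u_{21},T_{21}$, I expect the cross terms of the form $u_k(u_1-u_2)$ to cancel exactly against the $-2n_ku_k\partial_t u_k$ contributions, and after collecting the $|u_1-u_2|^2$ coefficients (where a $\frac{m_1}{m_2}\varepsilon^2(1-\delta)^2$ piece cancels in the species-$2$ equation) to arrive at the clean pair
\begin{align*}
\partial_t T_1 &= \frac{1}{\tilde{\varepsilon}_1}\nu_{12}n_2(1-\alpha)(T_2-T_1) + \frac{1}{\tilde{\varepsilon}_1}\nu_{12}n_2\left((1-\delta)^2+\frac{\gamma}{m_1}\right)|u_1-u_2|^2,\\
\partial_t T_2 &= \frac{\varepsilon}{\tilde{\varepsilon}_2}\nu_{12}n_1(1-\alpha)(T_1-T_2) + \frac{\varepsilon}{\tilde{\varepsilon}_2}\nu_{12}n_1\left(1-\delta^2-\frac{\gamma}{m_1}\right)|u_1-u_2|^2.
\end{align*}
Subtracting these gives $\partial_t(T_1-T_2) = -C_1(T_1-T_2) + C_2|u_1-u_2|^2$ with precisely the constants $C_1,C_2$ of the statement.

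Finally I would insert the explicit decay $|u_1(t)-u_2(t)|^2 = e^{-C_3 t}|u_1(0)-u_2(0)|^2$ from Theorem \ref{th:estimate_vel}, so that with $y:=T_1-T_2$ the system collapses to the scalar ODE $\partial_t y = -C_1 y + C_2 e^{-C_3 t}|u_1(0)-u_2(0)|^2$. Multiplying by the integrating factor $e^{C_1 t}$ and integrating on $[0,t]$, using $\int_0^t e^{(C_1-C_3)s}\,ds = \frac{e^{(C_1-C_3)t}-1}{C_1-C_3}$, reproduces the claimed formula. I expect the main obstacle to be the bookkeeping in the third paragraph: correctly differentiating the $u_k$-dependent temperature definition and verifying the two cancellations (the $u_k(u_1-u_2)$ cross terms and the $\frac{m_1}{m_2}\varepsilon^2(1-\delta)^2$ terms) so that the messy intermediate coefficients condense into exactly $C_1$ and $C_2$; by contrast the concluding ODE solve is entirely routine.
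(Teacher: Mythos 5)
Your proposal is correct and takes essentially the same route as the paper: the paper derives exactly your two ODEs, namely $\partial_t T_1 = \frac{1}{\tilde{\varepsilon}_1}\nu_{12} n_2\left((1-\alpha)(T_2-T_1)+\left((1-\delta)^2+\frac{\gamma}{m_1}\right)|u_1-u_2|^2\right)$ and its species-2 counterpart (with the same $\frac{m_1}{m_2}\varepsilon^2(1-\delta)^2$ cancellation you anticipate), then subtracts to get $\partial_t(T_1-T_2)=-C_1(T_1-T_2)+C_2|u_1-u_2|^2$ and concludes by Duhamel's formula together with theorem \ref{th:estimate_vel}, just as you do. The only cosmetic difference is at the first step: rather than differentiating the raw second moments $\langle v^2 f_k\rangle - n_k u_k^2$, the paper does multiply \eqref{BGKconv0} by $\frac{1}{n_1}|v-u_1|^2$ (resp. $\frac{m_2}{m_1}\frac{1}{n_2}|v-u_2|^2$) and integrate, handling the time-dependent $u_k$ via the product rule, where the extra term $2\int \frac{1}{n_1} f_1 (v-u_1)\cdot\partial_t u_1\, dv$ vanishes because the first centered moment of $f_1$ is zero — an equivalent piece of bookkeeping to your cross-term cancellation.
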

\begin{proof}
If we multiply the first equation of \eqref{BGKconv0} by $\frac{1}{ n_1}|v-u_1|^2$ and integrate with respect to $v$, we obtain
\begin{align}
\int \frac{1}{n_1}|v-u_1|^2 \partial_t f_1 dv = \frac{1}{\tilde{\varepsilon}_1}\nu_{12} n_2 \frac{1}{n_1} \int  |v-u_1|^2(M_{12} - f_1) dv.
\label{1}
\end{align}
Indeed, the first relaxation term vanishes since $M_1$ and $f_1$ have the same temperature.
We simplify the left-hand side of \eqref{1} to
\begin{align*}
\int \frac{1}{n_1}|v-u_1|^2 \partial_t f_1 dv &= \int \frac{1}{n_1} \partial_t (|v-u_1|^2 f_1) dv + 2\int \frac{1}{n_1} f_1 (v-u_1) \cdot \partial_t u_1 dv \\&= \partial_t (T_1) + 0, 
\end{align*} 
since the density $n_1$ is constant. The right-hand side of \eqref{1} simplifies to
\begin{align*}
\frac{1}{\tilde{\varepsilon}_1}\nu_{12} n_2 \frac{1}{n_1} \int  |v-u_1|^2 (M_{12}-f_1)dv = \frac{1}{\tilde{\varepsilon}_1} \nu_{12} n_2 (T_{12} + |u_{12} - u_{1}|^2 - T_1) \\= \frac{1}{\tilde{\varepsilon}_1}\nu_{12} n_2 \left((1-\alpha) (T_2-T_1) + \left( (1-\delta)^2 +  \frac{\gamma}{m_1}\right)|u_2-u_1|^2 \right).
\end{align*}
For the second species we multiply the second equation of \eqref{BGKconv0} by $\frac{m_2}{m_1} \frac{1}{n_2} |v-u_2|^2$. For the left-hand side, we obtain by using \eqref{momentsdim}
\begin{align*}
\int \frac{m_2}{m_1} \frac{1}{n_2} |v-u_2|^2 \partial_t f_2 dv = \partial_t T_2,
\end{align*}
and for the right-hand side using \eqref{velocdim}, \eqref{tempdim} and  \eqref{eps}
\begin{align*}
\frac{1}{\tilde{\varepsilon}_2} &\nu_{12} n_2 \frac{m_2}{m_1} \frac{1}{n_2} \int |v- u_2|^2  (M_{21} - f_2)dv = \frac{1}{\tilde{\varepsilon}_2} \nu_{12} n_1 (T_{21} + \frac{m_2}{m_1} |u_{21}- u_2|^2 - T_2) \\&= \frac{1}{\tilde{\varepsilon}_2} \nu_{12} n_1 \left[ \varepsilon (1- \alpha) (T_1-T_2) \right.\\ 
&\left.+ \left(\varepsilon (1- \delta)\left( \frac{m_1}{m_2} \varepsilon (\delta -1) + \delta + 1\right) - \varepsilon \frac{\gamma}{m_1} + \varepsilon^2 (1- \delta)^2 \frac{m_1}{m_2} \right) |u_1- u_2|^2 \right] \\ 
&= \frac{1}{\tilde{\varepsilon}_2} \nu_{12} n_1 \left( \varepsilon (1- \alpha) (T_1-T_2)+ \varepsilon (1- \delta^2-\frac{\gamma}{m_1}) |u_1- u_2|^2 \right). 
\end{align*}
  So, we obtain
\begin{align*}
\partial_t  T_1 &= \frac{1}{\tilde{\varepsilon}_1}\nu_{12} n_2 \left((1-\alpha) (T_2-T_1) + \left( (1-\delta)^2 +  \frac{\gamma}{m_1}\right)|u_2-u_1|^2 \right), \\
\partial_t  T_2 &= \frac{1}{\tilde{\varepsilon}_2} \nu_{12} n_1 \left( \varepsilon (1- \alpha) (T_1-T_2)+ \varepsilon \left(1- \delta^2-\frac{\gamma}{m_1}\right) |u_1- u_2|^2 \right).
\end{align*}
We deduce
\begin{align*}
\partial_t ( T_1 - T_2) &=- (1-\alpha)\nu_{12}\left(\frac{1}{\tilde{\varepsilon}_1}n_2+\frac{\varepsilon}{\tilde{\varepsilon}_2}n_1\right) (T_1-T_2)\\
&+ \nu_{12} \left(\frac{1}{\tilde{\varepsilon}_1}n_2\left( (1-\delta)^2 +  \frac{\gamma}{m_1}\right)-\frac{\varepsilon}{\tilde{\varepsilon}_2}n_1\left(1- \delta^2-\frac{\gamma}{m_1}\right) \right) |u_1- u_2|^2, 
\end{align*}
or with the constants defined in this theorem \ref{th:estimate_temp} 
\begin{align*}
\partial_t ( T_1 - T_2) &=- C_1 (T_1-T_2)+ C_2 |u_1- u_2|^2. 
\end{align*}
Duhamel's formula gives
\begin{align*}
T_1(t) - T_2(t)=e^{-C_1t}(T_1(0)-T_2(0))+C_2e^{-C_1t}\int_0^te^{C_1s}|u_1(s)- u_2(s)|^2ds,
\end{align*}
and by using theorem \ref{th:estimate_vel}, we have \textcolor{black}{
\begin{align*}
T_1(t) - T_2(t)&= e^{-C_1t}(T_1(0)-T_2(0))+C_2 e^{-C_1t}\int_0^te^{C_1s}e^{-C_3s}ds|u_1(0)- u_2(0)|^2\\
&= e^{-C_1t}\left(T_1(0)-T_2(0)+\frac{C_2}{C_1-C_3}(e^{(C_1-C_3)t}-1)|u_1(0)- u_2(0)|^2\right).
\end{align*}}
\end{proof}

\section{Numerical approximation}\label{sec:numapp}

This section is devoted to the numerical approximation of the two-species micro-macro system (\ref{micro1})-(\ref{macro1})-(\ref{micro2})-(\ref{macro2}). Following the idea of \cite{MicroMacro2013}, we propose to use a particle method to discretize both microscopic equations (\ref{micro1})-(\ref{micro2}), in order to reduce the cost of the method when approaching the Maxwellian equilibrium. Macroscopic equations (\ref{macro1})-(\ref{macro2}) are solved by a classical Finite Volume method.

In this paper, we only present the big steps of the method and refer to \cite{MicroMacro2013} for the details.

For the microscopic parts, we use a Particle-In-Cell method (see for example \cite{PICmethod}): we approach $g_{11}$ (resp. $g_{22}$) by a set of $N_{p_1}$ (resp. $N_{p_2}$) particles, with position $x_{1_k}(t)$ (resp. $x_{2_k}(t)$), velocity $v_{1_k}(t)$ (resp. $v_{2_k}(t)$) and weight $\omega_{1_k}(t)$ (resp. $\omega_{2_k}(t)$), $k=1,\dots,N_{p_1}$ (resp. $k=1,\dots,N_{p_2}$). Then we assume that the microscopic distribution functions have the following expression:
\begin{align*}
g_{11}(x,v,t)&=\sum_{k=1}^{N_{p_1}}\omega_{1_k}(t)\delta(x-x_{1_k}(t))\delta(v-v_{1_k}(t)),\\
g_{22}(x,v,t)&=\sum_{k=1}^{N_{p_2}}\omega_{2_k}(t)\delta(x-x_{2_k}(t))\delta(v-v_{2_k}(t)),
\end{align*}
with $\delta$ the Dirac mass. Moreover, we have the following relations:
\begin{align*}
\omega_{1_k}(t)&=g_{11}(x_{1_k}(t),v_{1_k}(t),t)\frac{L_xL_v}{N_{p_1}},~k=1,\dots,N_{p_1},\\
\omega_{2_k}(t)&=g_{22}(x_{2_k}(t),v_{2_k}(t),t)\frac{L_xL_v}{N_{p_2}},~k=1,\dots,N_{p_2},
\end{align*}
where $L_x\in\mathbb{R}$ (resp. $L_v\in\mathbb{R}$) denotes the length of the domain in the space (resp. velocity) direction.
 
The method consists now in splitting the transport and the source parts of (\ref{micro1}) (resp.(\ref{micro2})). Let us consider (\ref{micro1}), the steps being the same for (\ref{micro2}). The transport part 
\begin{align}
\partial_t g_{11} + v \partial_x  g_{11} 
= 0,
\end{align}
is solved by pushing the particles, that is evolving the positions \textcolor{black}{(velocities are constants)} thanks to the equations of motion:
$$\mathrm{d}_tx_{1_k}(t)=v_{1_k}(t),~~~\mathrm{d}_tv_{1_k}(t)=0,~~~\forall~k=1,\dots,N_{p_1}.$$
The source part 
\begin{align}
\begin{split}
\partial_t g_{11} =&- (\mathds{1}- \Pi_{M_1})(v\partial_x  M_1) +  \Pi_{M_1}(v\partial_x  g_{11} ) 
\end{split}
\end{align}
is solved by evolving the weights. Let us denote by $S(x,v,t)$ the right-hand side such that $\partial_t g_{11} =S(x,v,t)$. We compute the weight corresponding to S using the relation $s_{1_k}(t)=S(x_{1_k}(t),v_{1_k}(t),t)\frac{L_xL_v}{N_{p_1}},~k=1,\dots,N_{p_1}$ and then solve 
$$\mathrm{d}_t\omega_{1_k}(t)=s_{1_k}(t).$$ 
The strategy is the same as in paragraph 4.1.2 of \cite{MicroMacro2013}, where only one species is considered (and so there is no coupling terms). The supplementary terms coming from the coupling of both species are treated in the source part as the other source terms. They do not add particular difficulty.

A projection step, similar to the matching procedure of \cite{matching}, ensures the preservation of the micro-macro structure (\ref{eq:mi_ma_decomp}) and in particular the property (\ref{eq:moments_of_g}) on the moments of $g_{11}$ (resp. $g_{22}$). Details are given in subsection 4.2 of \cite{MicroMacro2013}.

Finally, macroscopic equations (\ref{macro1})-(\ref{macro2}) are discretized on a grid in space and solved by a classical Finite Volume method. For the one species case, this is detailed in subsection 4.3 of \cite{MicroMacro2013}. 
    
\section{Numerical results}\label{sec:numresults}

We present in this section some numerical experiments obtained by the numerical approximation presented in section \ref{sec:numapp}. A first series of tests aims at verifying numerically the decay rates of velocities and temperatures proved in subsection \ref{app:velocities} in the space-homogeneous case. 
 In a second series of tests, we are interested in the evolution in time of distribution functions, velocities and temperatures  in the general case. In particular, we want to see the influence of the collision frequencies.

In all this section, we consider the phase-space domain $\left(x,v\right)\in\left[0,4\pi\right]\times\left[-10,10\right]$ (assuming that physical particles of velocity $v$ such that $|v|>10$ can be negligible), so that $L_x=4\pi$ and $L_v=20$. \textcolor{black}{Concerning the mixture parameters, we take $\alpha=\delta=0.5$ and $\gamma=0.1$.}

\subsection{Decay rates in the space-homogeneous case}

We first propose to validate our model in the space-homogeneous case, where we have an estimation of the decay rate of $|u_1(t) - u_2(t)|^2$ and of $|T_1(t)-T_2(t)|$ (see section \ref{sec:spacehomcase}). \textcolor{black}{Here, we want to check if the behaviour of a gas mixture in the sense of relaxation to a global equilibrium (Maxwell distributions with the same mean velocity and temperature) is obtained in a reasonable way.} Note that as in section \ref{sec:spacehomcase}, we simplify the notations: $u_1(x,t)=u_1(t)$, $u_2(x,t)=u_2(t)$, $T_1(x,t)=T_1(t)$, $T_2(x,t)=T_2(t)$.

We apply a simplified version of the numerical approximation presented in section \ref{sec:numapp}, adapted to the space-homogeneous system (\ref{BGKconv0}) in its micro-macro form. For different initial conditions, we plot the evolution in time of $|u_1(t) - u_2(t)|^2$ (resp. $|T_1(t)-T_2(t)|$) and compare it to the estimates given in theorem \ref{th:estimate_vel} (resp. theorem \ref{th:estimate_temp}). For all  of these tests, we take $N_{p_1}=N_{p_2}=10^4$ and $\Delta t=10^{-4}$.

The first initial condition we consider corresponds to two Maxwellian functions:
\begin{align}
f_1(v,t=0)&=\frac{n_1}{\sqrt{2\pi T_1(t=0)}}\exp\left(-\frac{|v-u_1(t=0)|^2}{2T_1(t=0)}\right),\\
f_2(v,t=0)&=\frac{n_2}{\sqrt{2\pi T_2(t=0) \frac{m_1}{m_2}}}\exp\left(-\frac{|v-u_2(t=0)|^2}{2T_2(t=0)} \frac{m_2}{m_1}\right),
\end{align}
with the following parameters: $n_1=1$, $u_1(t=0)=0.5$, $T_1(t=0)=1$, $m_1=1$, $n_2=1.2$, $u_2(t=0)=0.1$, $T_2(t=0)=0.1$, $m_2=1.5$, chosen as in subsection 5.1 of \cite{Jin}. Results for $\varepsilon_1=\varepsilon_2=\tilde{\varepsilon}_1=\tilde{\varepsilon}_2=0.05$ are given in figure \ref{fig:vel_maxJin_1}  and results for $\varepsilon_1=\varepsilon_2=\tilde{\varepsilon}_1=\tilde{\varepsilon}_2=0.01$ are given in figure \ref{fig:vel_maxJin_2}. In these two cases, we plot $|u_1(t) - u_2(t)|$ too. As in \cite{Jin}, we remark that when the Knudsen numbers are smaller \textcolor{black}{(or when the collision frequencies are larger)}, the velocities, as well as the temperatures, converge faster to the equilibrium. \textcolor{black}{Moreover, the decay rates obtained with our scheme are in very good agreement with the theoretical ones established in theorems \ref{th:estimate_vel} and \ref{th:estimate_temp}.}

\begin{figure}[!htb]
\includegraphics[angle=-90,width=0.49\textwidth]{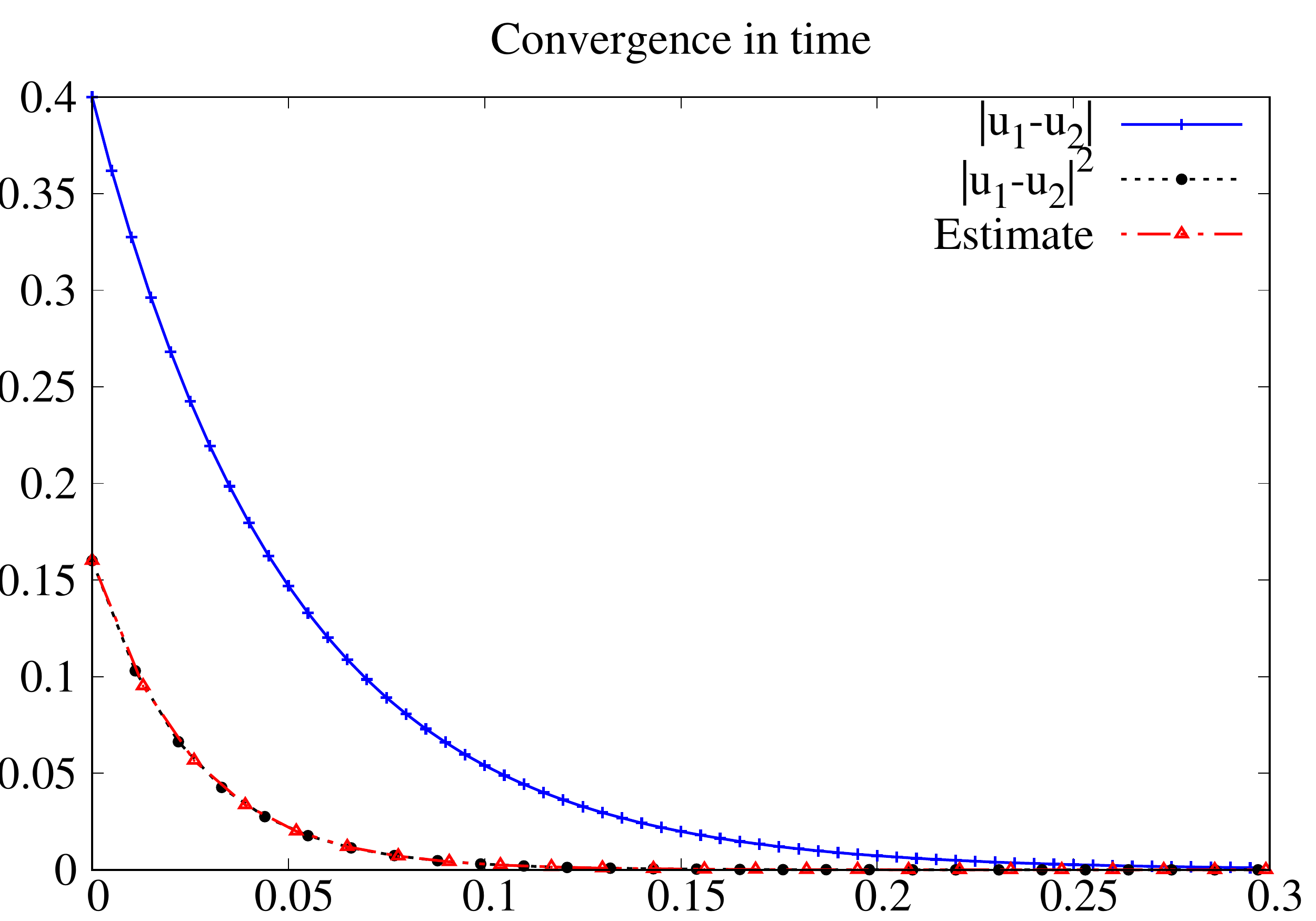}
\includegraphics[angle=-90,width=0.49\textwidth]{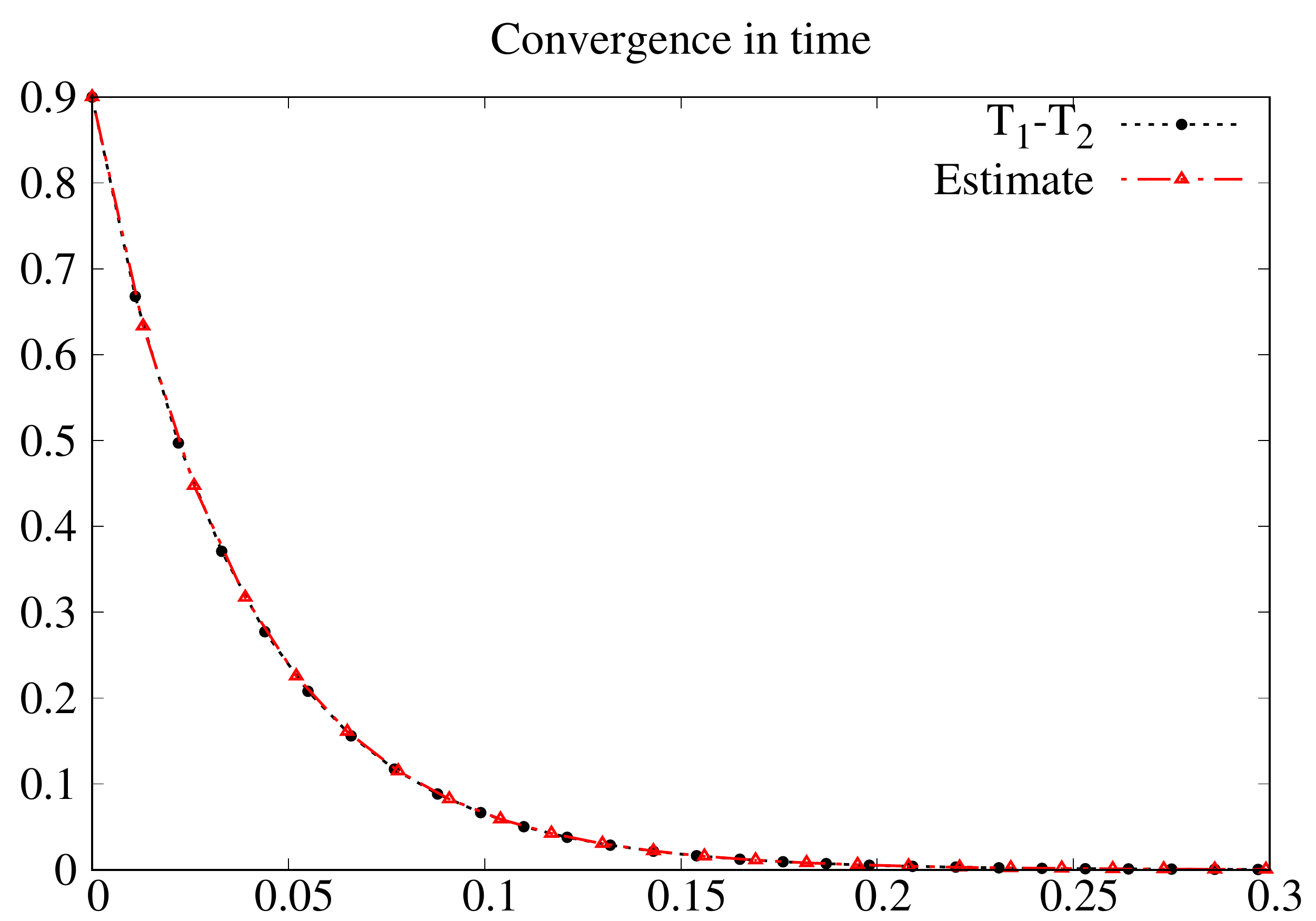}
\caption{Space-homogeneous case. Maxwellians initial conditions. Evolution in time of $|u_1(t) - u_2(t)|$, $|u_1(t) - u_2(t)|^2$ (left) and $|T_1(t) - T_2(t)|$ (right). Comparison to the estimated decay rates. Knudsen numbers: $\varepsilon_1=\varepsilon_2=\tilde{\varepsilon}_1=\tilde{\varepsilon}_2=0.05$.}
\label{fig:vel_maxJin_1}
\end{figure}

\begin{figure}[ht]
\includegraphics[angle=-90,width=0.49\textwidth]{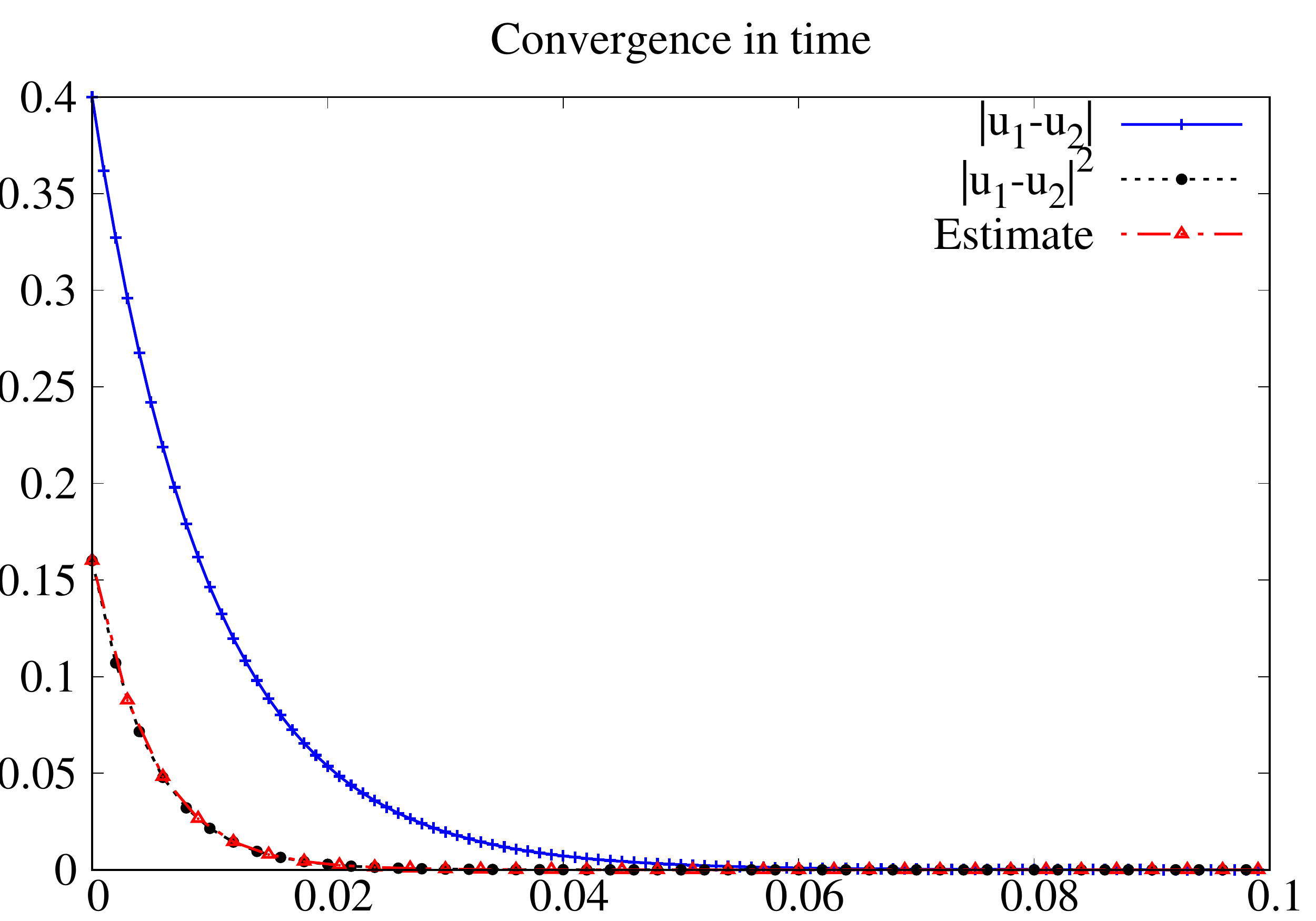}
\includegraphics[angle=-90,width=0.49\textwidth]{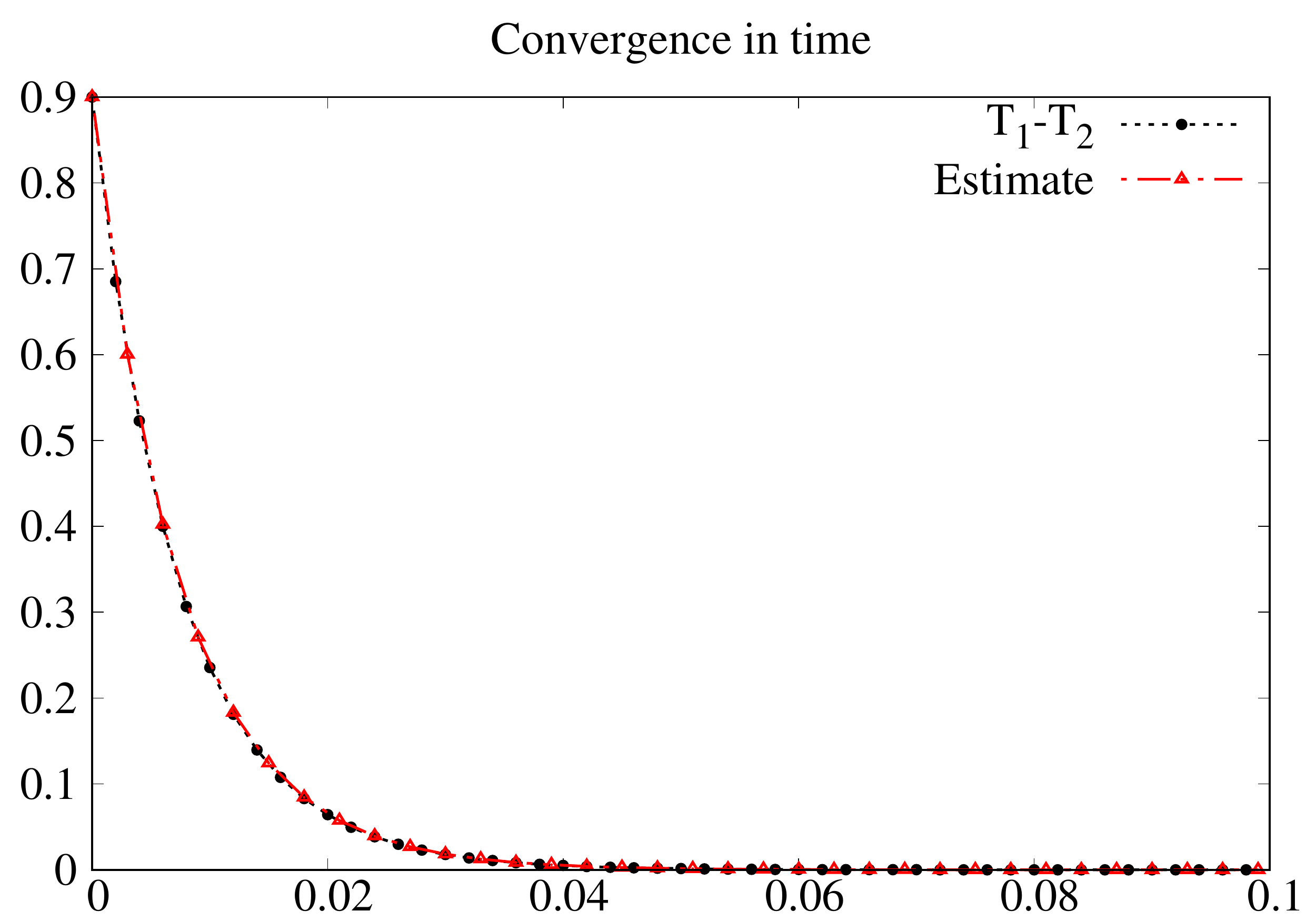}
\caption{Space-homogeneous case. Maxwellians initial conditions. Evolution in time of $|u_1(t) - u_2(t)|$, $|u_1(t) - u_2(t)|^2$ (left) and $|T_1(t) - T_2(t)|$ (right). Comparison to the estimated decay rates. Knudsen numbers: $\varepsilon_1=\varepsilon_2=\tilde{\varepsilon}_1=\tilde{\varepsilon}_2=0.01$.}
\label{fig:vel_maxJin_2}
\end{figure}

We propose now to consider $T_1(t=0)=0.08$ (other parameters are unchanged) and to study two other sets of Knudsen numbers. Results for $\varepsilon_1=\varepsilon_2=\tilde{\varepsilon}_1=\tilde{\varepsilon}_2=1$ are given in figure \ref{fig:vel_maxJin_bis_1} and results for $\varepsilon_1=\varepsilon_2=\tilde{\varepsilon}_1=1$, $\tilde{\varepsilon}_2=0.05$ are given in figure \ref{fig:vel_maxJin_bis_2}. \textcolor{black}{In this case too, we recover the right decay rates.}

\begin{figure}[ht]
\includegraphics[angle=-90,width=0.49\textwidth]{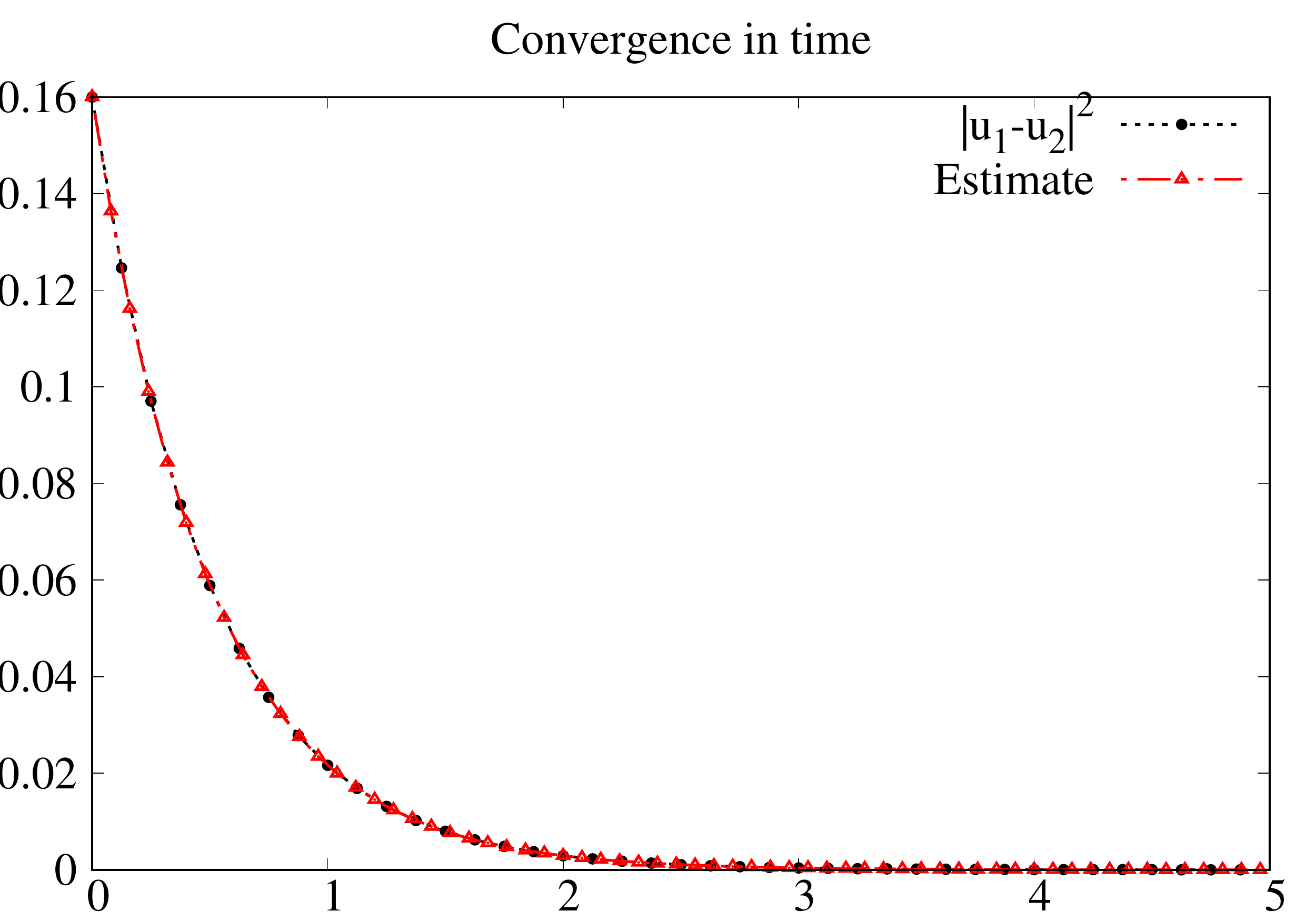}
\includegraphics[angle=-90,width=0.49\textwidth]{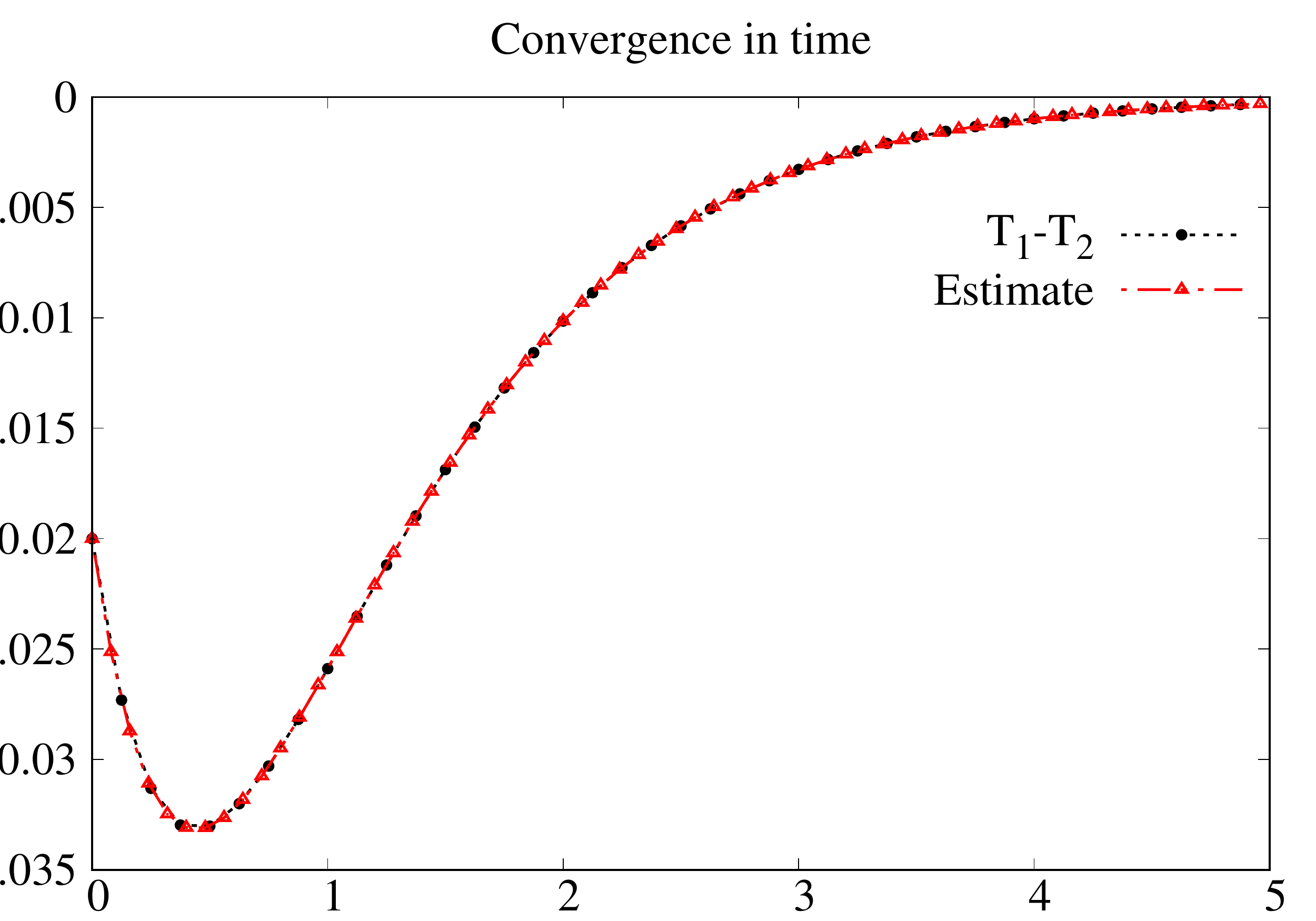}
\caption{Space-homogeneous case. Maxwellians initial conditions. Evolution in time of $|u_1(t) - u_2(t)|^2$ (left) and $|T_1(t) - T_2(t)|$ (right). Comparison to the estimated decay rates. Knudsen numbers: $\varepsilon_1=\varepsilon_2=\tilde{\varepsilon}_1=\tilde{\varepsilon}_2=1$.}
\label{fig:vel_maxJin_bis_1}
\end{figure}

\begin{figure}[ht]
\includegraphics[angle=-90,width=0.49\textwidth]{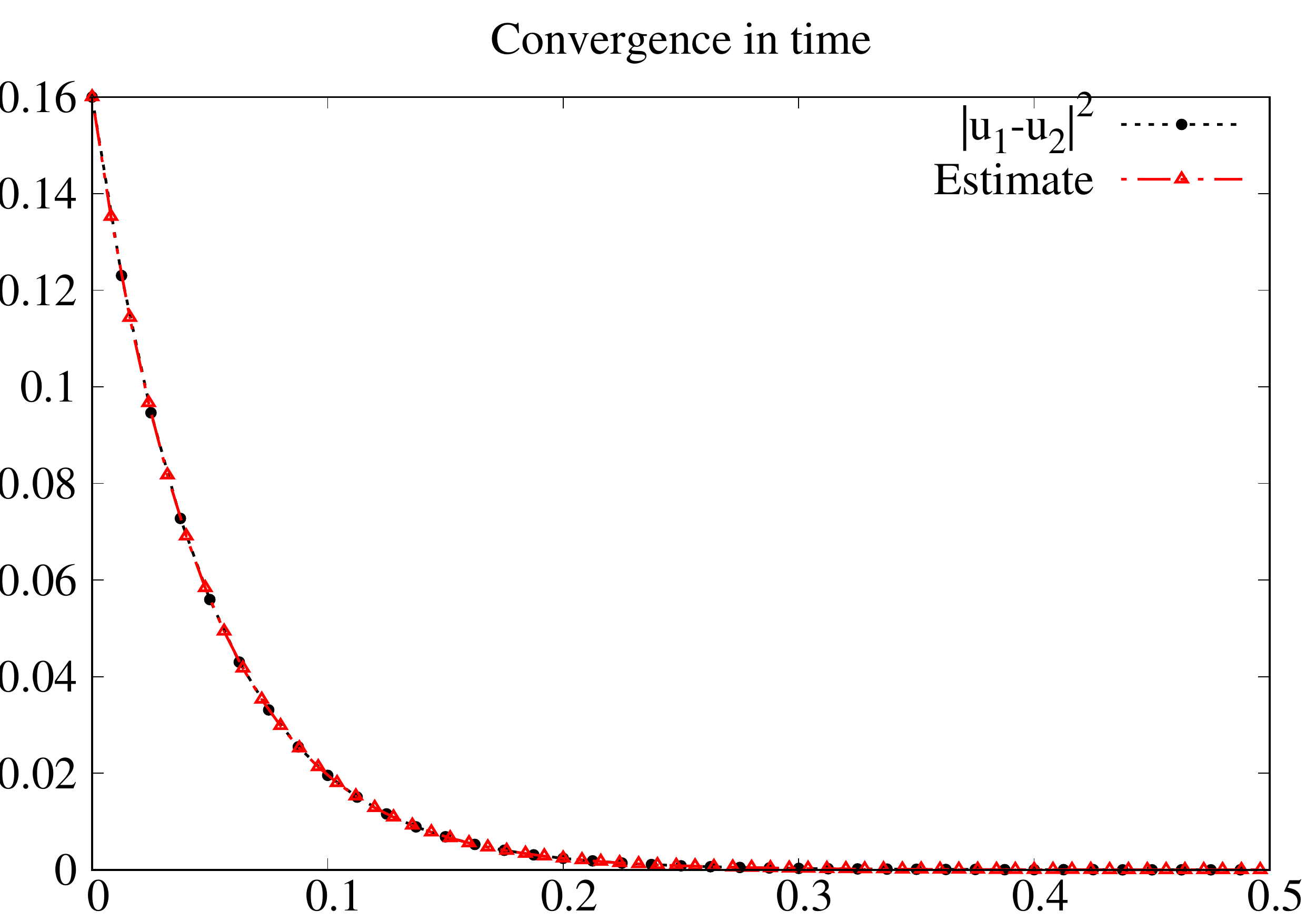}
\includegraphics[angle=-90,width=0.49\textwidth]{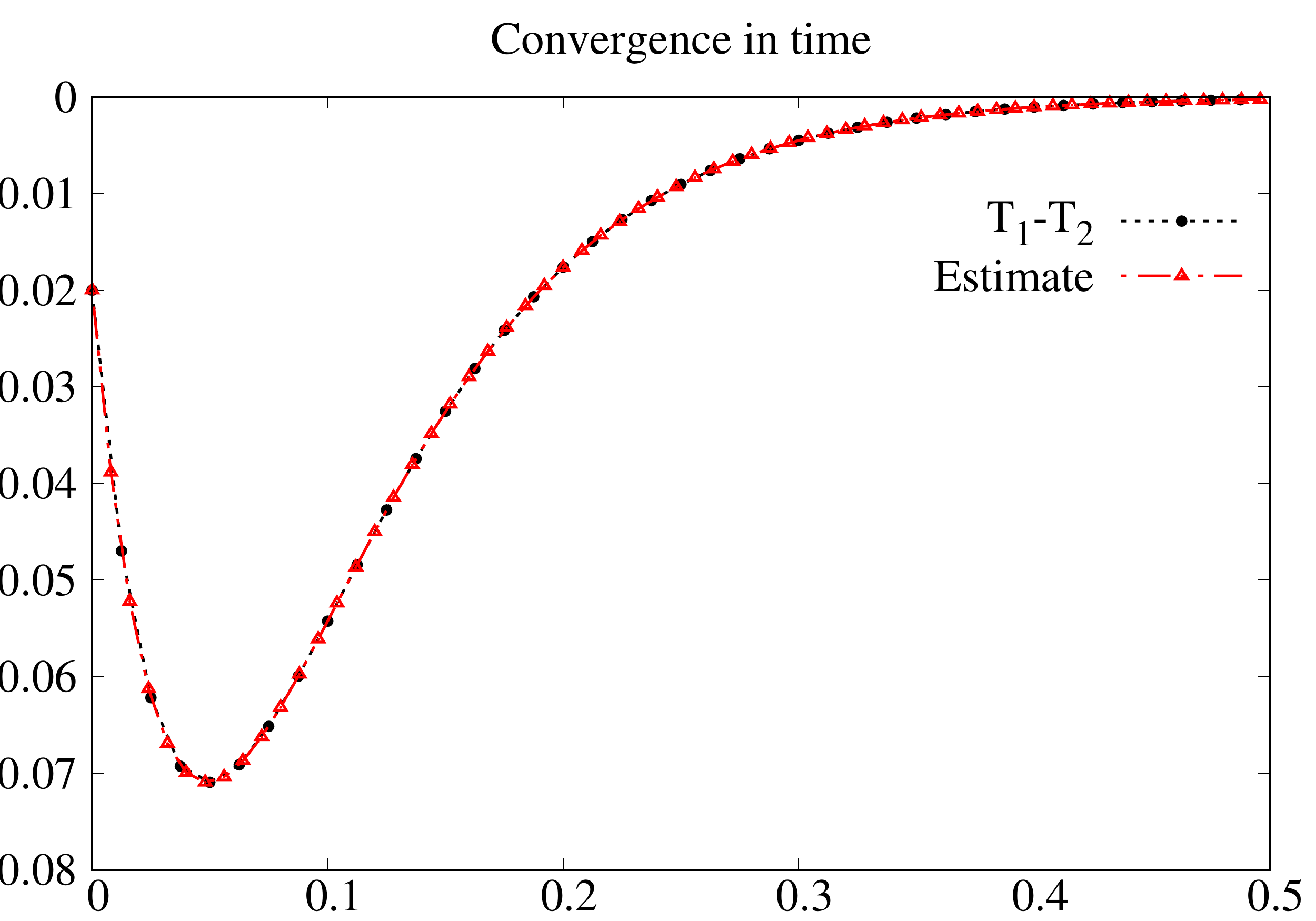}
\caption{Space-homogeneous case. Maxwellians initial conditions. Evolution in time of $|u_1(t) - u_2(t)|^2$ (left) and $|T_1(t) - T_2(t)|$ (right). Comparison to the estimated decay rates. Knudsen numbers: $\varepsilon_1=\varepsilon_2=\tilde{\varepsilon}_1=1$, $\tilde{\varepsilon}_2=0.05$.}
\label{fig:vel_maxJin_bis_2}
\end{figure}

We propose then to study the convergence for an other initial condition, considering
\begin{align}
f_1(v,t=0)&=\frac{v^4}{3\sqrt{2\pi}}\exp\left(-\frac{|v|^2}{2}\right),\\
f_2(v,t=0)&=\frac{n_2}{\sqrt{2\pi T_2(t=0)m_1/m_2}}\exp\left(-\frac{|v-u_2(t=0)|^2}{2T_2(t=0)}\frac{m_2}{m_1}\right),
\end{align}
with the following parameters: $n_2=1.2$, $u_2(t=0)=0.1$, $T_2(t=0)=0.1$, $m_2=1.5$. Here, the initial distribution of species 1 is not a Maxwellian, and then $g_{11}(v,t=0)\neq 0$. The estimates of theorems \ref{th:estimate_vel} and \ref{th:estimate_temp} are still verified, as we can see on figure \ref{fig:vel_maxJin_v4_1} for $\varepsilon_1=\varepsilon_2=\tilde{\varepsilon}_1=\tilde{\varepsilon}_2=1$. By taking now $T_2(t=0)=5$ (the other parameters being unchanged), we obtain results presented on figure \ref{fig:vel_maxJin_v4_2}.

\begin{figure}[ht]
\includegraphics[angle=-90,width=0.49\textwidth]{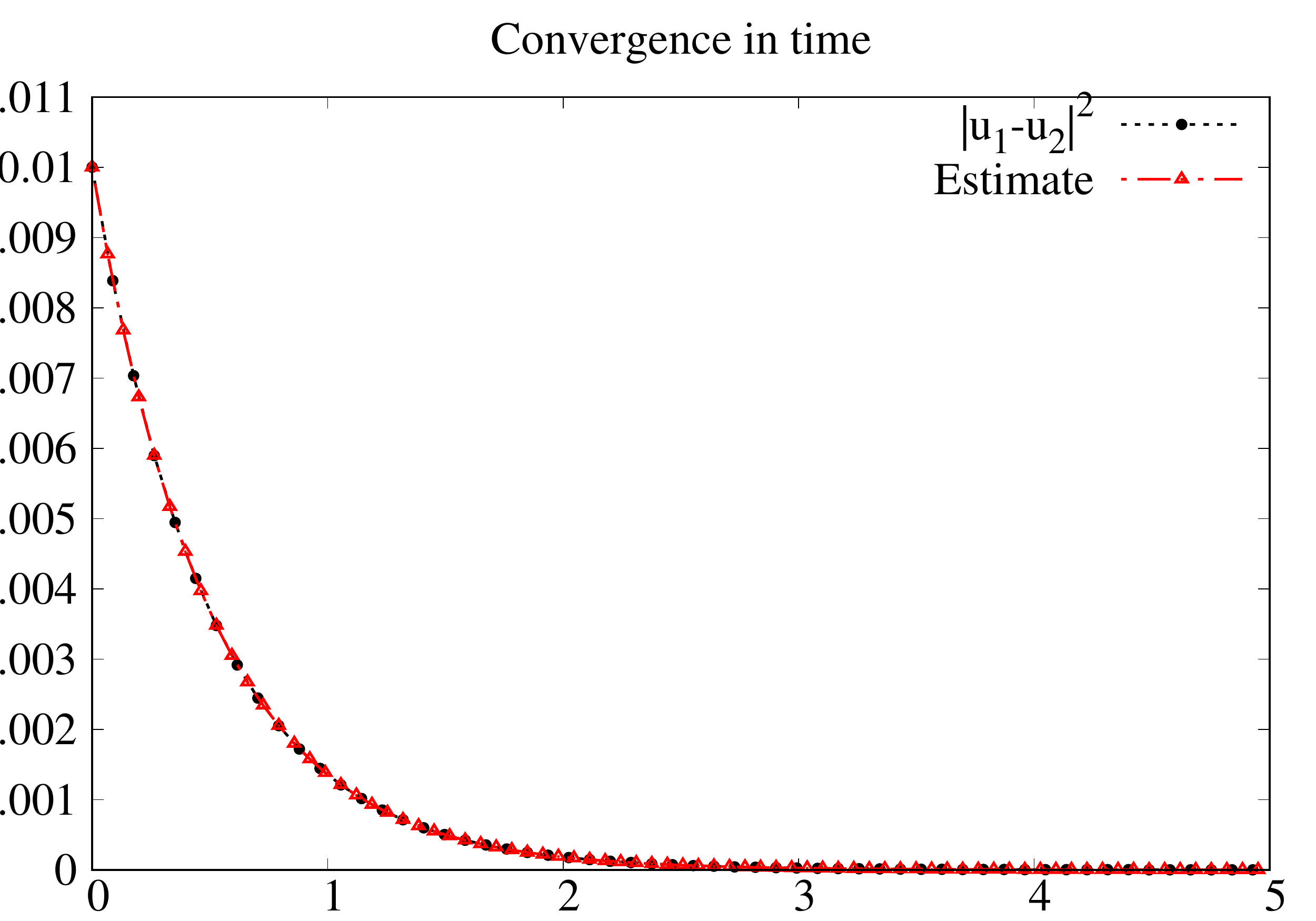}
\includegraphics[angle=-90,width=0.49\textwidth]{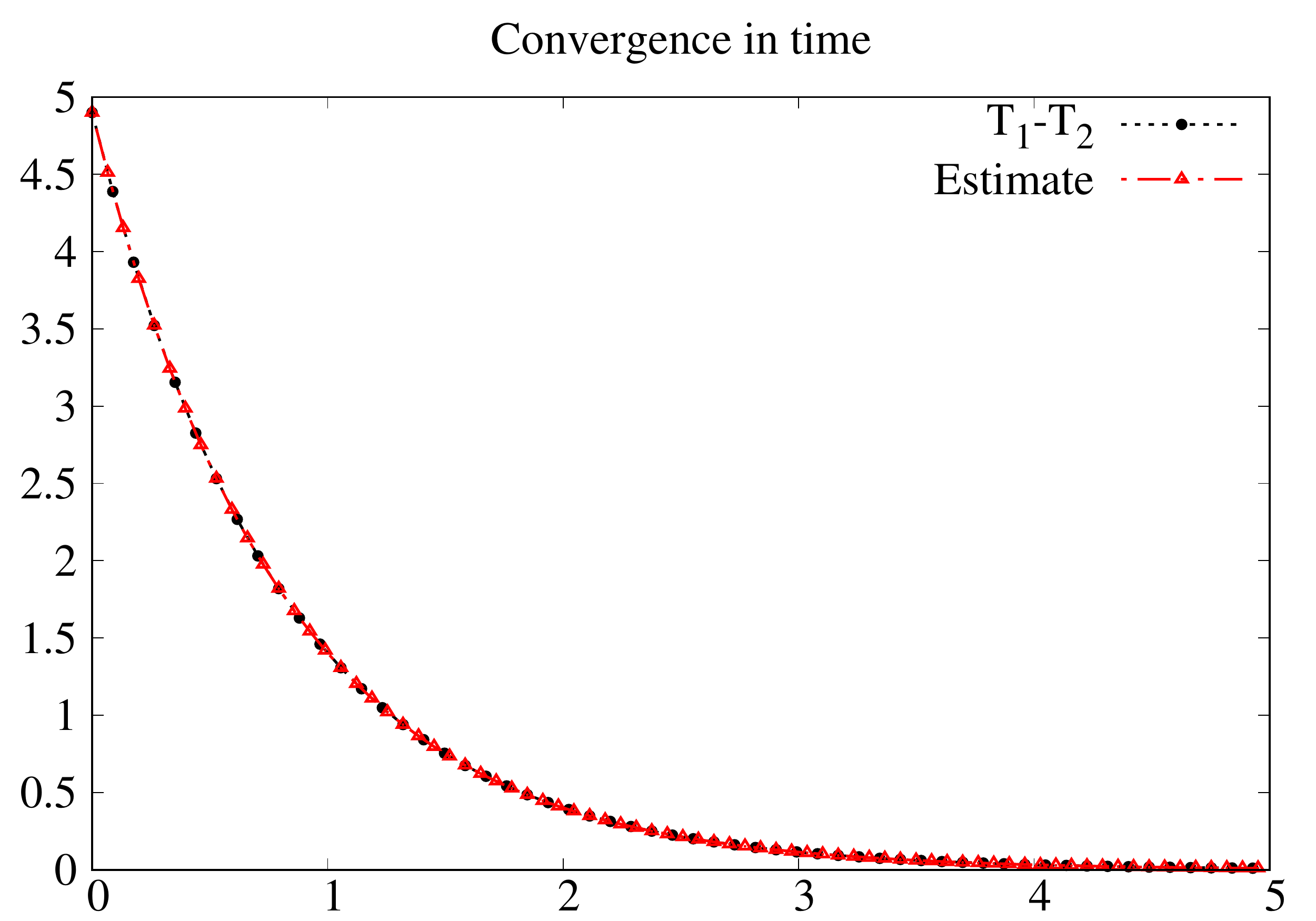}
\caption{Space-homogeneous case. Mixed initial conditions. Evolution in time of $|u_1(t) - u_2(t)|^2$ (left) and $|T_1(t) - T_2(t)|$ (right). Comparison to the estimated decay rates. Knudsen numbers: $\varepsilon_1=\varepsilon_2=\tilde{\varepsilon}_1=\tilde{\varepsilon}_2=1$.}
\label{fig:vel_maxJin_v4_1}
\end{figure}

\begin{figure}[ht]
\includegraphics[angle=-90,width=0.49\textwidth]{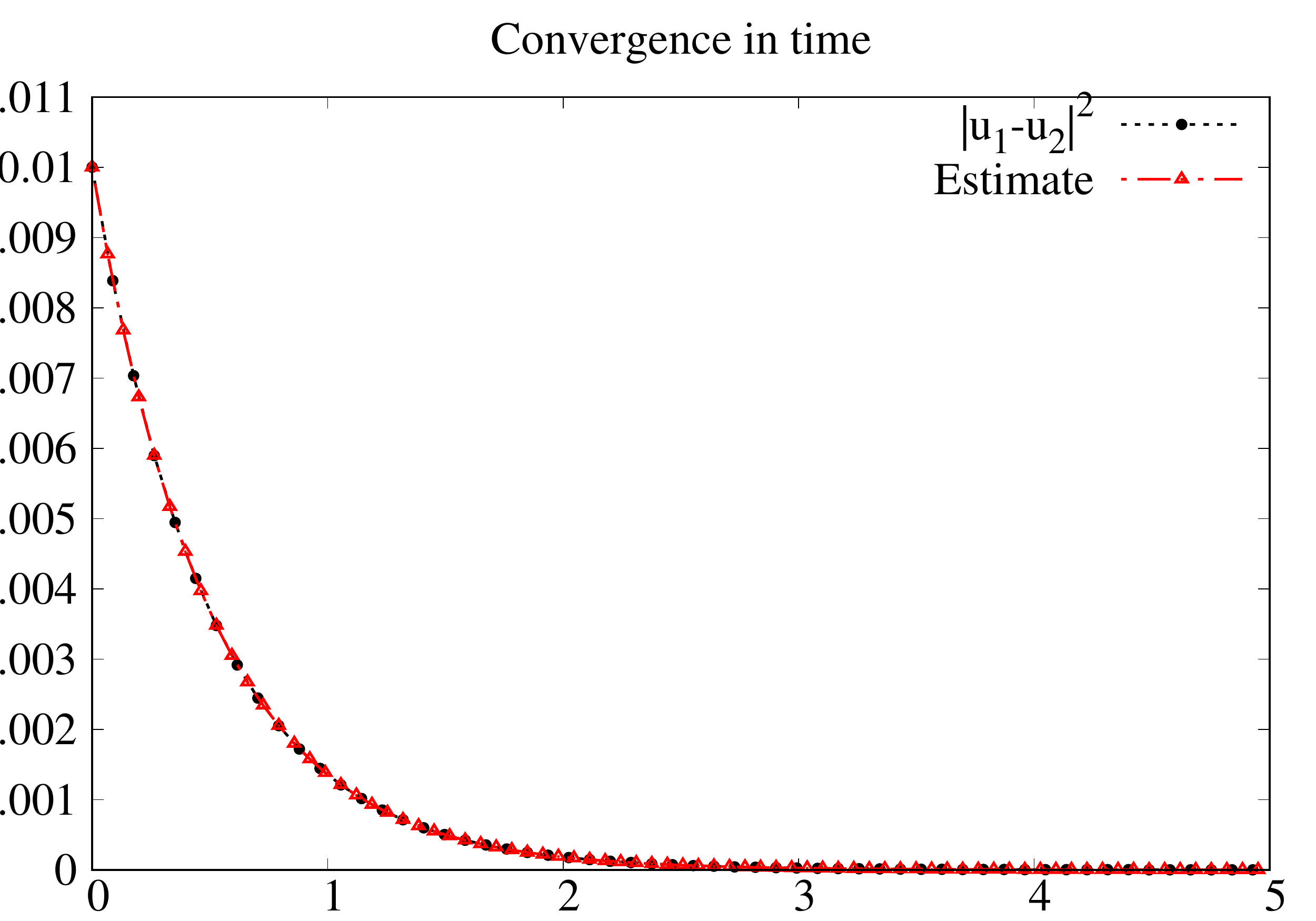}
\includegraphics[angle=-90,width=0.49\textwidth]{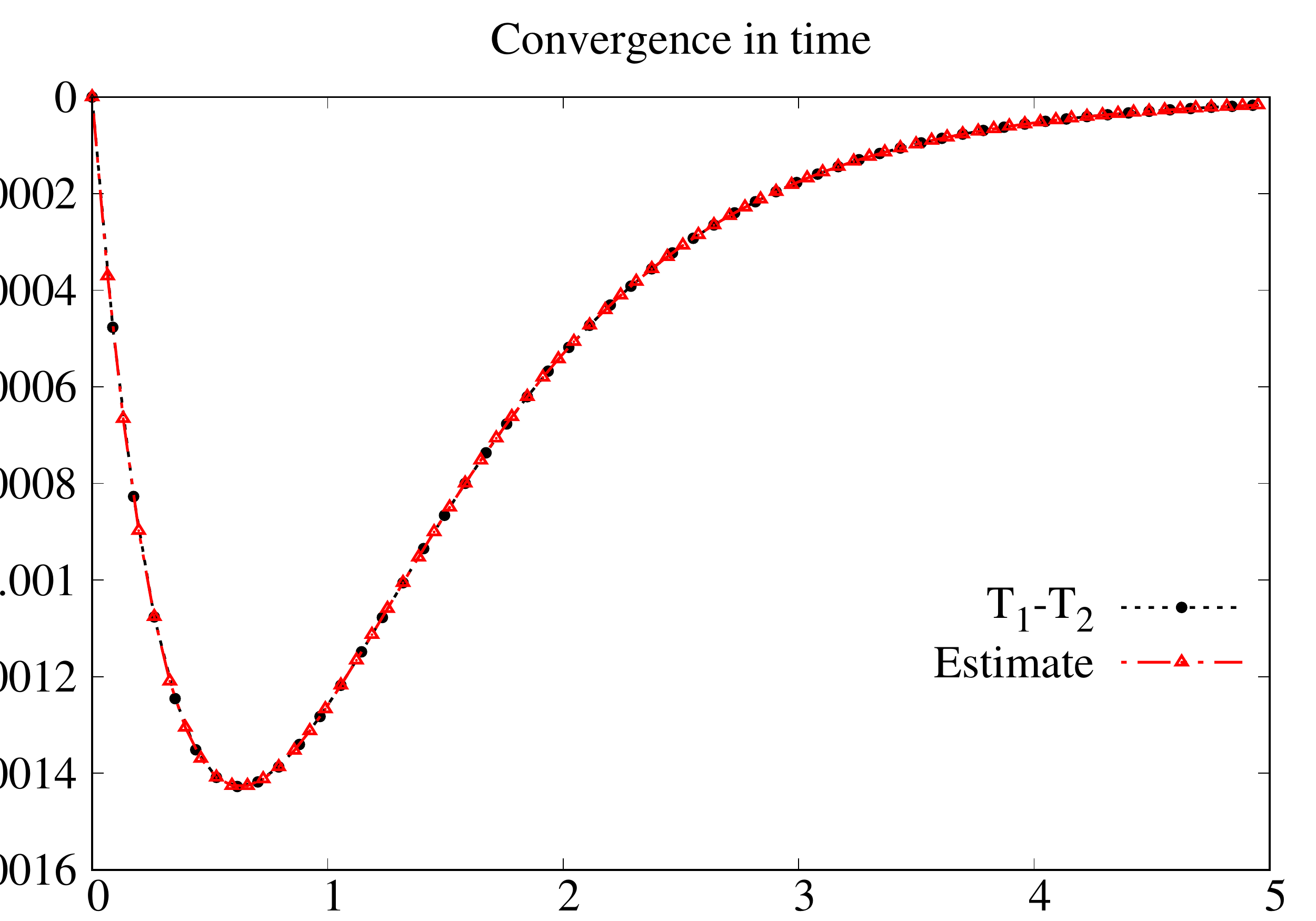}
\caption{Space-homogeneous case. Mixed initial conditions. Evolution in time of $|u_1(t) - u_2(t)|^2$ (left) and $|T_1(t) - T_2(t)|$ (right). Comparison to the estimated decay rates. Knudsen numbers: $\varepsilon_1=\varepsilon_2=\tilde{\varepsilon}_1=\tilde{\varepsilon}_2=1$.}
\label{fig:vel_maxJin_v4_2}
\end{figure}

\subsection{Relaxation towards a global equilibrium}

We present here numerical results in the general (non homogeneous) case. We consider micro-macro equations \eqref{micro1}-\eqref{macro1}-\eqref{micro2}-\eqref{macro2} and discretize them as explained in section \ref{sec:numapp}.

We are interested in the evolution in time of the distribution functions $f_1$, $f_2$ and other quantities such as 
the difference of  the mean velocities of species 1 and species 2 (resp. temperatures) in uniform norm $||u_1(x,t)-u_2(x,t)||_\infty$ (resp. $||T_1(x,t)-T_2(x,t)||_\infty$). Different values of $\varepsilon_1$, $\varepsilon_2$, $\tilde{\varepsilon_1}$ and $\tilde{\varepsilon_2}$ are considered in order to see the influence of the intra and interspecies collision frequencies.

In the following tests, species 1 and 2 are initialized following
\begin{align}
f_2(x,v,t=0)&=\left(1+\textcolor{black}{\beta} \cos(x/2)\right)\frac{v^4}{3\sqrt{2\pi}}\exp\left(-\frac{|v|^2}{2}\right),\\
f_1(x,v,t=0)&=\frac{1}{\sqrt{2\pi}}\exp\left(-\frac{|v-1/2|^2}{2}\right).
\end{align}
So, for $\beta\neq 0$, species 2 has initially a space dependent distribution. From the computation of $\langle m(v)f_2\rangle$, we obtain $n_2(x,0)=1+\textcolor{black}{\beta}\cos(kx)$, $u_2(x,0)=0$ and $T_2(x,0)=5\left(1+\textcolor{black}{\beta}\cos(kx)\right)$. Species 1 has initially a Maxwellian distribution with $n_1(x,0)=1$, $u_1(x,0)=1/2$ and $T_1(x,0)=1$. Here, we have taken $m_2=m_1=1$. 

For $\beta=0.1$, we illustrate the initial distribution functions on figure \ref{fig:init_0p1}, $f_2(x,v,t=0)$ is presented on the left, $f_1(x,v,t=0)$ on the middle and a side view of them on the right. 

\begin{figure}[ht]
\includegraphics[angle=-90,trim=3cm 4cm 4cm 3.5cm,width=0.33\textwidth]{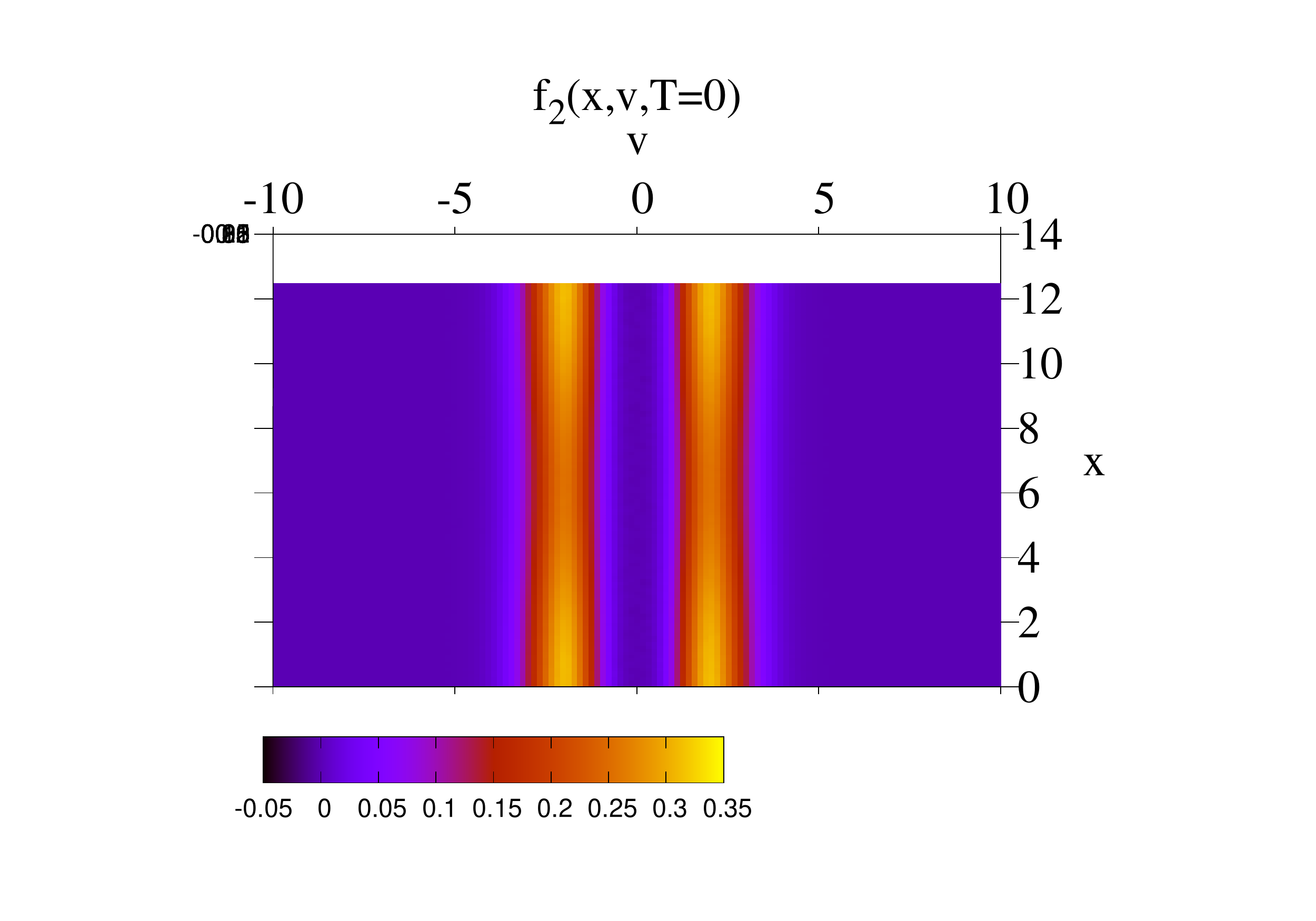}
\includegraphics[angle=-90,trim=3cm 4cm 4cm 3.5cm,width=0.33\textwidth]{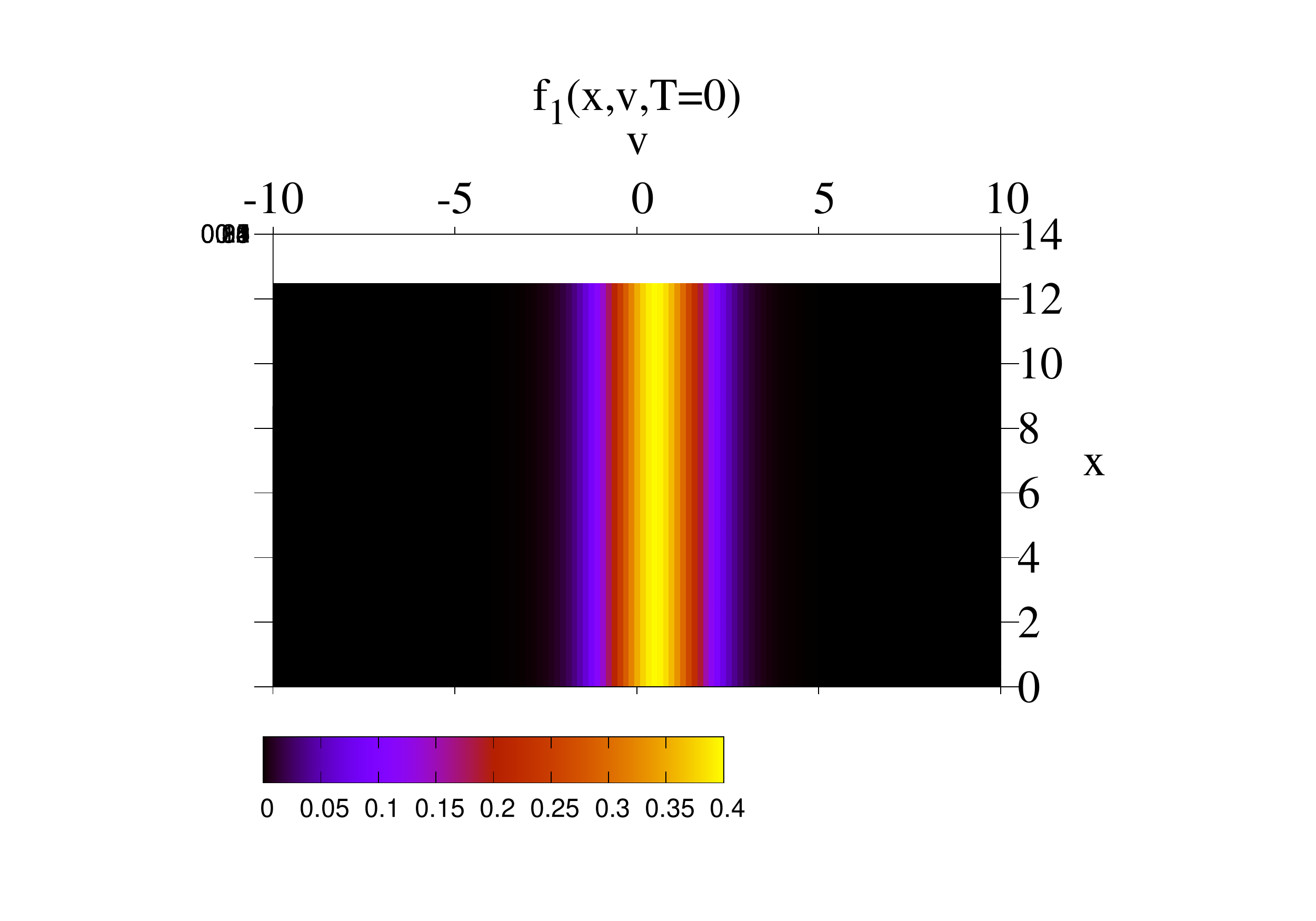}
\includegraphics[angle=-90,trim=2cm 0cm 0cm 1.cm,width=0.32\textwidth]{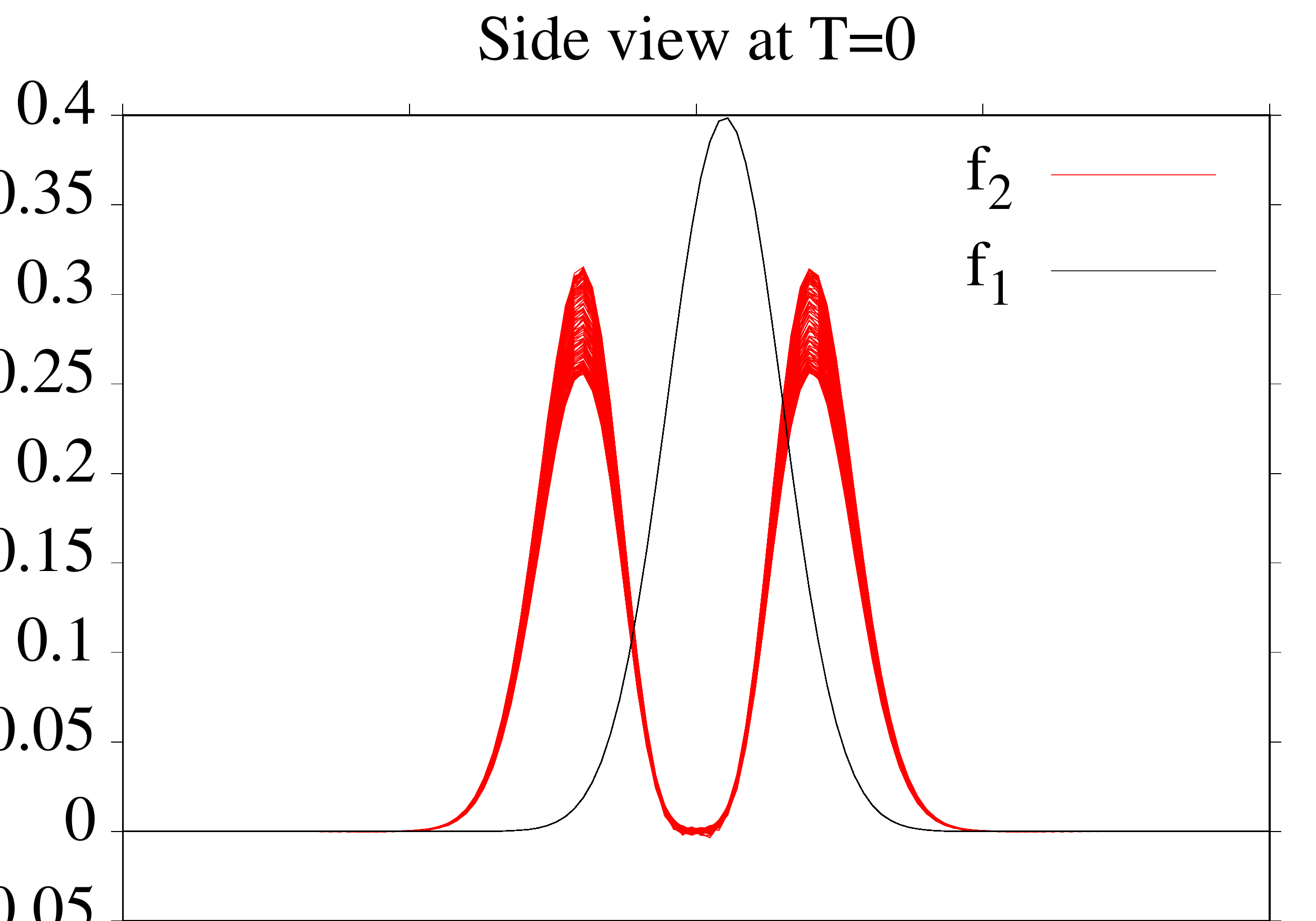}\vspace{0.1cm}
\caption{General case. Initial distribution functions for $\beta=0.1$: $f_2(x,v,t=0)$ in phase-space (left), $f_1(x,v,t=0)$ in phase-space (middle), side view of $f_2(x,v,t=0)$ and $f_1(x,v,t=0)$ (right).}
\label{fig:init_0p1}
\end{figure}

First, we propose two testcases with the following parameters: $\beta=0.1$, $N_{p_2}=N_{p_1}=5\cdot 10^5$, $N_x=128$ and $\Delta t=10^{-2}$. The first one consists in the kinetic regime $\varepsilon_1=\varepsilon_2=\tilde{\varepsilon_1}=\tilde{\varepsilon_2}=1000$, collision frequencies are small and particles do not interact a lot with each other. Distribution functions are plotted at time $T=6$ on figure \ref{fig:1000_T6} and at time $T=60$ on figure \ref{fig:1000_T60}.

\begin{figure}[ht]
\includegraphics[angle=-90,trim=3cm 4cm 4cm 3.5cm,width=0.33\textwidth]{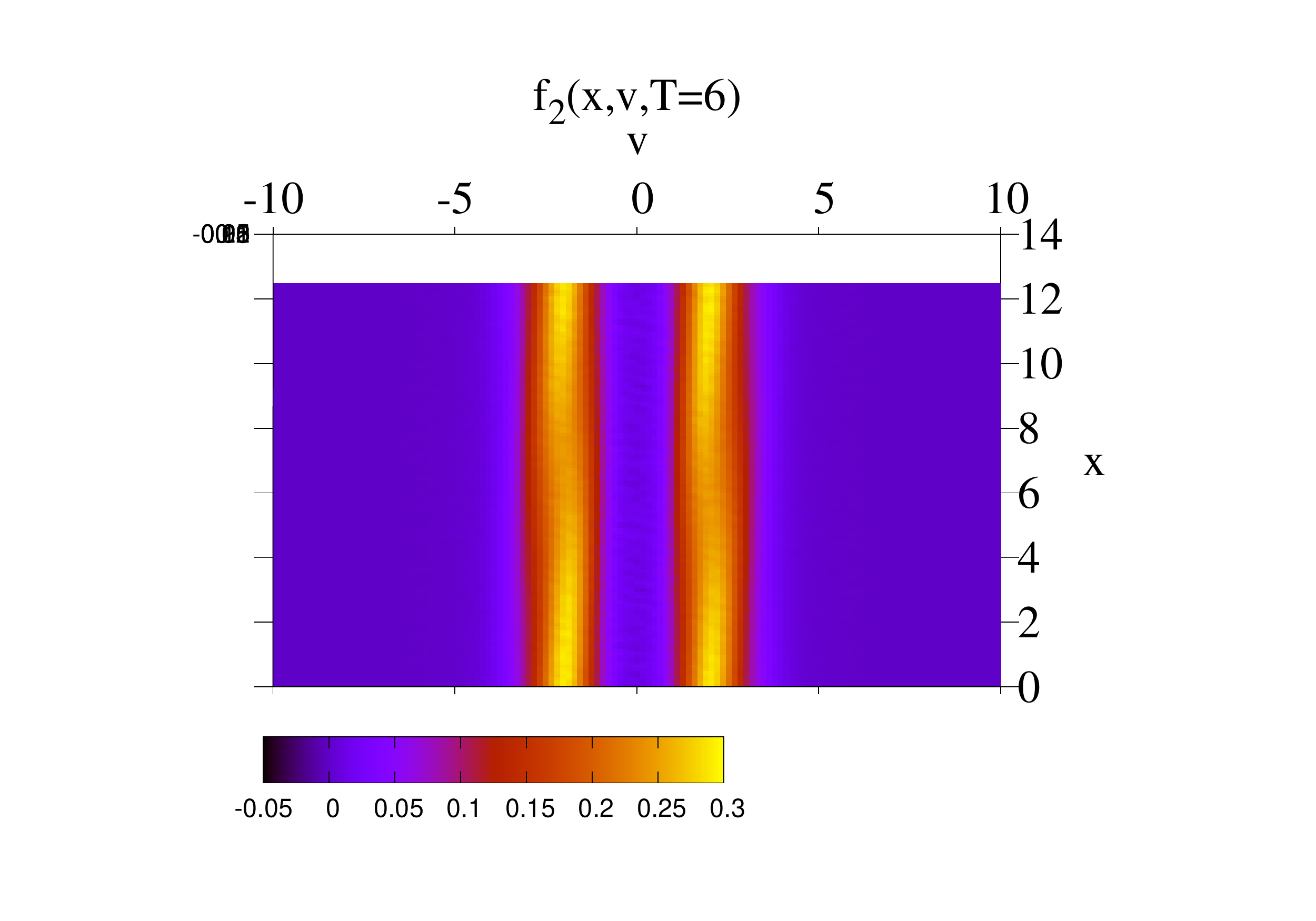}
\includegraphics[angle=-90,trim=3cm 4cm 4cm 3.5cm,width=0.33\textwidth]{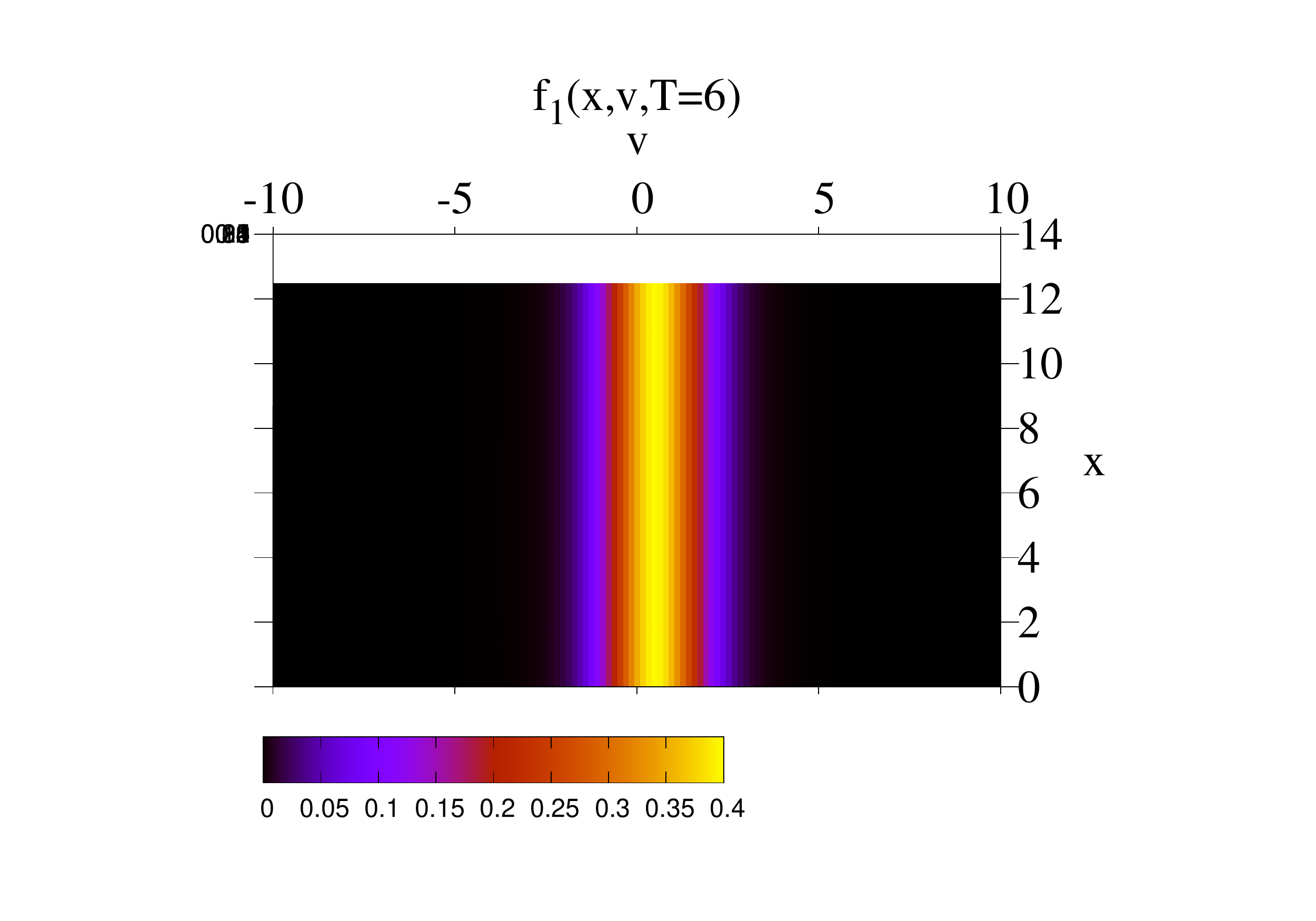}
\includegraphics[angle=-90,trim=2cm 0cm 0cm 1cm,width=0.32\textwidth]{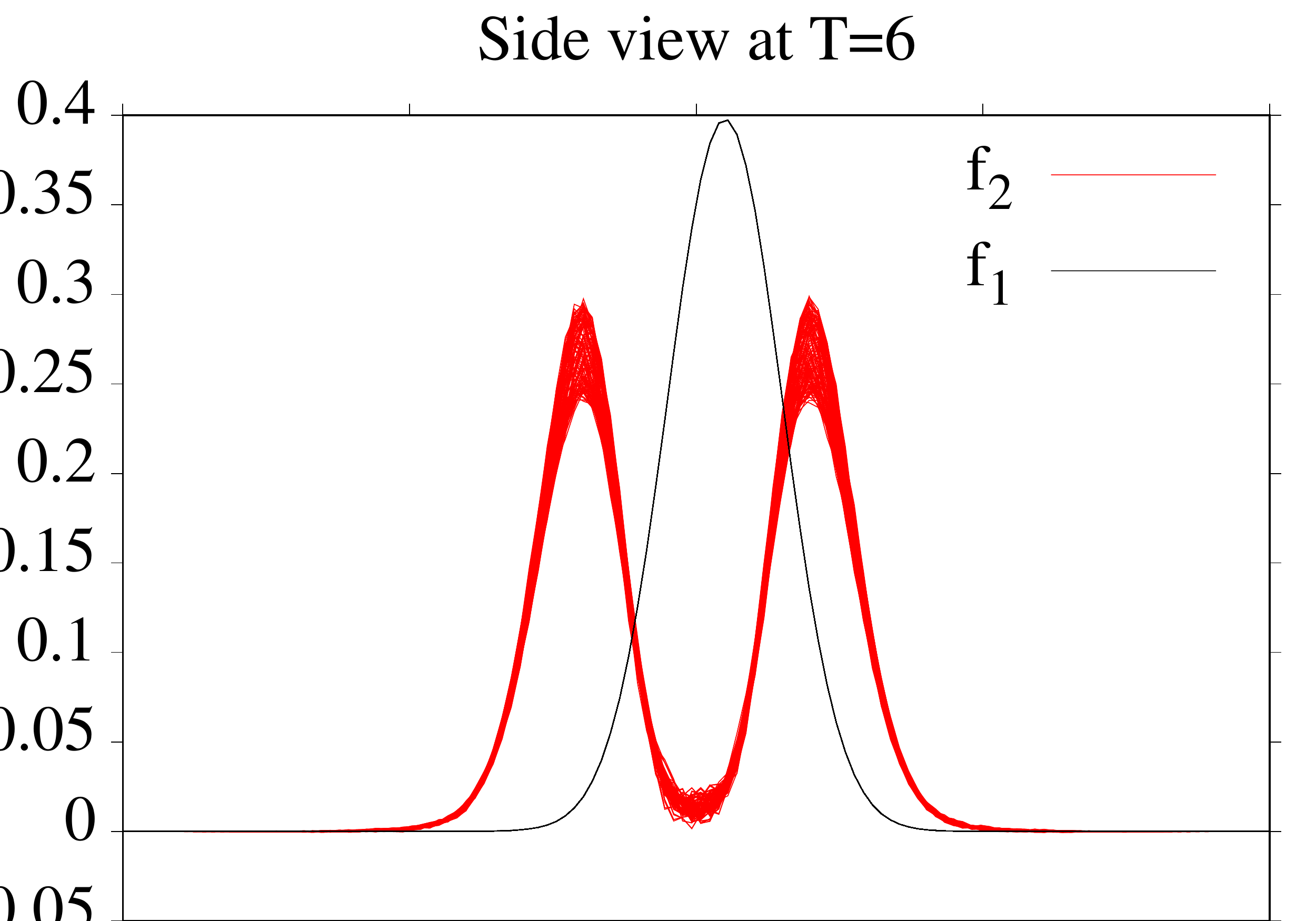}\vspace{0.1cm}
\caption{General case, $\beta=0.1$, $\varepsilon_1=\varepsilon_2=\tilde{\varepsilon_1}=\tilde{\varepsilon_2}=1000$. Distribution functions at time $T=6$: $f_2(x,v,T)$ in phase-space (left), $f_1(x,v,T)$ in phase-space (middle), side view of $f_2(x,v,T)$ and $f_1(x,v,T)$ (right).}
\label{fig:1000_T6}
\end{figure}

\begin{figure}[ht]
\includegraphics[angle=-90,trim=3cm 4cm 4cm 3.5cm,width=0.33\textwidth]{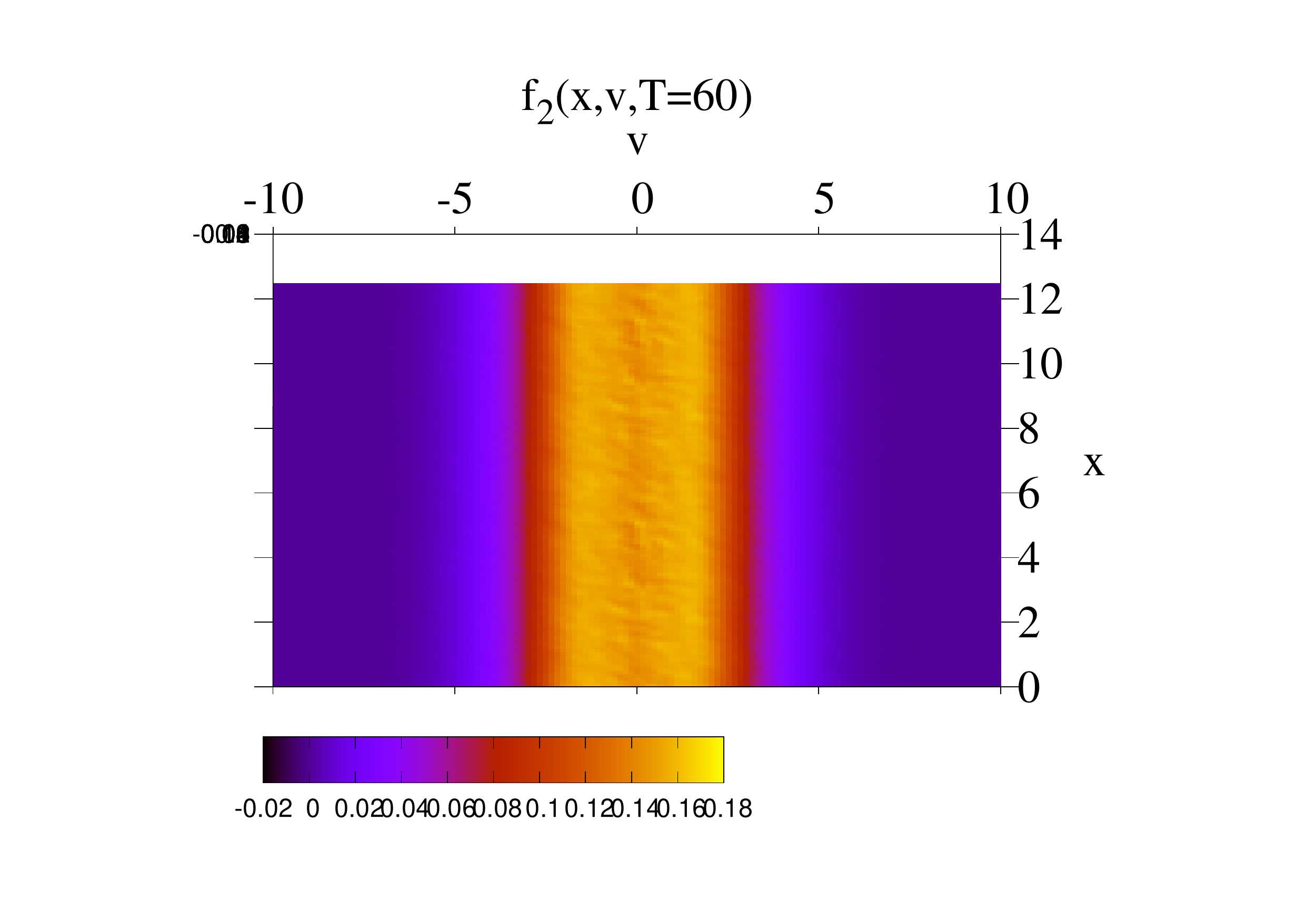}
\includegraphics[angle=-90,trim=3cm 4cm 4cm 3.5cm,width=0.33\textwidth]{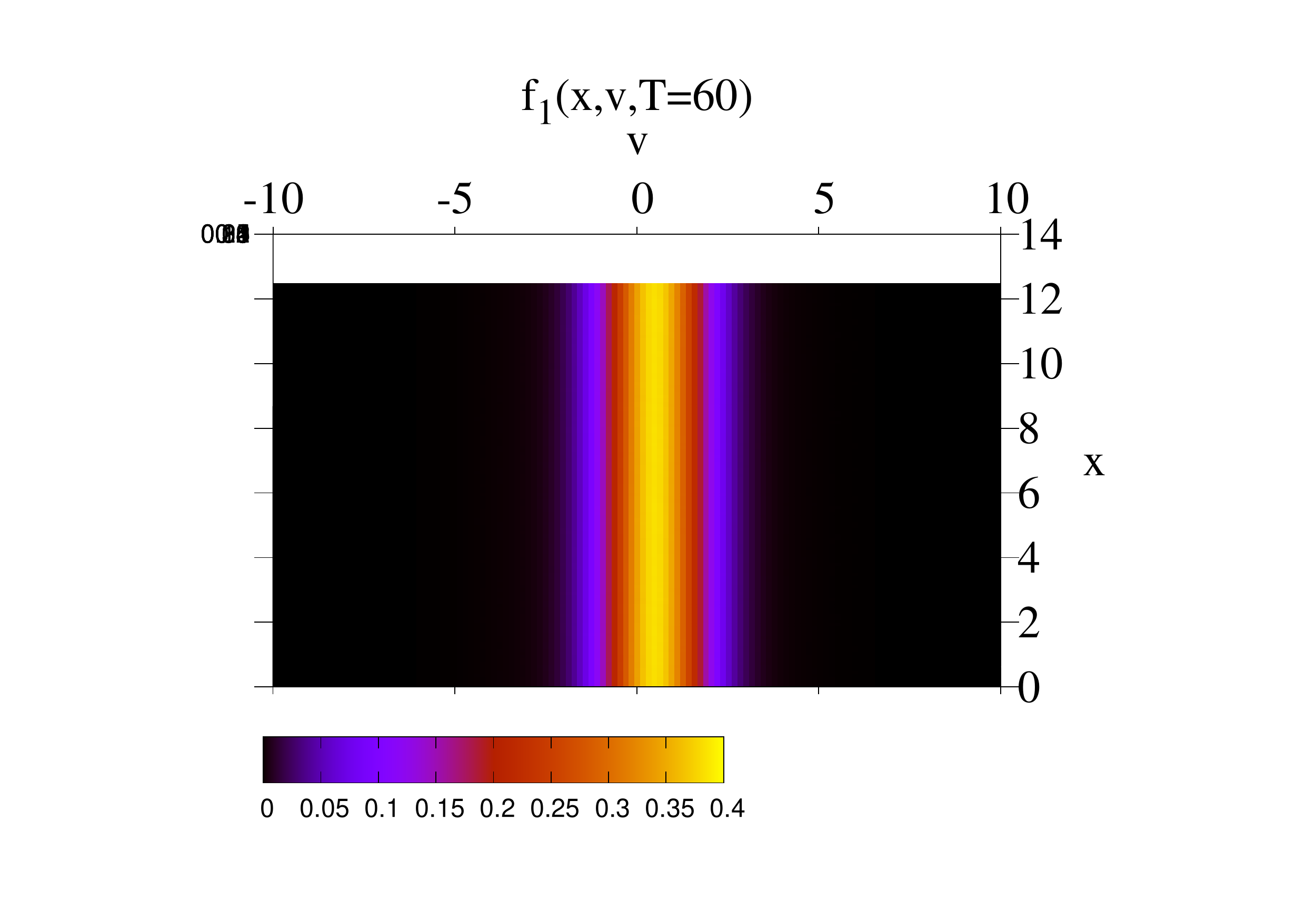}
\includegraphics[angle=-90,trim=2cm 0cm 0cm 1cm,width=0.32\textwidth]{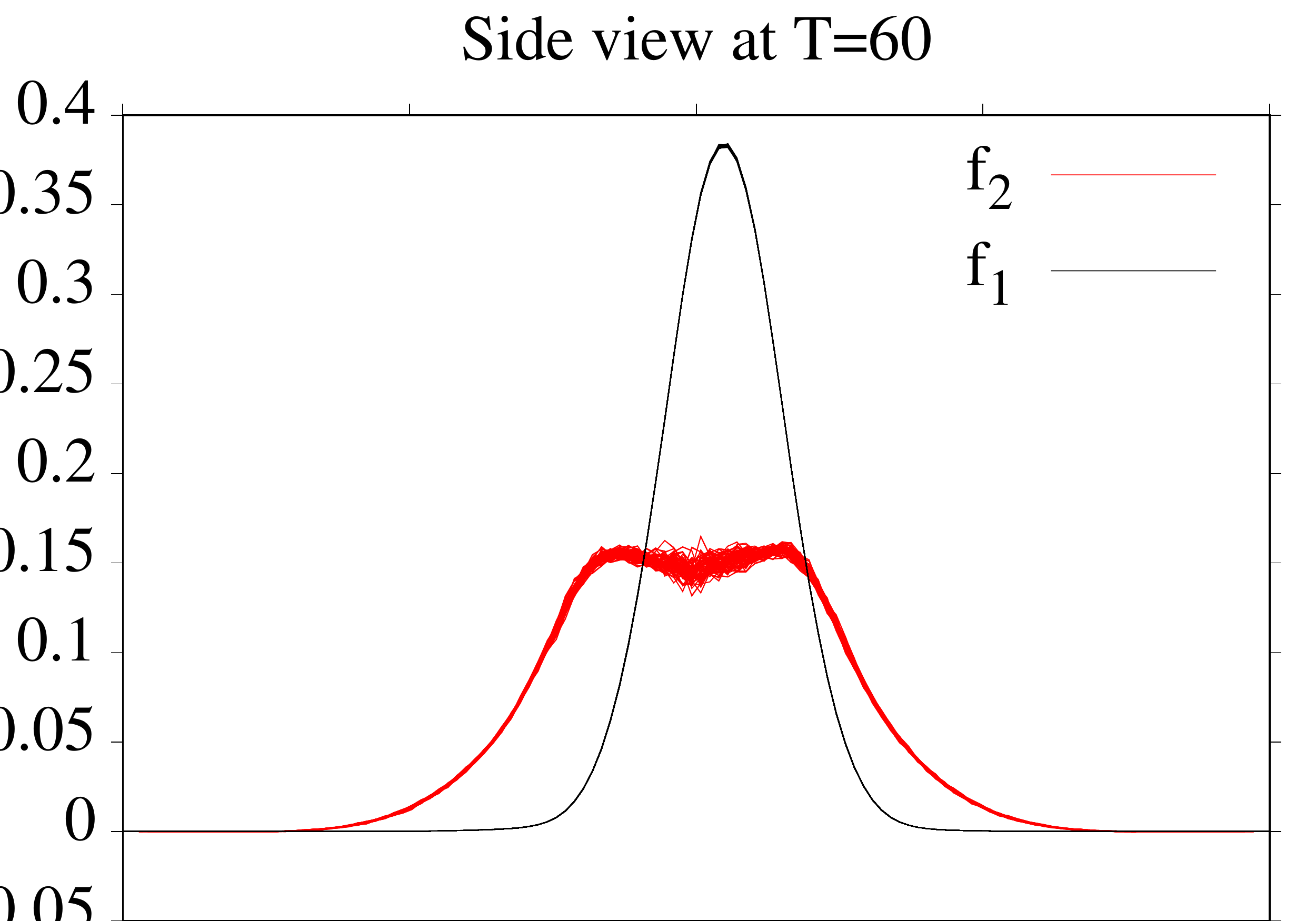}\vspace{0.1cm}
\caption{General case, $\beta=0.1$, $\varepsilon_1=\varepsilon_2=\tilde{\varepsilon_1}=\tilde{\varepsilon_2}=1000$. Distribution functions at time $T=60$: $f_2(x,v,T)$ in phase-space (left), $f_1(x,v,T)$ in phase-space (middle), side view of $f_2(x,v,T)$ and $f_1(x,v,T)$ (right).}
\label{fig:1000_T60}
\end{figure}

For these values of \textcolor{black}{Knudsen numbers}, the convergence of $f_2$ towards its equilibrium $M_2$ is slow. Moreover, even at time $T=60$, the convergence towards a global equilibrium $f_2=M_2=M_1=f_1$ can not be seen. To see the difference on macroscopic quantities, we present on figure \ref{fig:1000_vte} the evolution in time of $||u_1(x,t)-u_2(x,t)||_\infty$ and $||T_1(x,t)-T_2(x,t)||_\infty$. 
 
 \begin{figure}[!htb]
 \begin{center}
\includegraphics[angle=-90,width=0.49\textwidth]{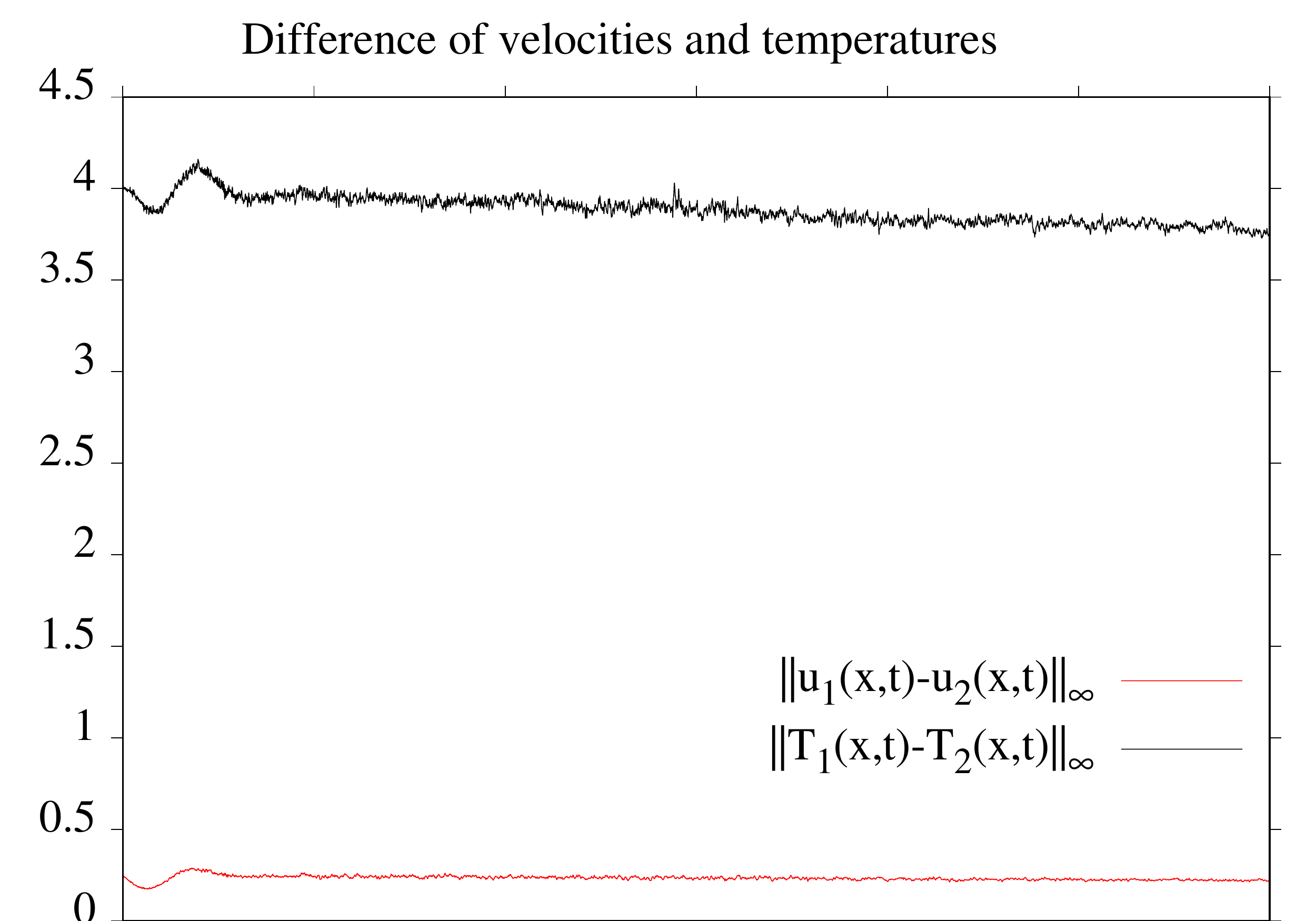}
\caption{General case, $\beta=0.1$, $\varepsilon_1=\varepsilon_2=\tilde{\varepsilon_1}=\tilde{\varepsilon_2}=1000$. Evolution in time of $||u_1(x,t)-u_2(x,t)||_\infty$ and $||T_1(x,t)-T_2(x,t)||_\infty$.
}
\label{fig:1000_vte}
\end{center}
\end{figure}

Even at time $T=60$, the velocities (resp. temperatures) of species 1 and species 2 are very different. There is no global equilibrium.  

Otherwise, these figures show that the results are affected by some numerical noise. This is a classical effect of particle methods, due to the probabilistic character of the initialisation. This noise affects macroscopic quantities because of the coupling between micro and macro equations. At fixed parameters ($\beta$, collision frequencies, $N_x$, \textit{etc.}), the noise can be reduced by increasing the number of particles. In fact, the noise means that we have not enough particles per cell to represent the distribution function ($g_{22}$ or $g_{11}$ here). But thanks to the micro-macro decomposition, we only represent the perturbations $g_{22}$ and $g_{11}$ with particles, and not the whole functions $f_2$ and $f_1$. So when $g_{22}$ (resp. $g_{11}$) becomes smaller, fewer particles are necessary. It means that if $f_2$ (resp. $f_1$) goes towards its equilibrium $M_2$ (resp. $M_1$), the required number of particles diminishes. This is the main reason for using a micro-macro scheme with a particle method for the micro part.

The second testcase consists in an intermediate regime with $\varepsilon_1=\varepsilon_2=\tilde{\varepsilon_1}=\tilde{\varepsilon_2}=1$. Collisions are enough frequent to bring the system towards a global equilibrium, as we can see on figure \ref{fig:1_T0p5} at time $T=0.5$ and then on figure \ref{fig:1_T6} at time $T=6$.

\begin{figure}[ht]
\includegraphics[angle=-90,trim=3cm 4cm 4cm 3.5cm,width=0.33\textwidth]{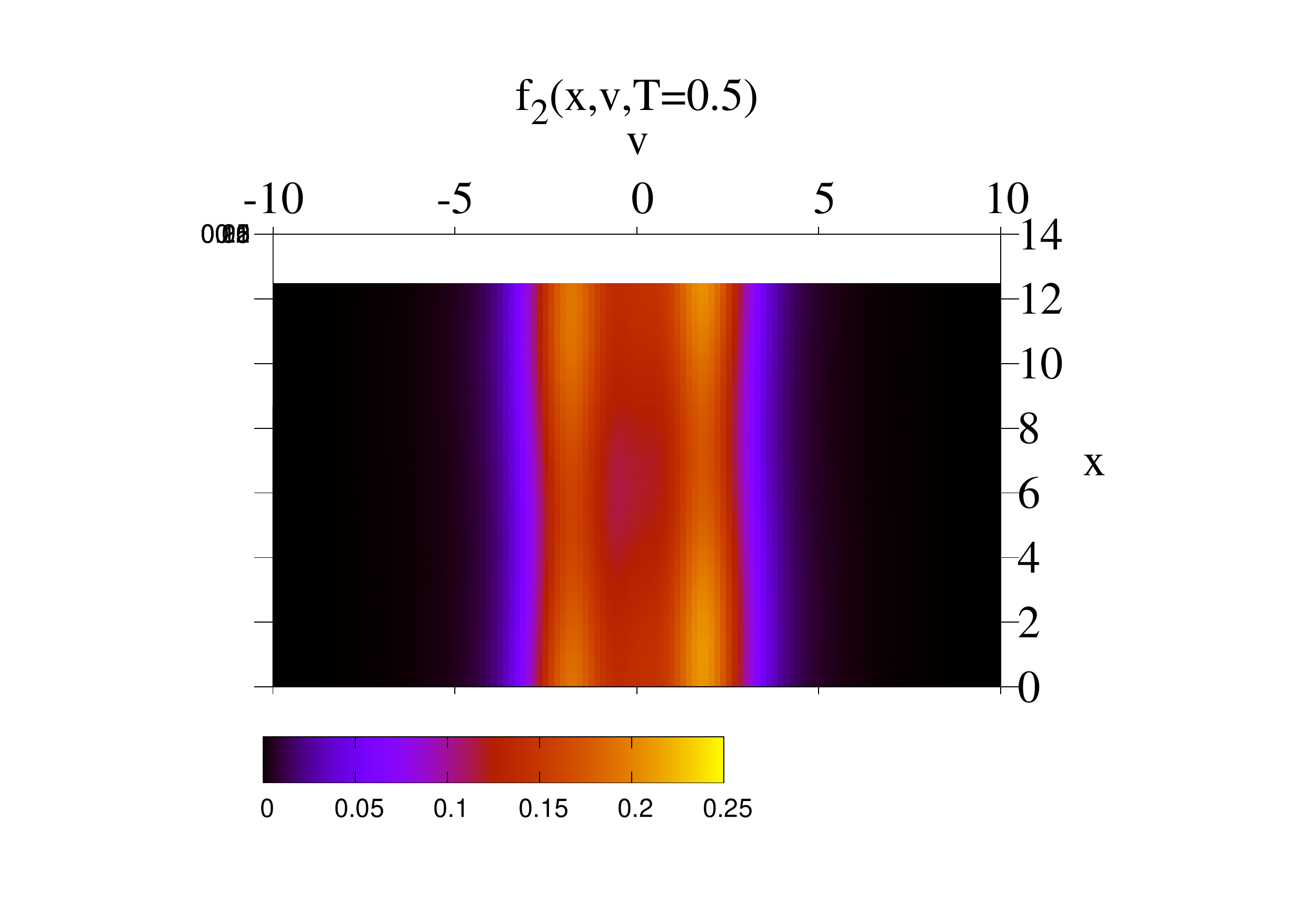}
\includegraphics[angle=-90,trim=3cm 4cm 4cm 3.5cm,width=0.33\textwidth]{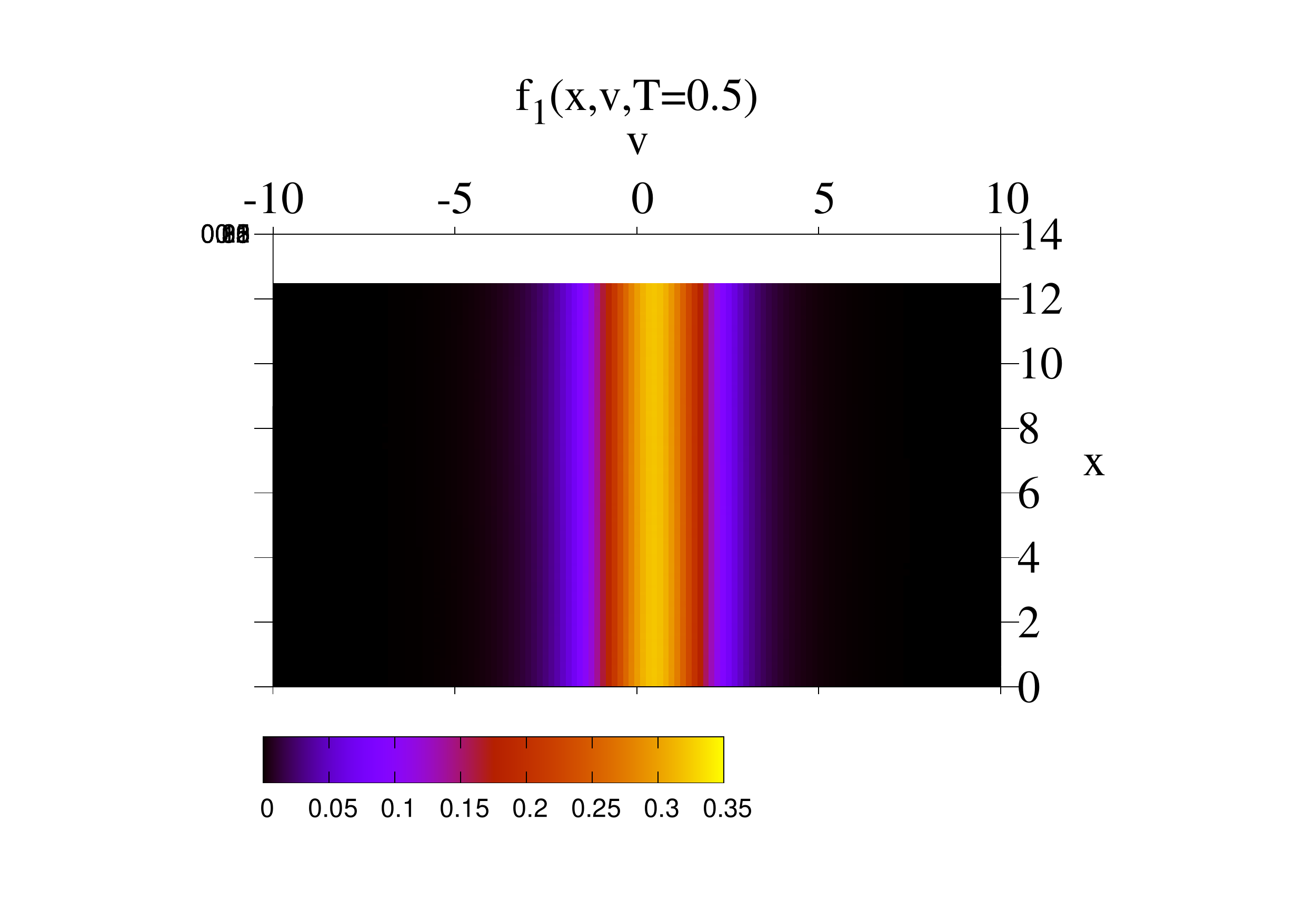}
\includegraphics[angle=-90,trim=2cm 0cm 0cm 1cm,width=0.32\textwidth]{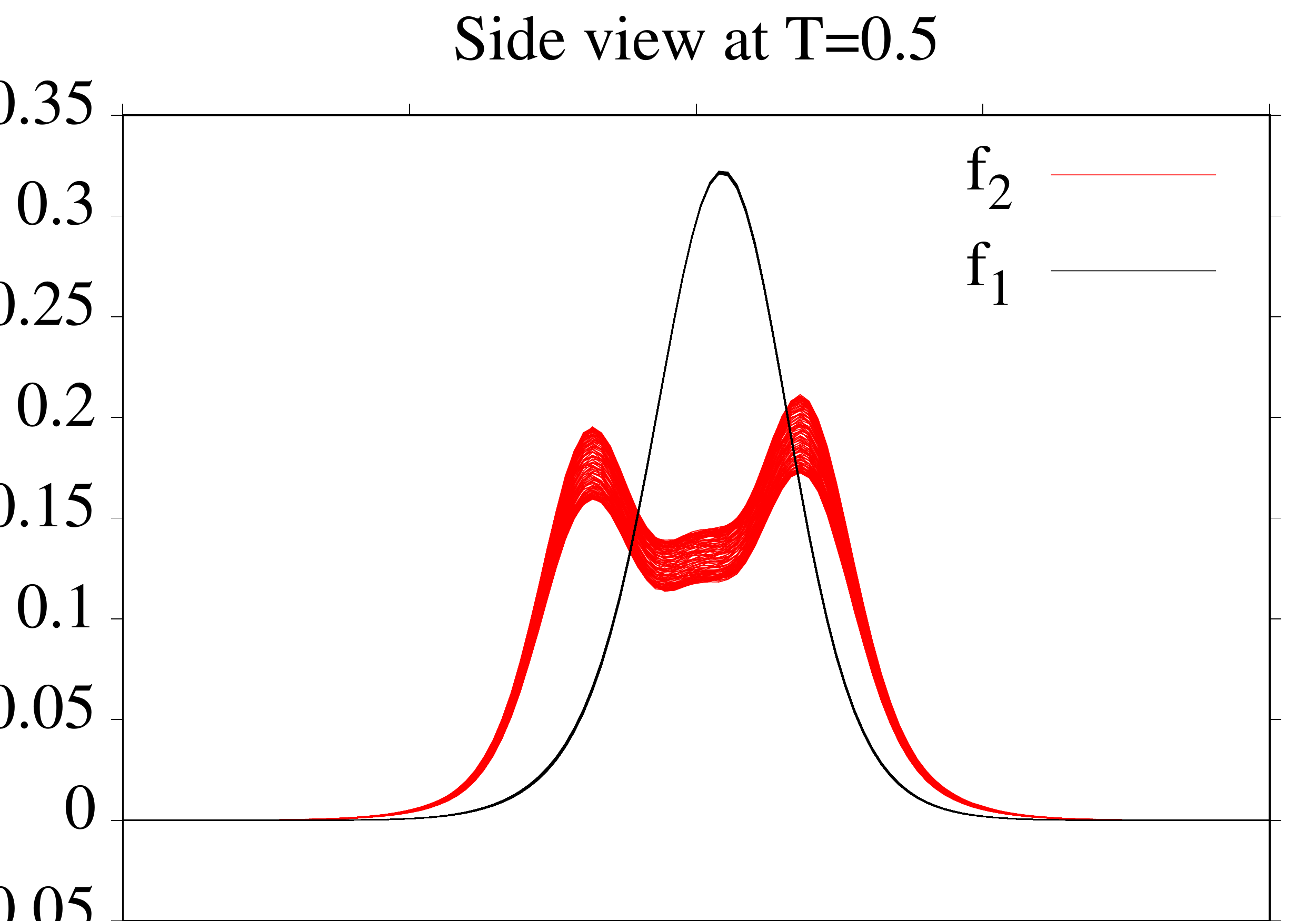}\vspace{0.1cm}
\caption{General case, $\beta=0.1$, $\varepsilon_1=\varepsilon_2=\tilde{\varepsilon_1}=\tilde{\varepsilon_2}=1$. Distribution functions at time $T=0.5$: $f_2(x,v,T)$ in phase-space (left), $f_1(x,v,T)$ in phase-space (middle), side view of $f_2(x,v,T)$ and $f_1(x,v,T)$ (right).}
\label{fig:1_T0p5}
\end{figure}

\begin{figure}[ht]
\includegraphics[angle=-90,trim=3cm 4cm 4cm 3.5cm,width=0.33\textwidth]{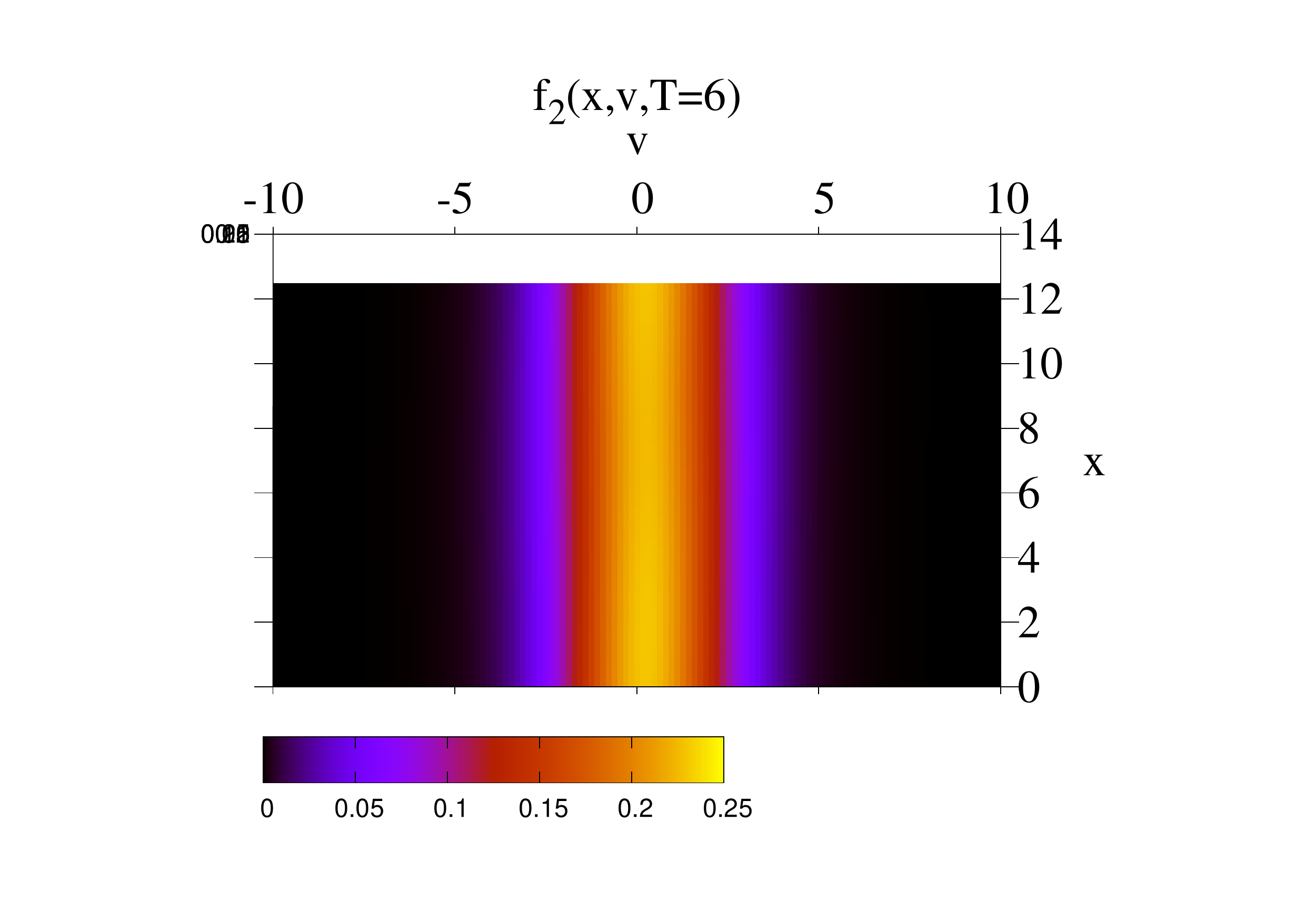}
\includegraphics[angle=-90,trim=3cm 4cm 4cm 3.5cm,width=0.33\textwidth]{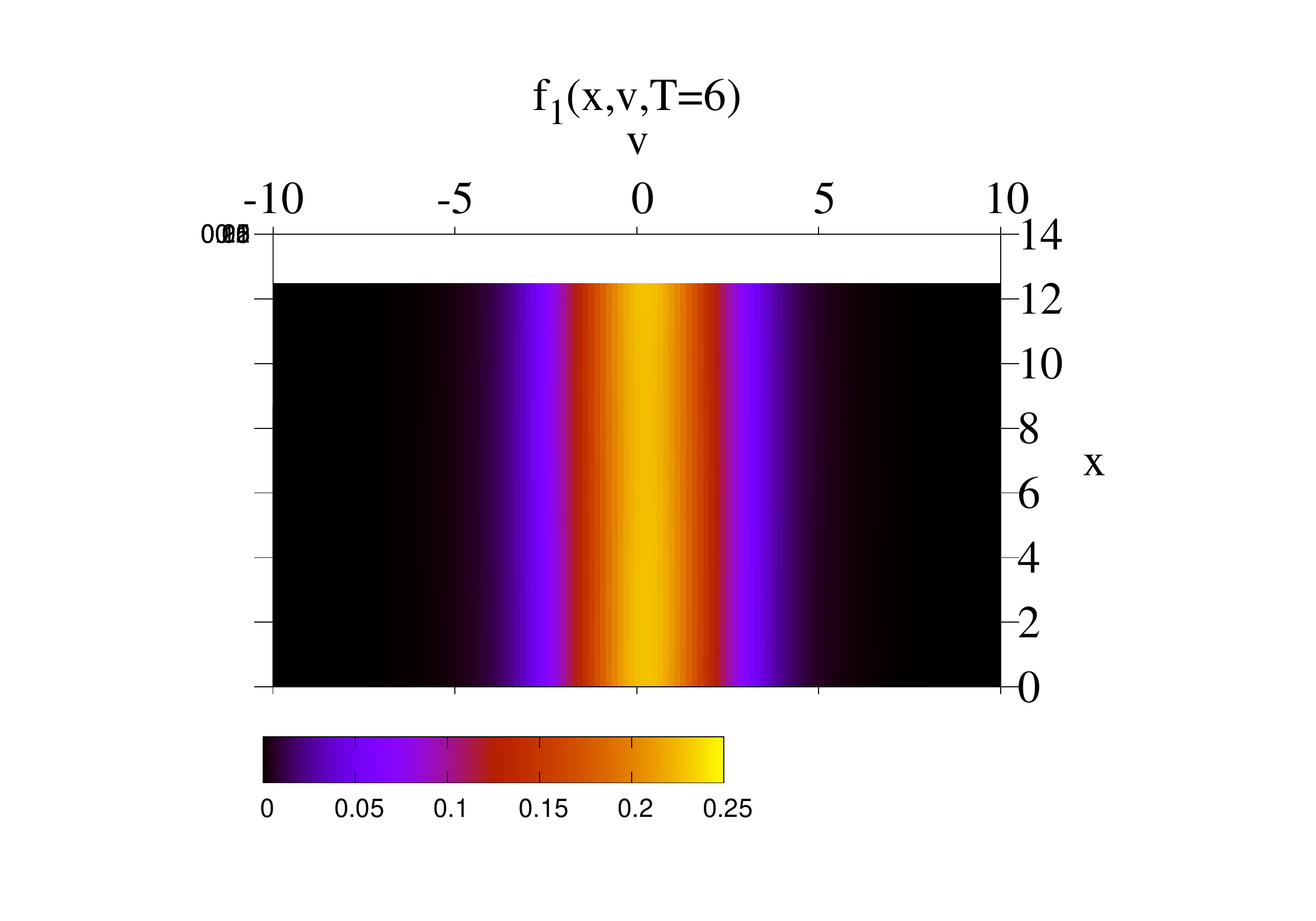}
\includegraphics[angle=-90,trim=2cm 0cm 0cm 1cm,width=0.32\textwidth]{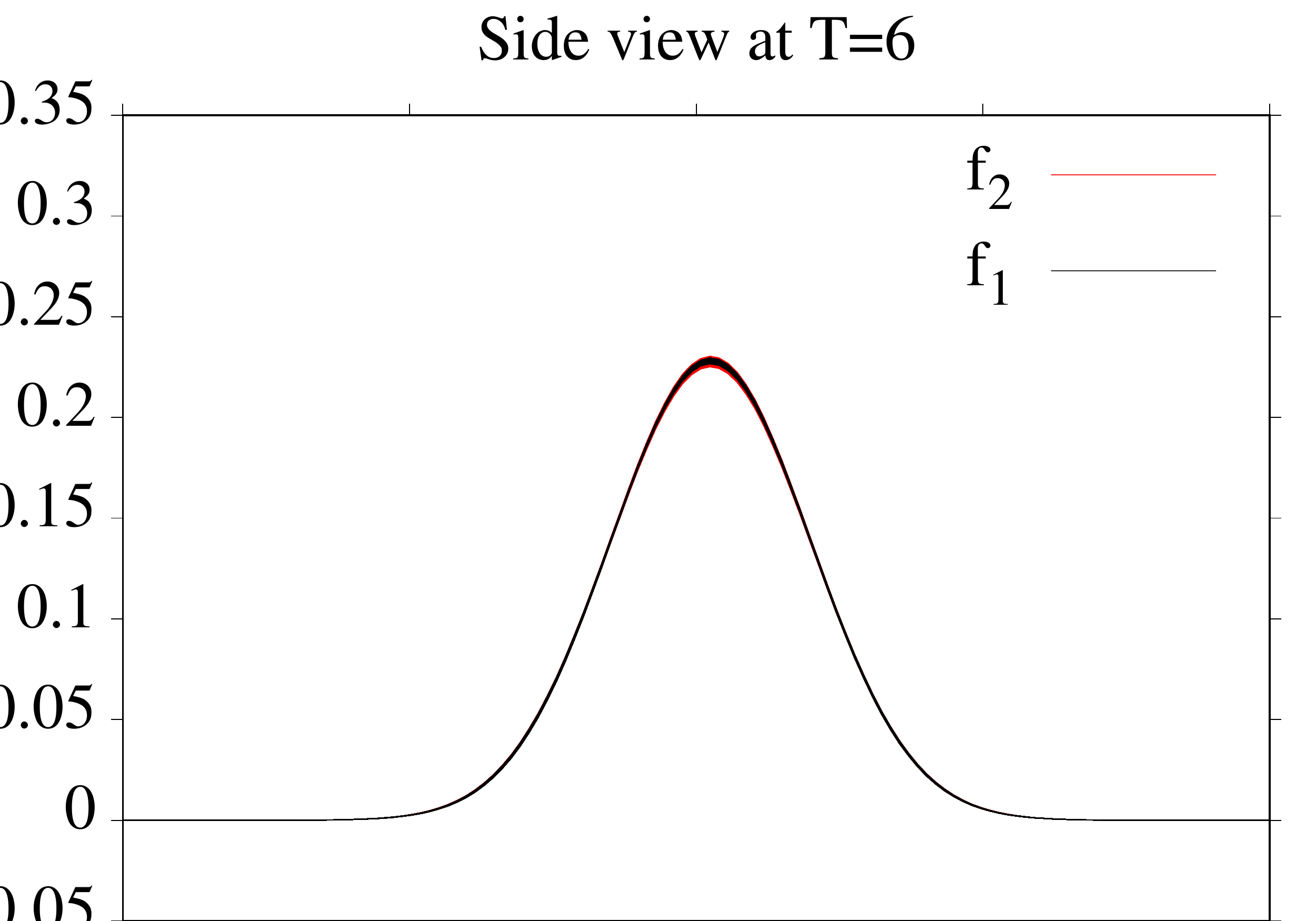}\vspace{0.1cm}
\caption{General case, $\beta=0.1$, $\varepsilon_1=\varepsilon_2=\tilde{\varepsilon_1}=\tilde{\varepsilon_2}=1$. Distribution functions at time $T=6$: $f_2(x,v,T)$ in phase-space (left), $f_1(x,v,T)$ in phase-space (middle), side view of $f_2(x,v,T)$ and $f_1(x,v,T)$ (right).}
\label{fig:1_T6}
\end{figure}

The evolution in time of $||u_1(x,t)-u_2(x,t)||_\infty$ and $||T_1(x,t)-T_2(x,t)||_\infty$, presented on figure \ref{fig:1_vte}, confirms the convergence towards a global equilibrium.

 \begin{figure}[ht]
 \begin{center}
\includegraphics[angle=-90,width=0.49\textwidth]{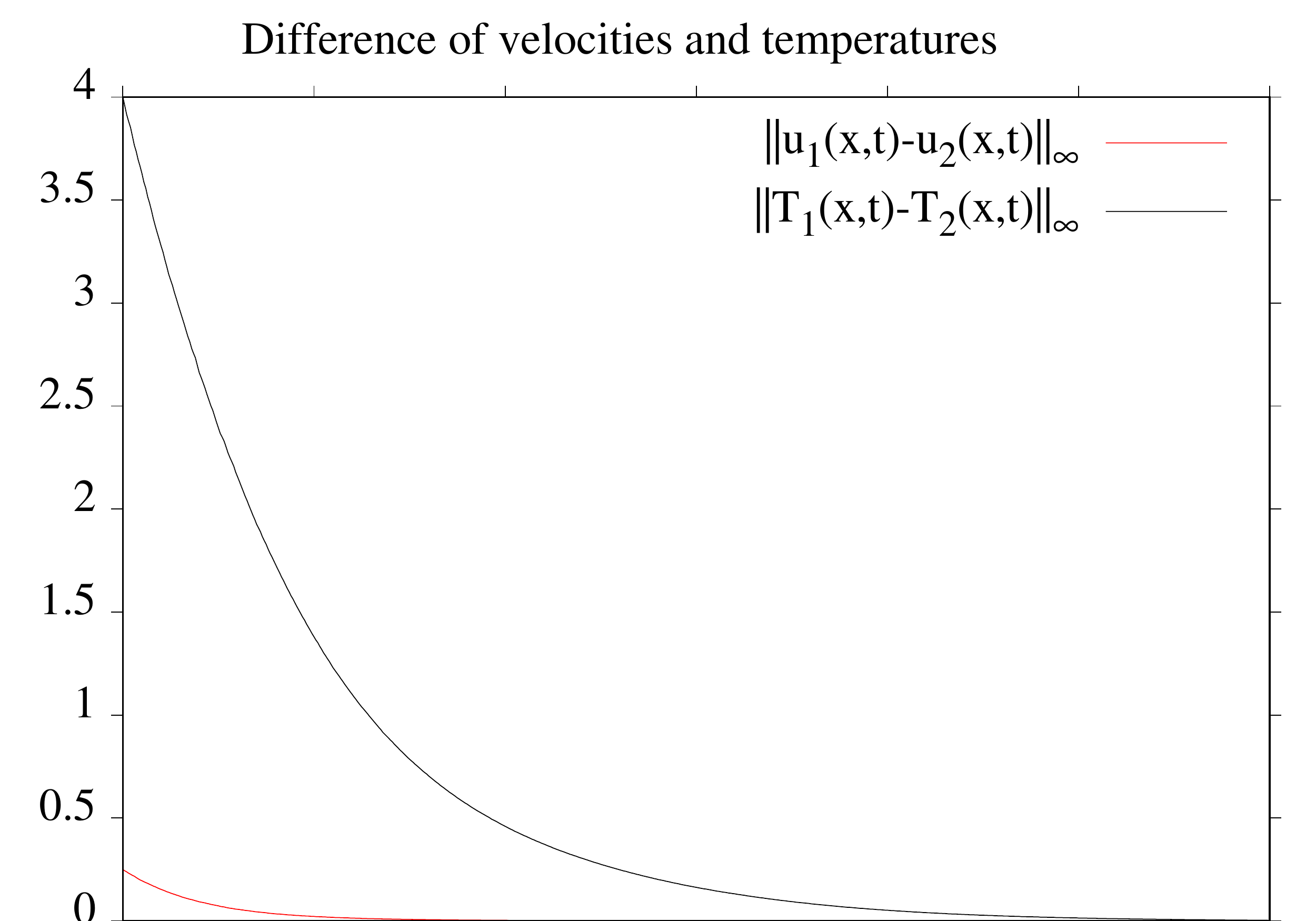}
\caption{General case, $\beta=0.1$, $\varepsilon_1=\varepsilon_2=\tilde{\varepsilon_1}=\tilde{\varepsilon_2}=1$. Evolution in time of $||u_1(x,t)-u_2(x,t)||_\infty$ and $||T_1(x,t)-T_2(x,t)||_\infty$.
}
\label{fig:1_vte}
\end{center}
\end{figure}

We expect that the convergence towards a global equilibrium is faster when collisions are more frequent. We will highlight this in the following test. For a convergence of the densities in short time, we now take $\beta=10^{-2}$. Other parameters are unchanged and particularly we still have \textcolor{black}{$N_{p_2}=N_{p_1}=5\cdot 10^5$, $N_x=128$, $\Delta t=10^{-2}$ and} $n_2(x,0)=1+\beta\cos(kx)$, $u_2(x,0)=0$, $T_2(x,0)=5\left(1+\beta\cos(kx)\right)$, $n_1(x,0)=1$, $u_1(x,0)=1/2$ and $T_1(x,0)=1$. For $\varepsilon_1=\varepsilon_2=\tilde{\varepsilon_1}=\tilde{\varepsilon_2}=10^{-2}$, distribution functions are plotted on figure \ref{fig:1e-2_T0p01} at time $T=0.01$ and then on figure \ref{fig:1e-2_T0p1} at time $T=0.1$. 

\begin{figure}[ht]
\includegraphics[angle=-90,trim=3cm 4cm 4cm 3.5cm,width=0.33\textwidth]{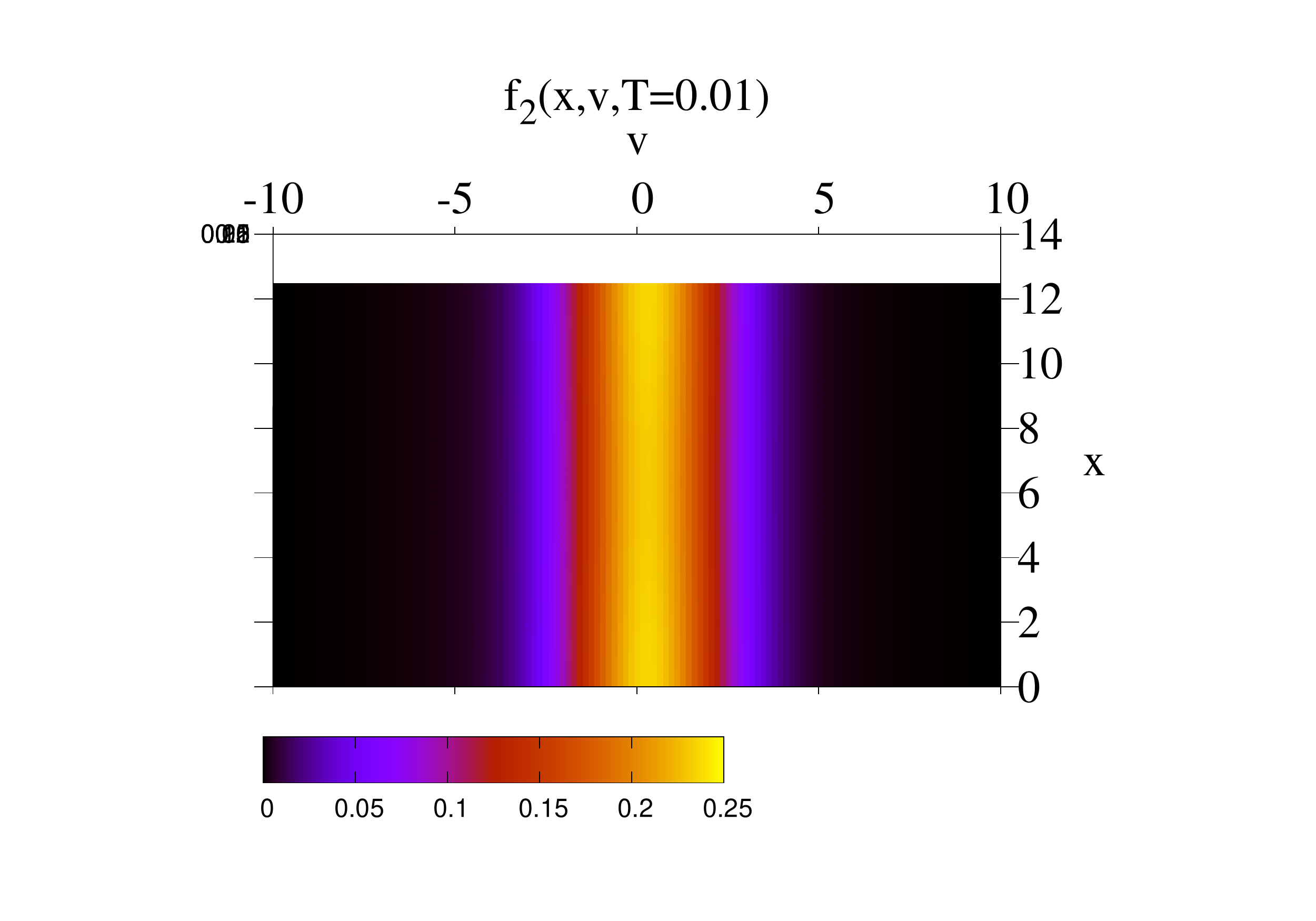}
\includegraphics[angle=-90,trim=3cm 4cm 4cm 3.5cm,width=0.33\textwidth]{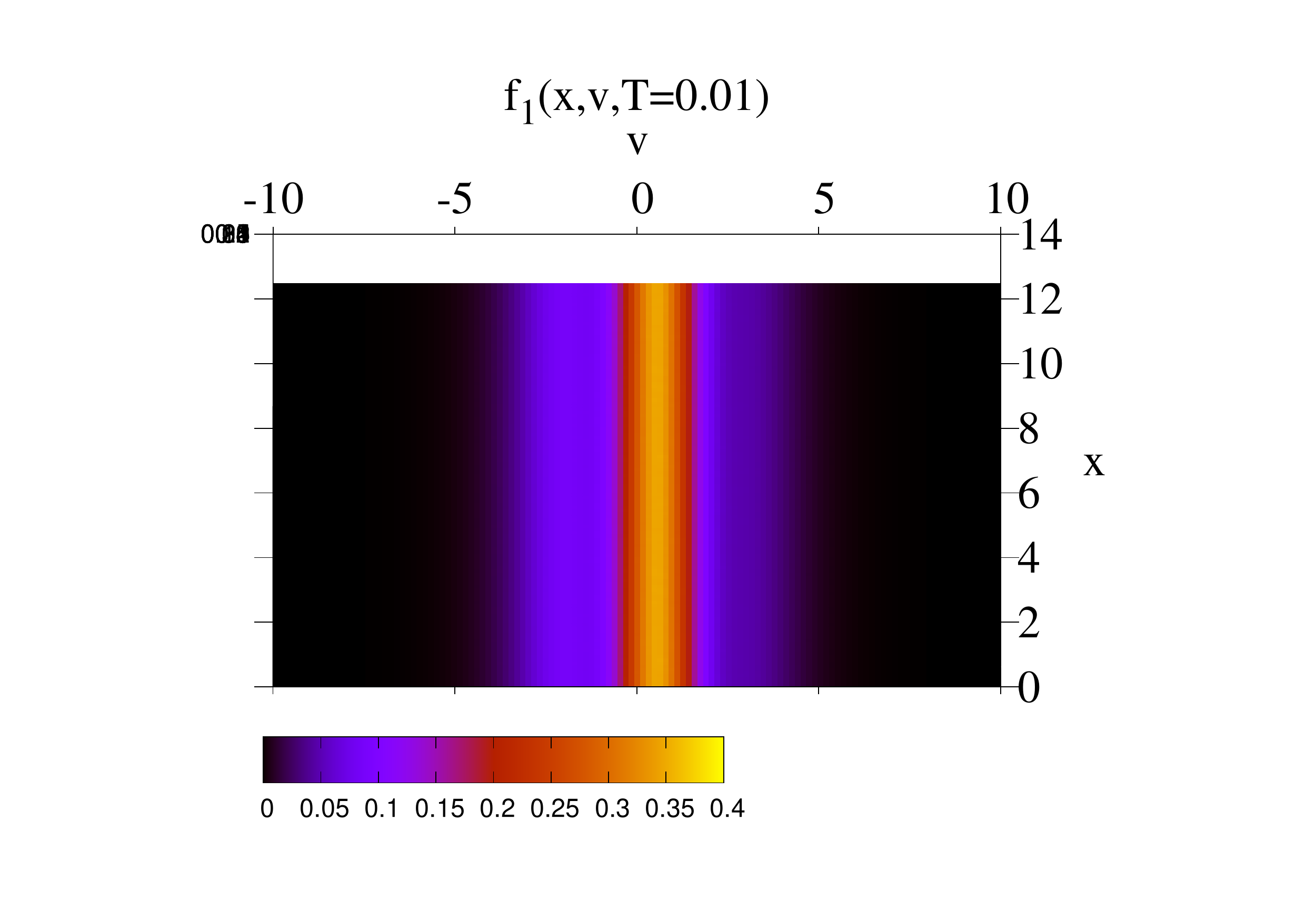}
\includegraphics[angle=-90,trim=2cm 0cm 0cm 1cm,width=0.32\textwidth]{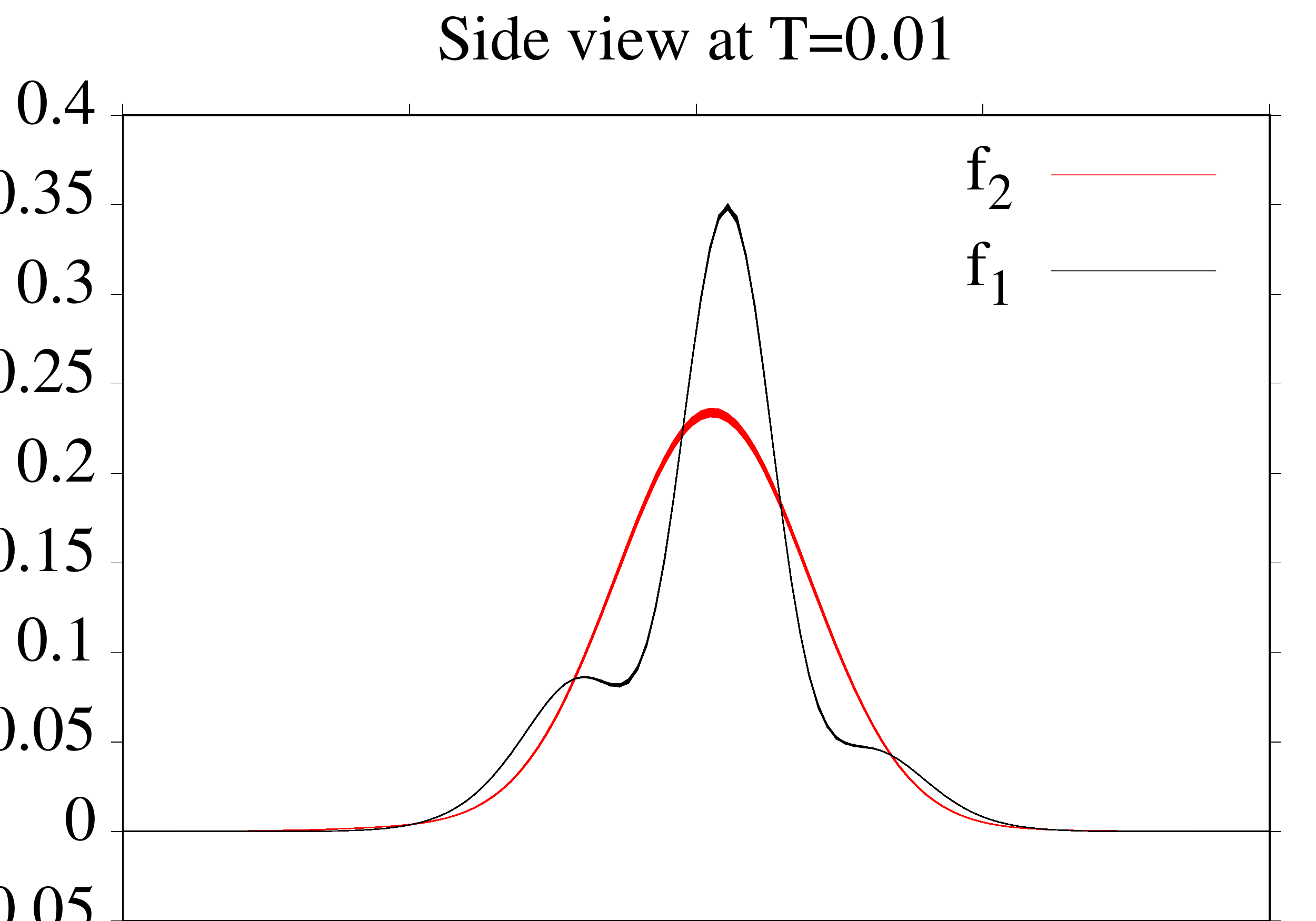}\vspace{0.1cm}
\caption{General case, $\beta=10^{-2}$, $\varepsilon_1=\varepsilon_2=\tilde{\varepsilon_1}=\tilde{\varepsilon_2}=10^{-2}$. Distribution functions at time $T=0.01$: $f_2(x,v,T)$ in phase-space (left), $f_1(x,v,T)$ in phase-space (middle), side view of $f_2(x,v,T)$ and $f_1(x,v,T)$ (right).}
\label{fig:1e-2_T0p01}
\end{figure}

\begin{figure}[ht]
\includegraphics[angle=-90,trim=3cm 4cm 4cm 3.5cm,width=0.33\textwidth]{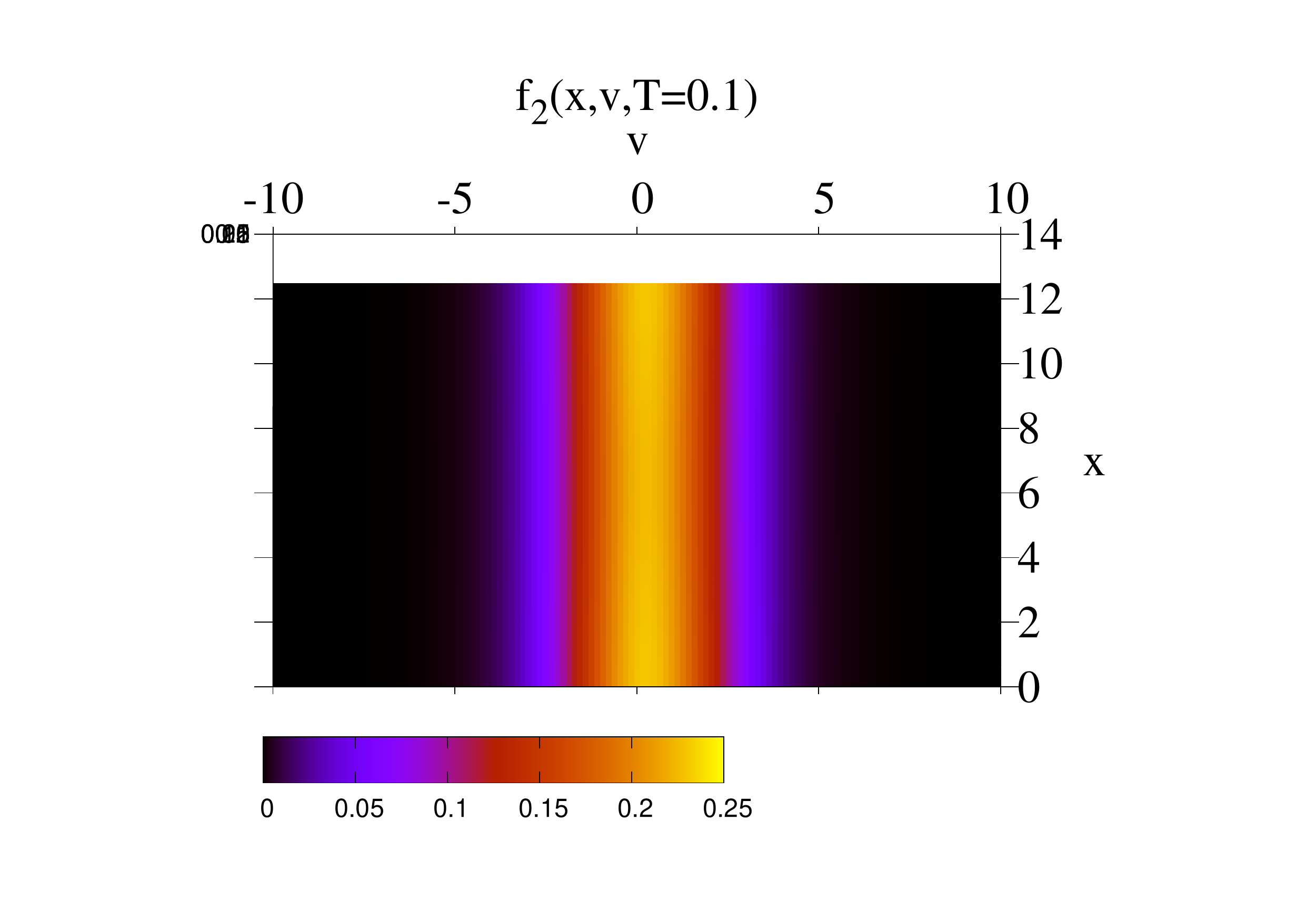}
\includegraphics[angle=-90,trim=3cm 4cm 4cm 3.5cm,width=0.33\textwidth]{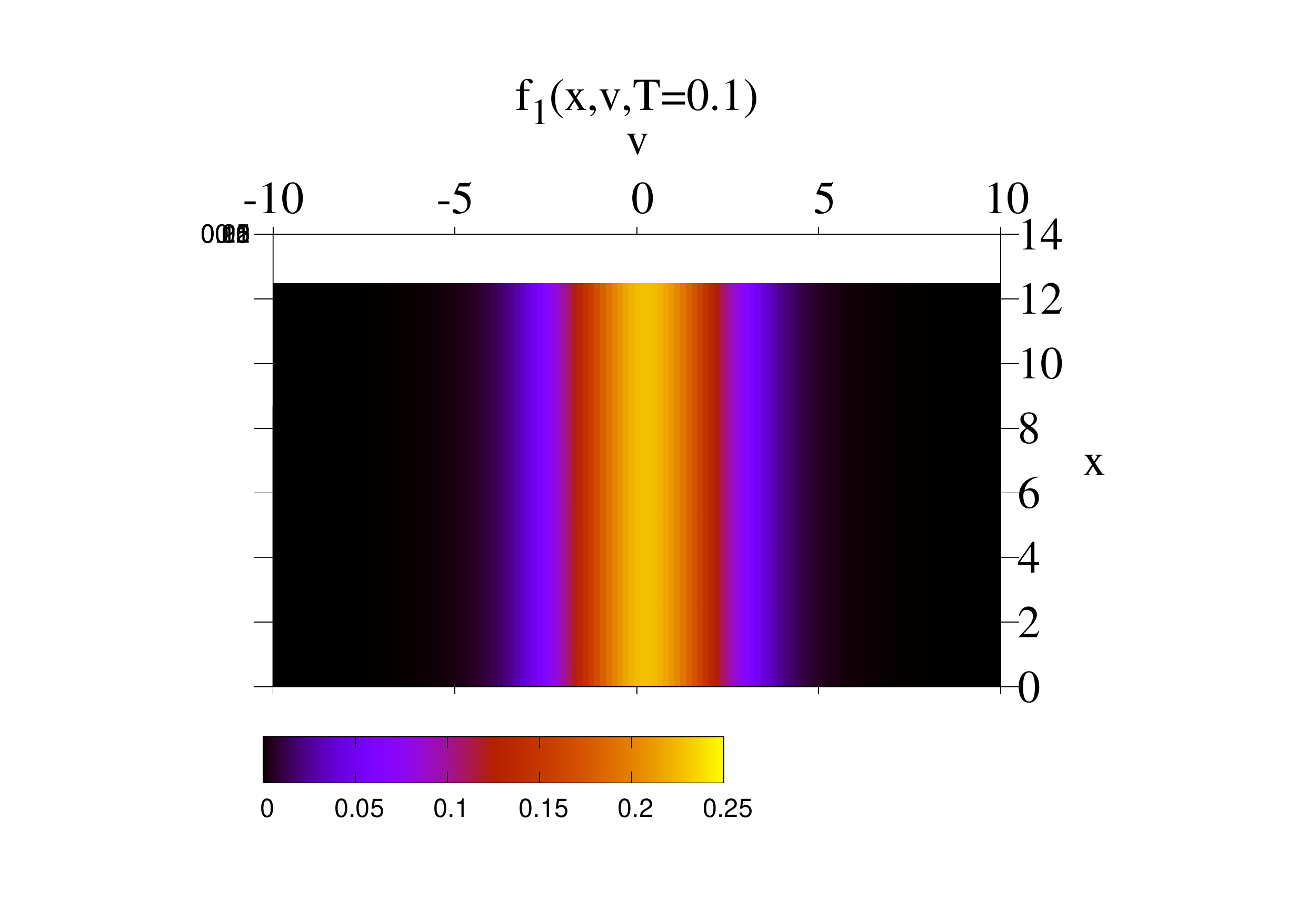}
\includegraphics[angle=-90,trim=2cm 0cm 0cm 1cm,width=0.32\textwidth]{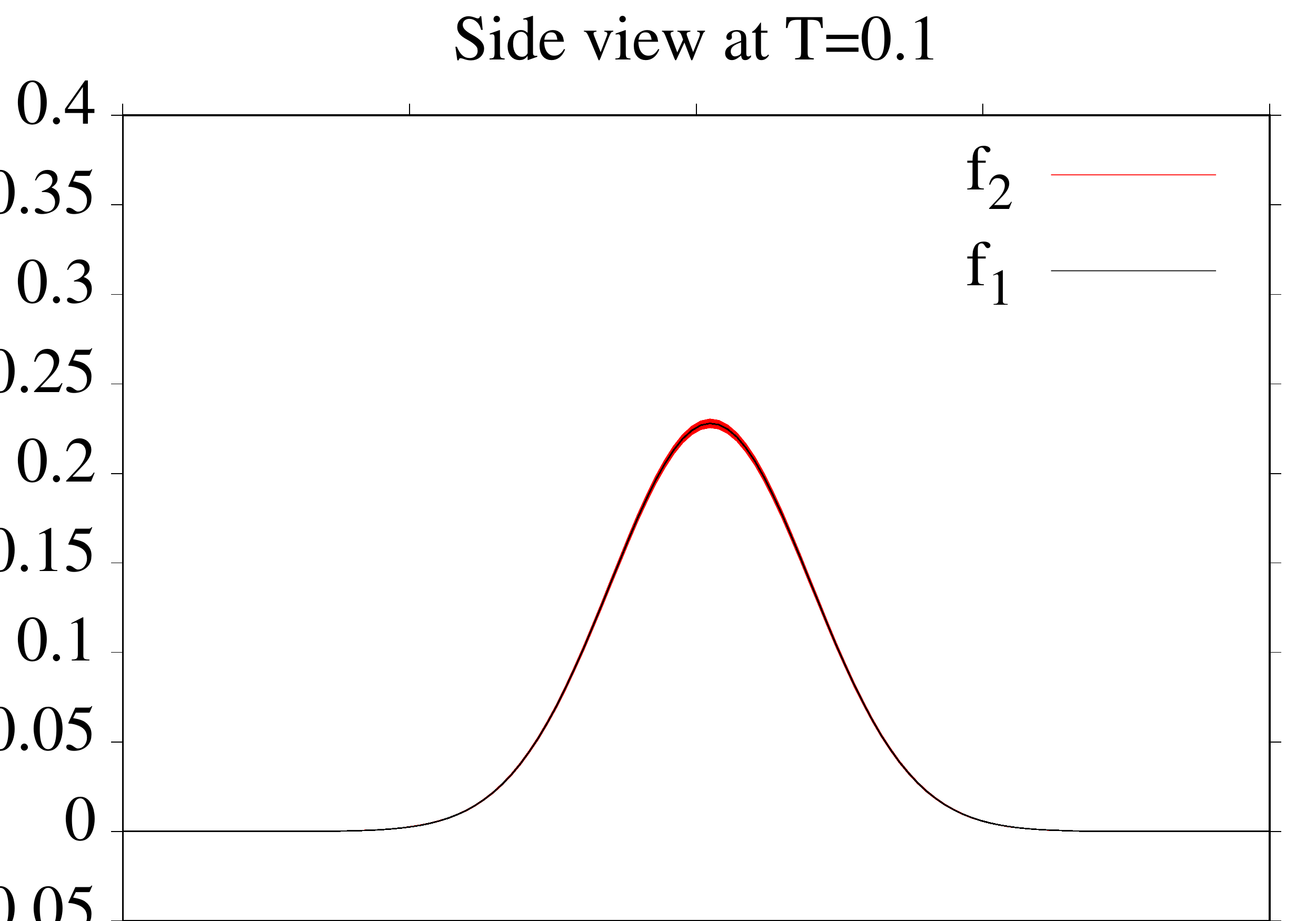}\vspace{0.1cm}
\caption{General case, $\beta=10^{-2}$, $\varepsilon_1=\varepsilon_2=\tilde{\varepsilon_1}=\tilde{\varepsilon_2}=10^{-2}$. Distribution functions at time $T=0.1$: $f_2(x,v,T)$ in phase-space (left), $f_1(x,v,T)$ in phase-space (middle), side view of $f_2(x,v,T)$ and $f_1(x,v,T)$ (right).}
\label{fig:1e-2_T0p1}
\end{figure}

We can see that the distribution functions are very close from each other at $T=0.1$. The evolution in time of $||u_1(x,t)-u_2(x,t)||_\infty$ and $||T_1(x,t)-T_2(x,t)||_\infty$, presented on figure \ref{fig:1e-2_vte}, confirms the convergence of velocities and temperatures.

 \begin{figure}[ht]
 \begin{center}
\includegraphics[angle=-90,width=0.49\textwidth]{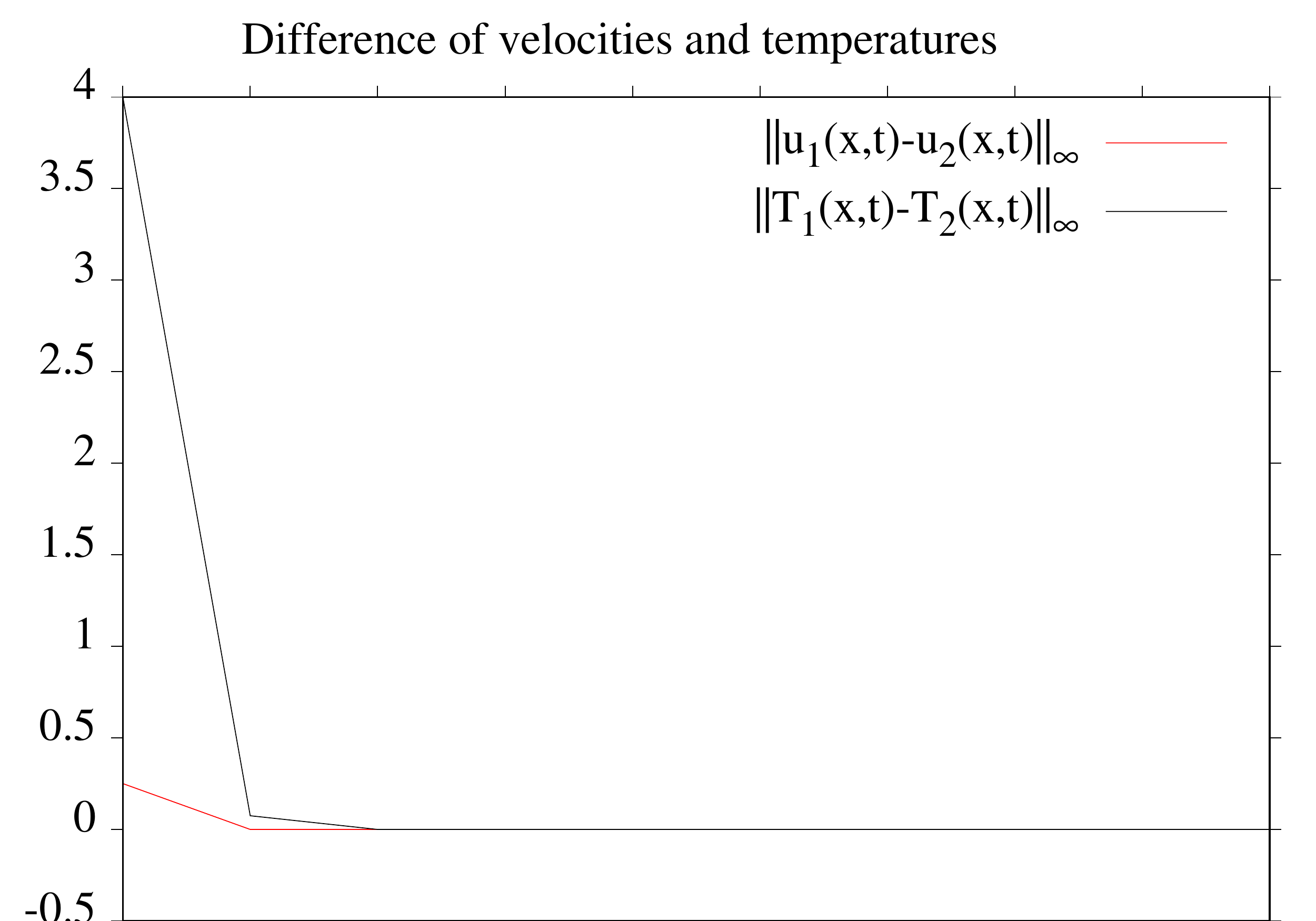}
\caption{General case, $\beta=10^{-2}$, $\varepsilon_1=\varepsilon_2=\tilde{\varepsilon_1}=\tilde{\varepsilon_2}=10^{-2}$. Evolution in time of $||u_1(x,t)-u_2(x,t)||_\infty$ and $||T_1(x,t)-T_2(x,t)||_\infty$.
}
\label{fig:1e-2_vte}
\end{center}
\end{figure}

Now, we propose a testcase in which the collisions between particles of the same species are frequent, whereas collisions between species 1 and species 2 are infrequent. More precisely, we take $\textcolor{black}{\beta}=10^{-2}$, \textcolor{black}{$N_{p_2}=N_{p_1}=5\cdot 10^5$}, $N_x=128$, $\Delta t=10^{-2}$, $\varepsilon_1=\varepsilon_2=10^{-2}$ and $\tilde{\varepsilon_1}=\tilde{\varepsilon_2}=1000$. Distribution functions are presented on figure \ref{fig:1000_1e-2_T0p01} at time $T=0.01$ and then on figure \ref{fig:1000_1e-2_T6} at time $T=6$.

\begin{figure}[ht]
\includegraphics[angle=-90,trim=3cm 4cm 4cm 3.5cm,width=0.33\textwidth]{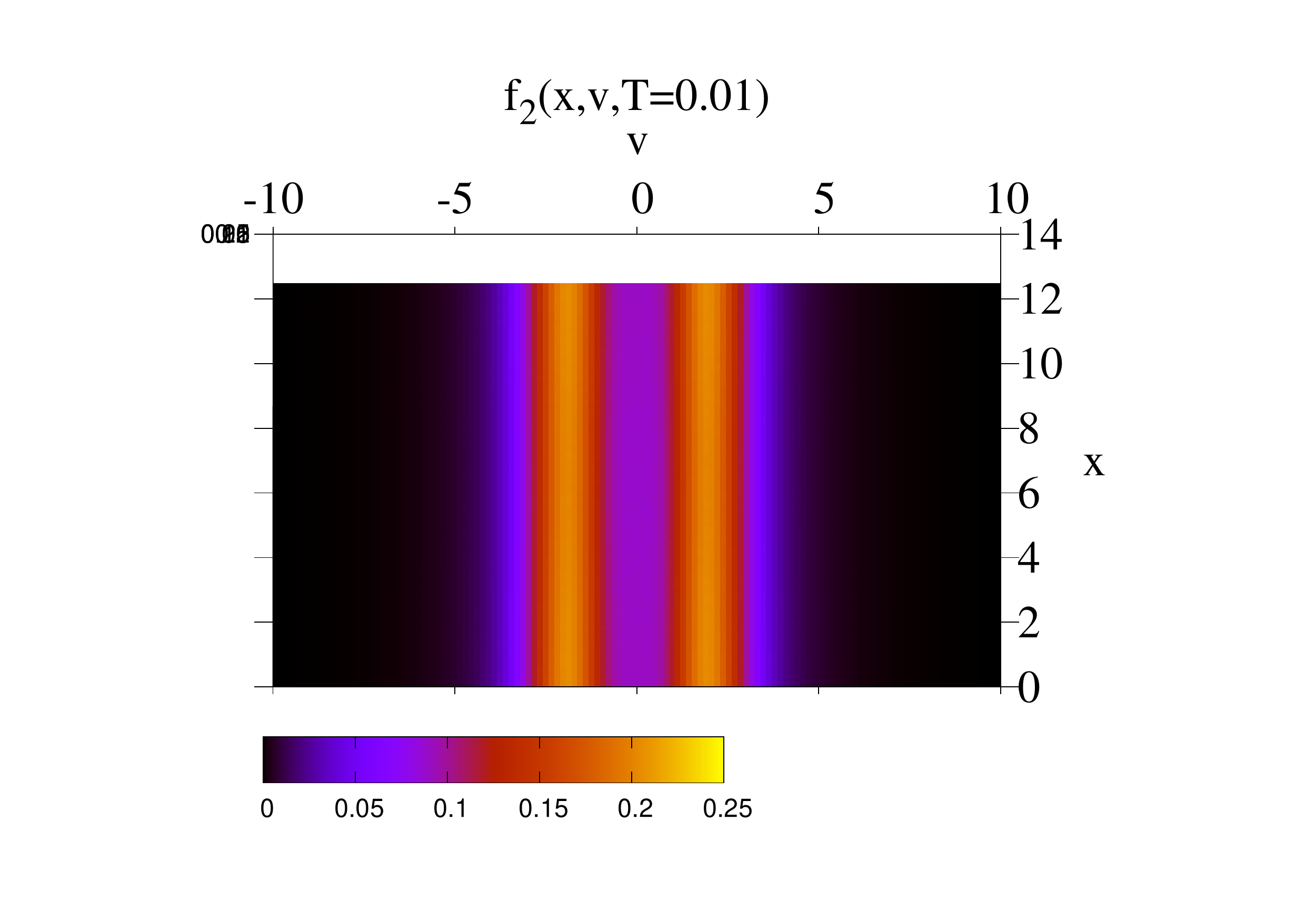}
\includegraphics[angle=-90,trim=3cm 4cm 4cm 3.5cm,width=0.33\textwidth]{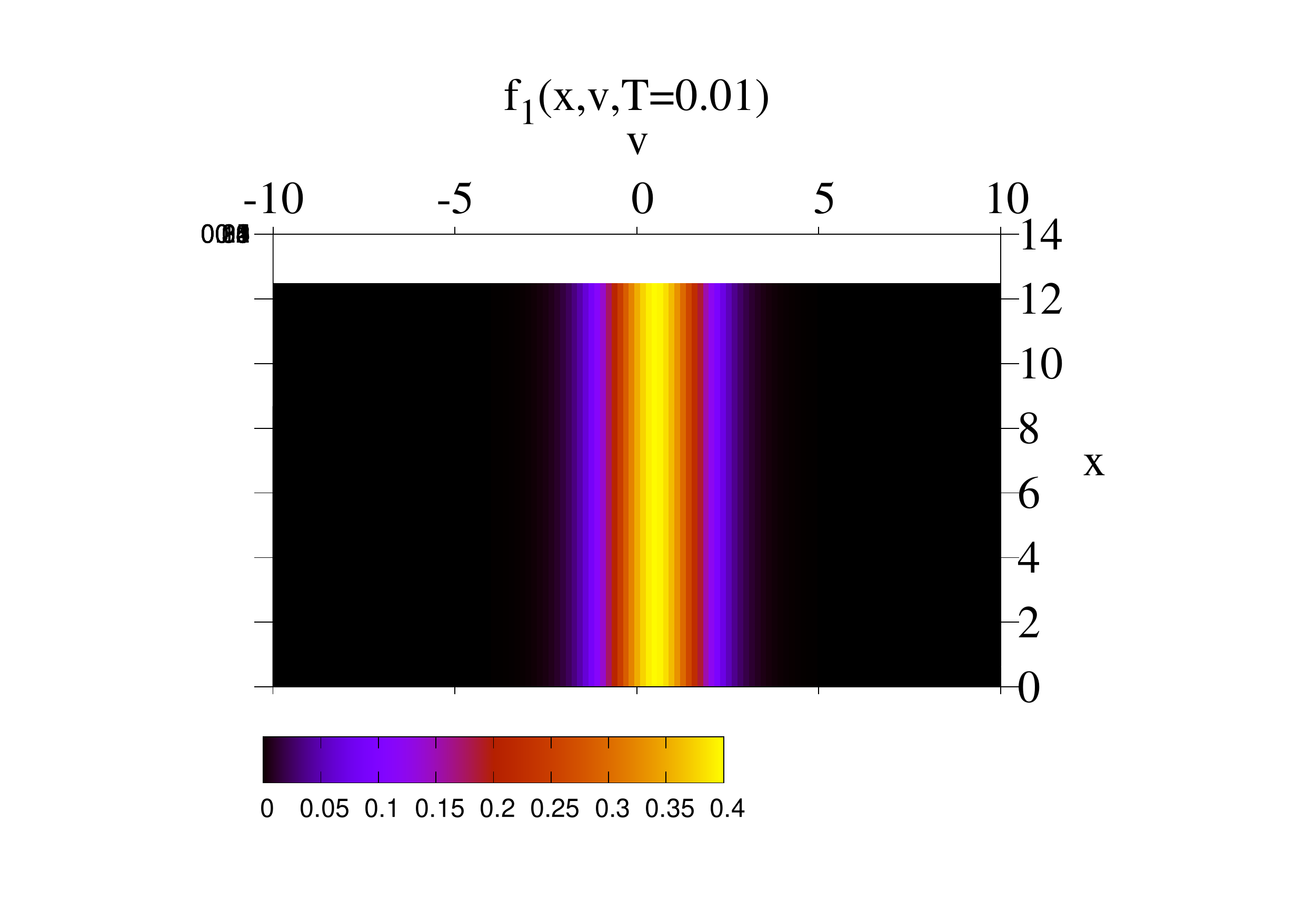}
\includegraphics[angle=-90,trim=2cm 0cm 0cm 1cm,width=0.32\textwidth]{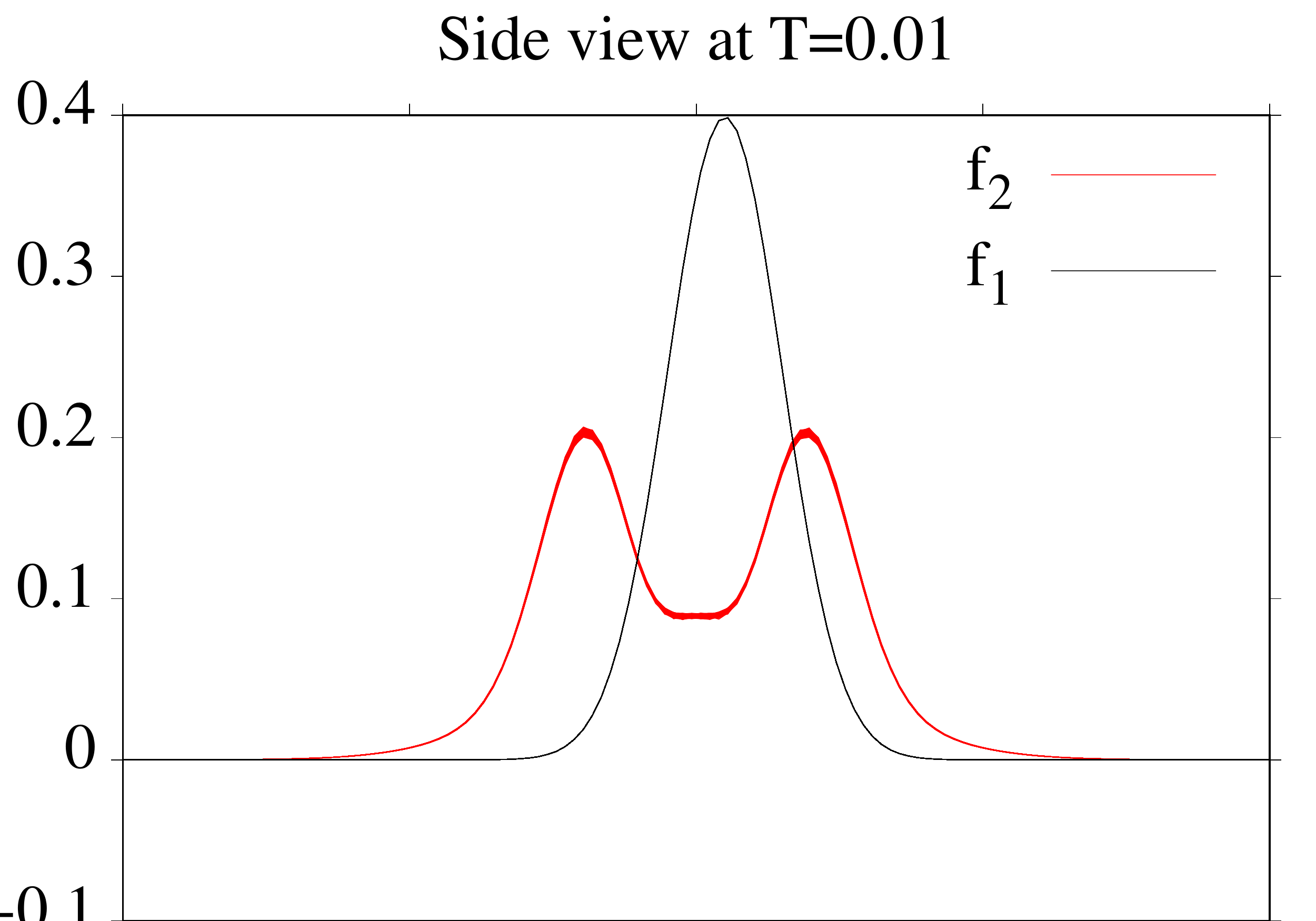}\vspace{0.1cm}
\caption{General case, $\beta=10^{-2}$, $\varepsilon_1=\varepsilon_2=10^{-2}$, $\tilde{\varepsilon_1}=\tilde{\varepsilon_2}=1000$. Distribution functions at time $T=0.01$: $f_2(x,v,T)$ in phase-space (left), $f_1(x,v,T)$ in phase-space (middle), side view of $f_2(x,v,T)$ and $f_1(x,v,T)$ (right).}
\label{fig:1000_1e-2_T0p01}
\end{figure}

\begin{figure}[ht]
\includegraphics[angle=-90,trim=3cm 4cm 4cm 3.5cm,width=0.33\textwidth]{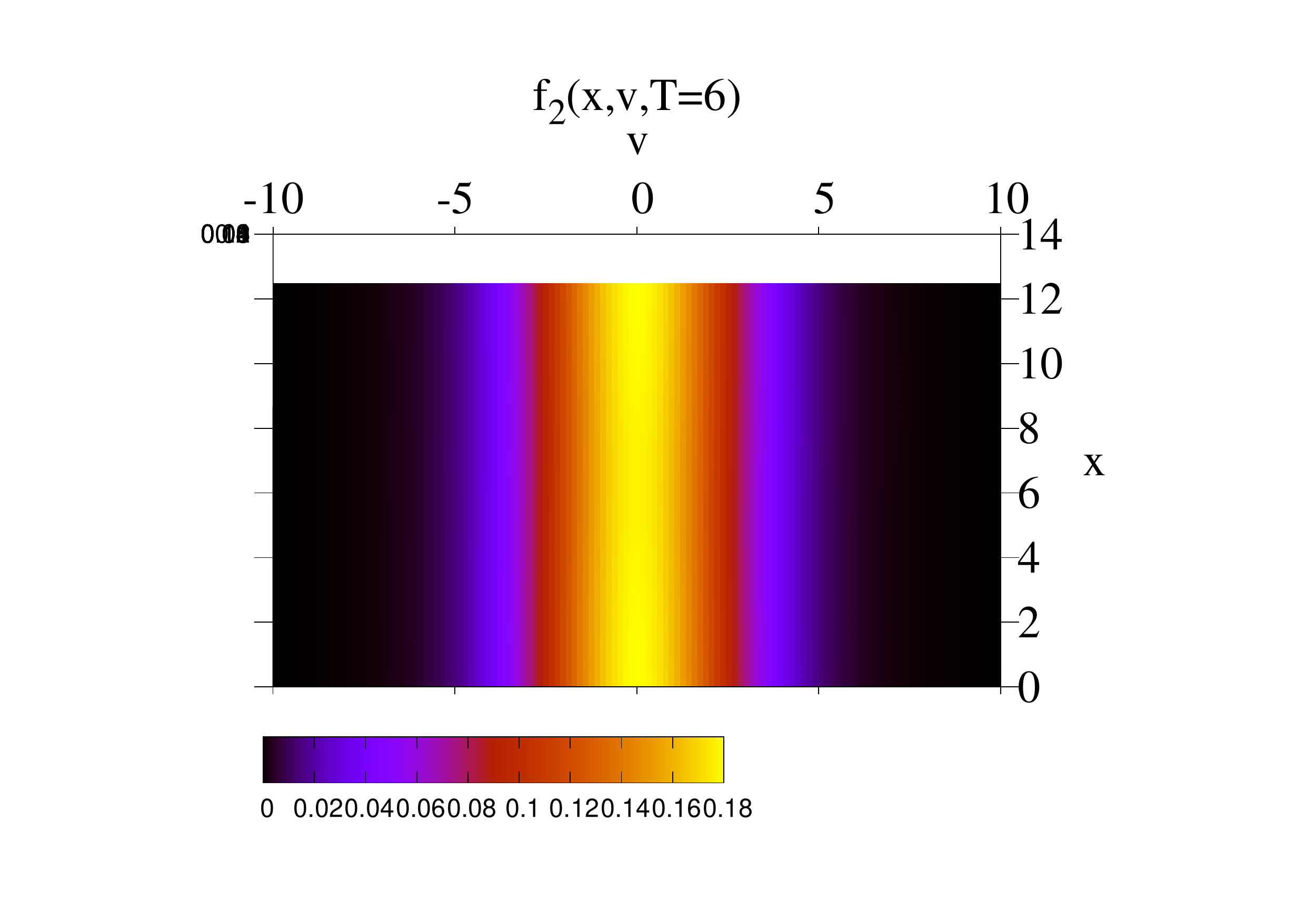}
\includegraphics[angle=-90,trim=3cm 4cm 4cm 3.5cm,width=0.33\textwidth]{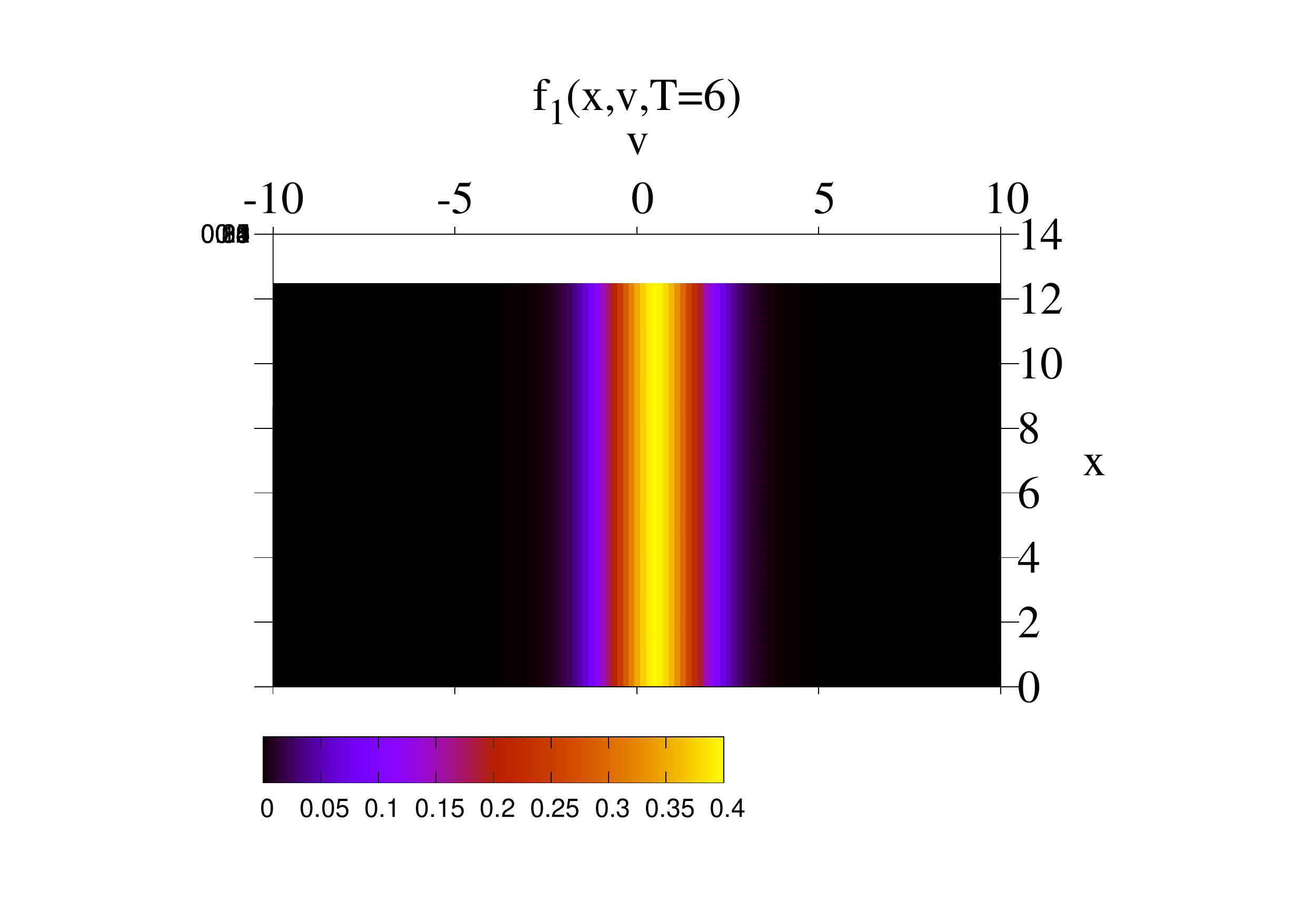}
\includegraphics[angle=-90,trim=2cm 0cm 0cm 1cm,width=0.32\textwidth]{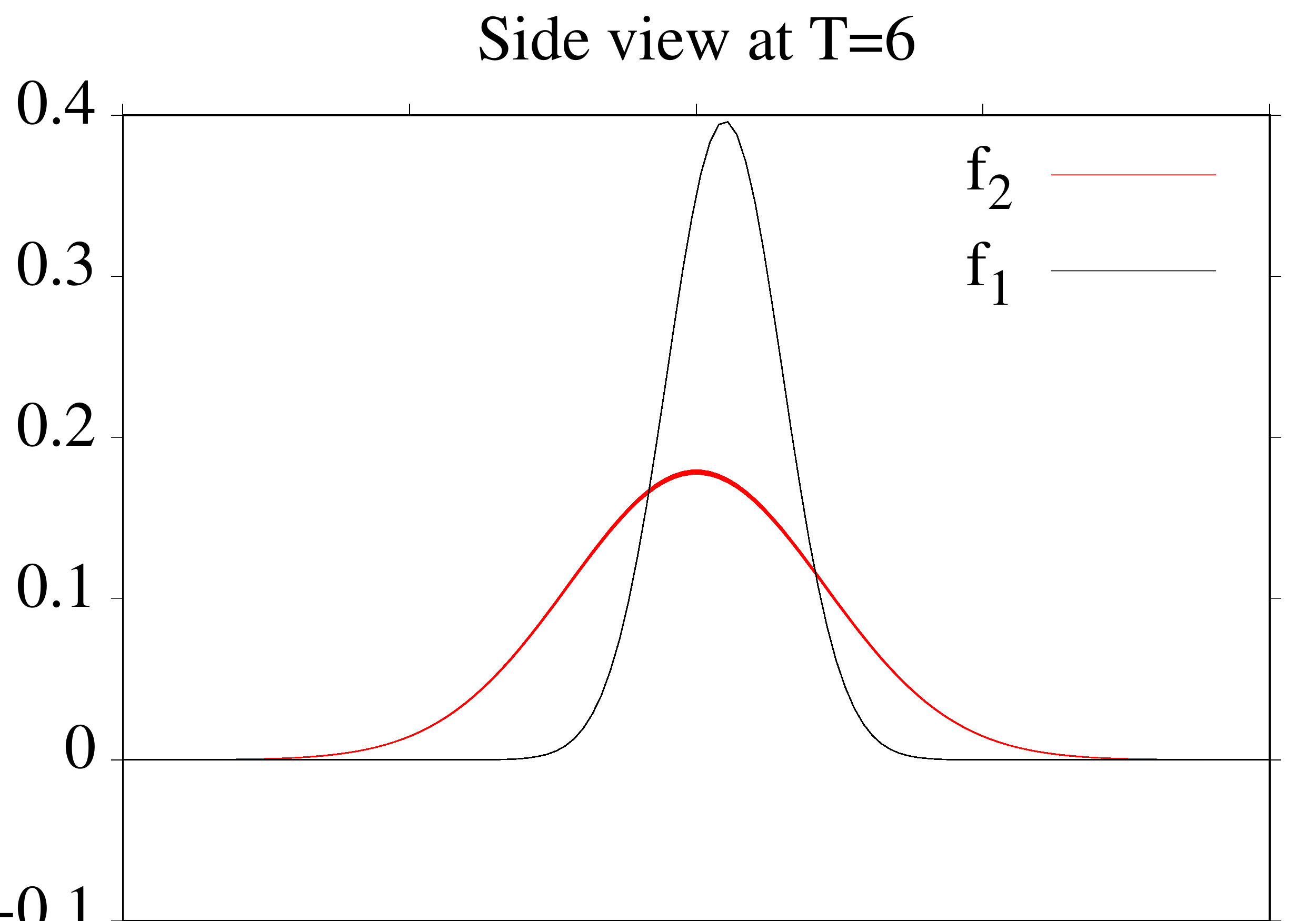}\vspace{0.1cm}
\caption{General case, $\beta=10^{-2}$, $\varepsilon_1=\varepsilon_2=10^{-2}$, $\tilde{\varepsilon_1}=\tilde{\varepsilon_2}=1000$. Distribution functions at time $T=6$: $f_2(x,v,T)$ in phase-space (left), $f_1(x,v,T)$ in phase-space (middle), side view of $f_2(x,v,T)$ and $f_1(x,v,T)$ (right).}
\label{fig:1000_1e-2_T6}
\end{figure}

Species 2 tend to have a Maxwellian distribution function, but collisions between them and species 1 are to infrequent to bring the system to a global equilibrium, at least at time $T=6$. The evolution of $||u_1(x,t)-u_2(x,t)||_\infty$ and $||T_1(x,t)-T_2(x,t)||_\infty$ is presented on figure \ref{fig:1000_1e-2_vte}. 

\begin{figure}[!htb]
\begin{center}
\includegraphics[angle=-90,width=0.49\textwidth]{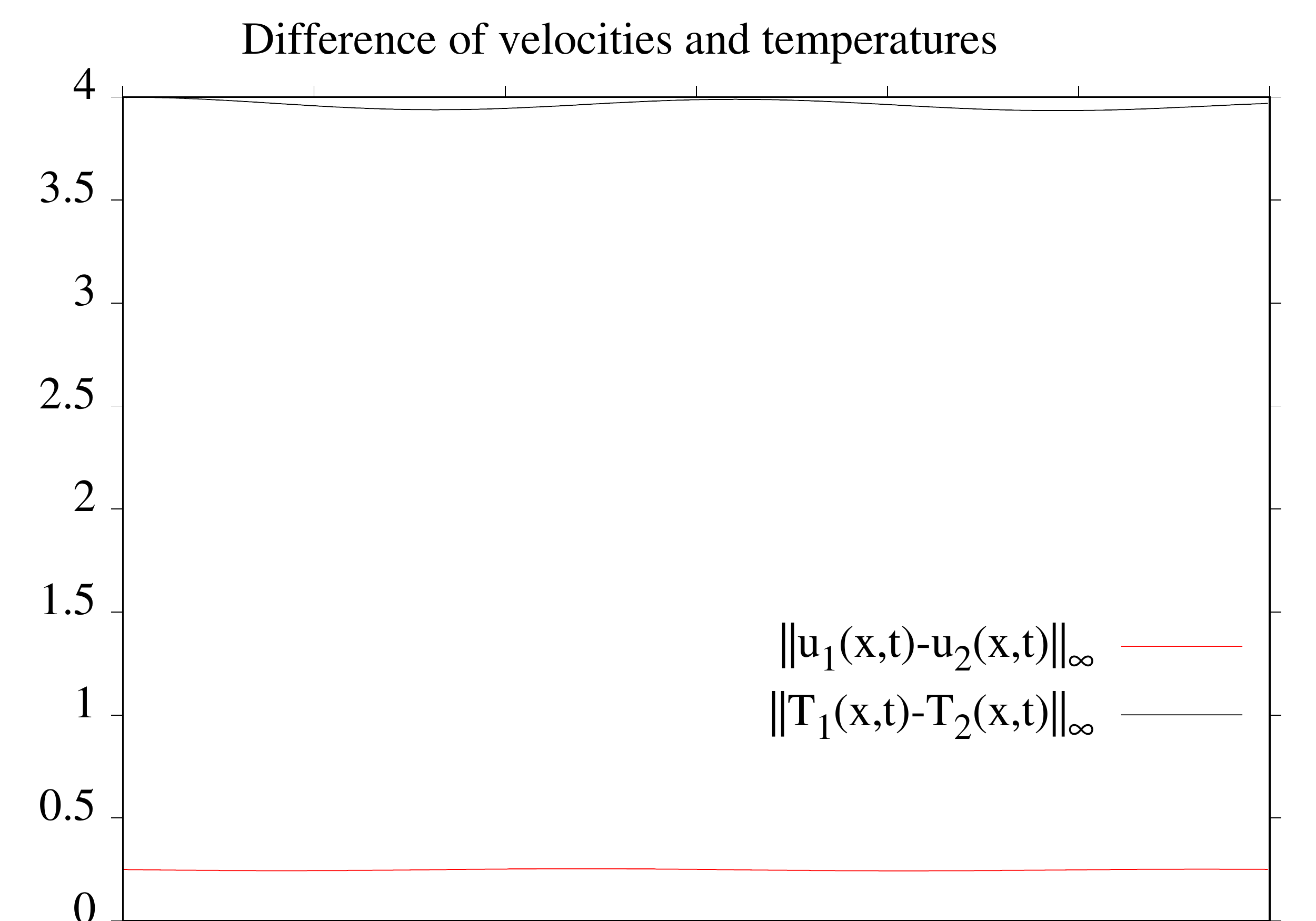}
\caption{General case, $\beta=10^{-2}$, $\varepsilon_1=\varepsilon_2=10^{-2}$, $\tilde{\varepsilon_1}=\tilde{\varepsilon_2}=1000$. Evolution in time of $||u_1(x,t)-u_2(x,t)||_\infty$ and $||T_1(x,t)-T_2(x,t)||_\infty$. 
}
\label{fig:1000_1e-2_vte}
\end{center}
\end{figure}

\textcolor{black}{Finally, we would like to highlight the main advantage of the micro-macro approach considered here: it requires a lower number of particles when approaching the equilibrium. We propose to reproduce the last experiment with only $N_{p_2}=N_{p_1}=5\cdot 10^3$ particles. Side views of the reconstructed distribution functions are presented at time $T=0.01$ on figure \ref{fig:1000_1e-2_Np} left and at time $T=6$ on figure \ref{fig:1000_1e-2_Np} right.}

\begin{figure}[!htb]
\begin{center}
\includegraphics[angle=-90,width=0.49\textwidth]{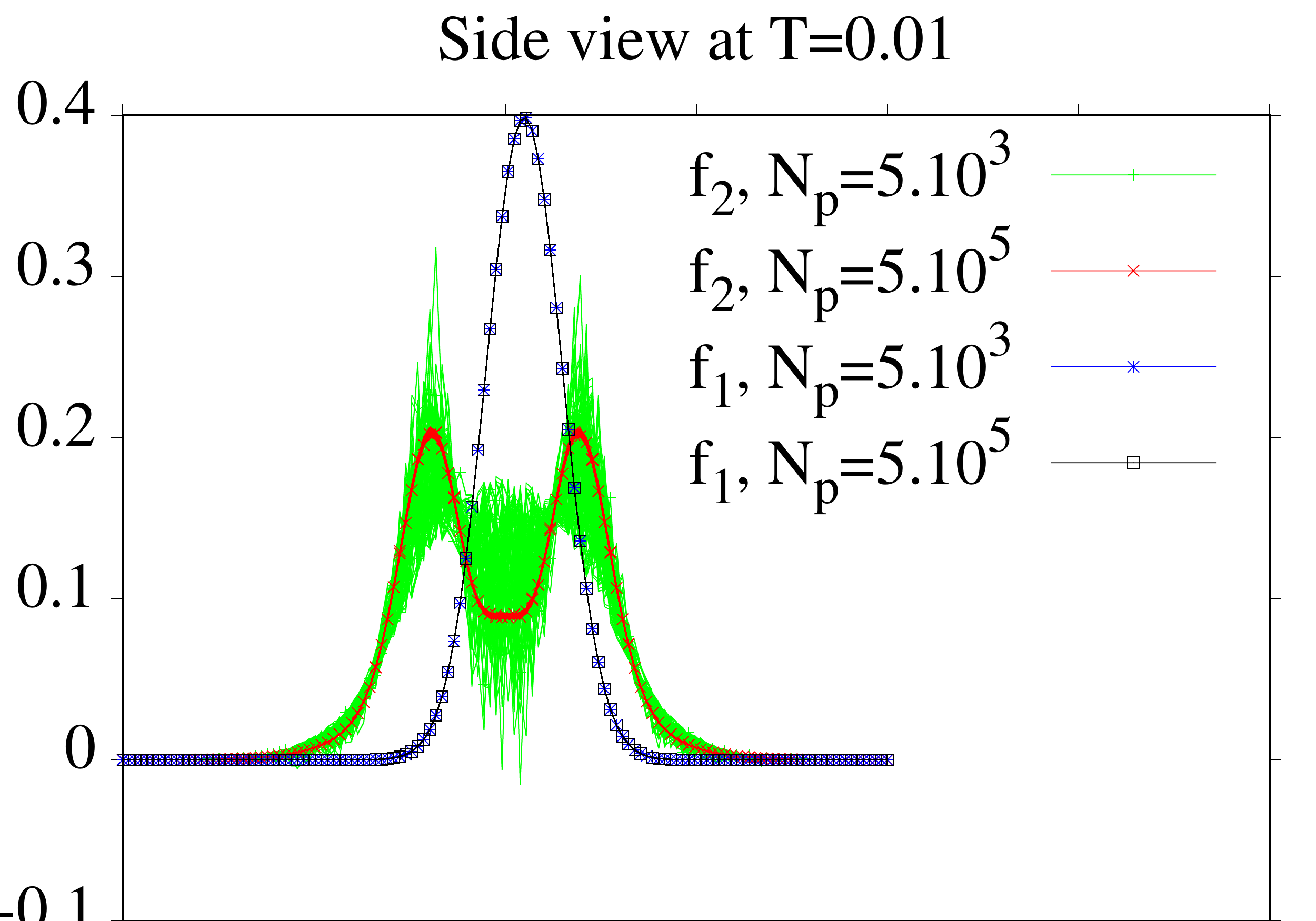}
\includegraphics[angle=-90,width=0.49\textwidth]{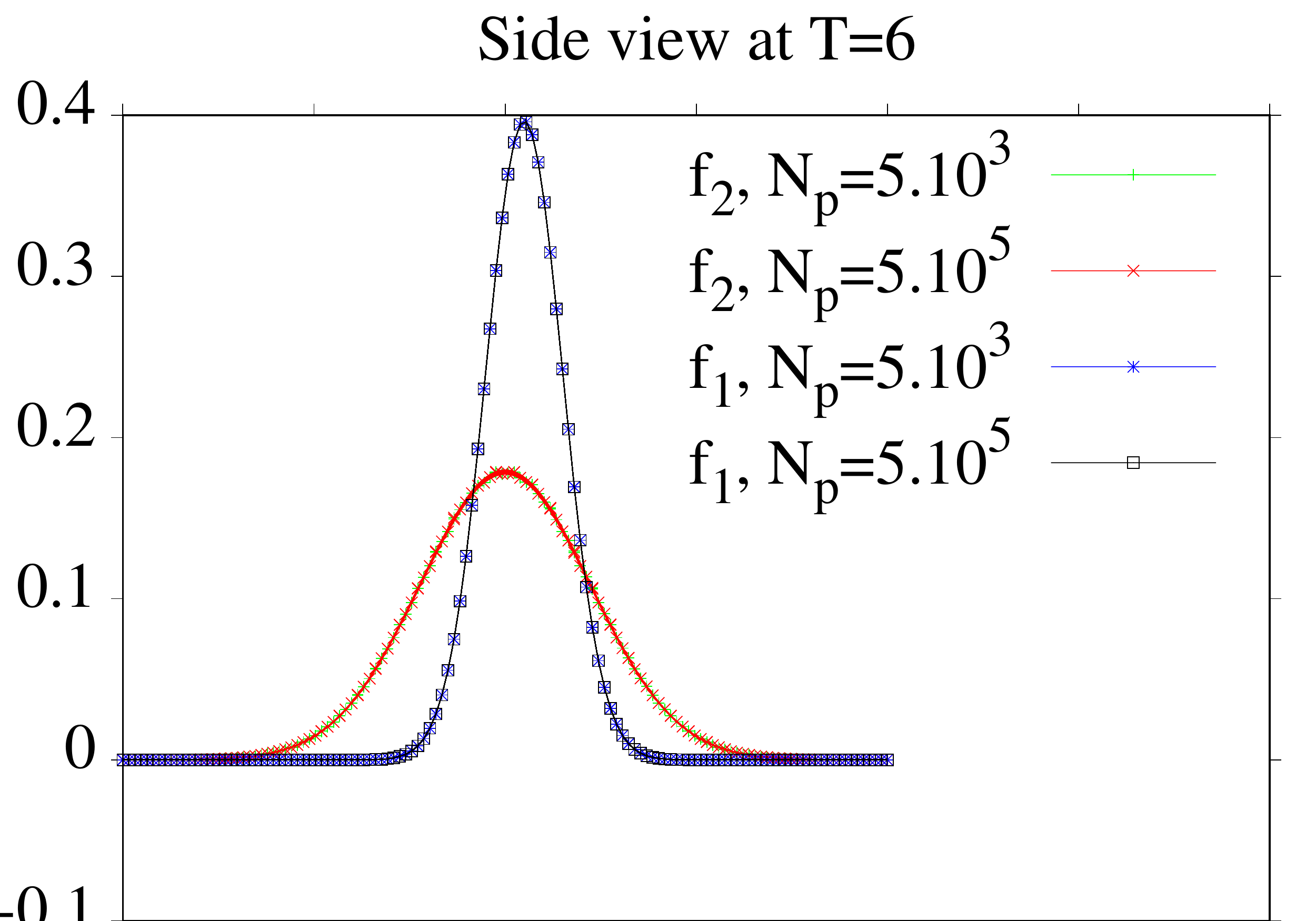}\vspace{0.2cm}
\caption{General case, $\beta=10^{-2}$, $\varepsilon_1=\varepsilon_2=10^{-2}$, $\tilde{\varepsilon_1}=\tilde{\varepsilon_2}=1000$. Side views of $f_2(x,v,T)$ and $f_1(x,v,T)$ at time $0.01$ (left) and $6$ (right). Influence of the number of particles.
}
\label{fig:1000_1e-2_Np}
\end{center}
\end{figure}

\textcolor{black}{The numerical noise that we see on the distribution $f_2$ on figure \ref{fig:1000_1e-2_Np} left means that there is not enough particles initially to represent in a good way $g_{22}$. Indeed, this quantity is big at $T=0$ since $f_2$ is far from an equilibrium. But $f_2$ goes fast towards a Maxwellian, so that $g_{22}$ becomes small and $N_{p_2}=5\times 10^{3}$ particles is then sufficient. This explains why this noise is no longer perceptible as time goes by, for instance at time $T=6$ as we can see on figure \ref{fig:1000_1e-2_Np} right. Moreover, the experiment with $5\times 10^{3}$ particles gives very good results at time $T=6$, similar to the simulation with $5\times 10^{5}$ particles. Of course, this property leads to a reduction of the numerical cost of the method when we are close to equilibrium states.}

Let us remark that in a full particle method on $f_2$ and $f_1$ (in a model without micro-macro decomposition), many more particles are necessary, since the distribution functions $f_2$ and $f_1$ keep the same order of magnitude as time goes by. So the cost of a full particle method is constant with respect to the collision frequencies. On the contrary, the cost of our micro-macro model is reduced when $\varepsilon_2$ and $\varepsilon_1$ decrease.

\section{Conclusion}\label{sec:conclusion}
In this paper, we first present a new model for a two species 1D Vlasov-BGK system based on a micro-macro decomposition. This one, derived from \cite{Pirner}, separates the intra and interspecies collision frequencies. Thus, the convergence of the system towards a global equilibrium can, depending on the values of the collision frequencies, be separated into two steps: the convergence towards the own equilibrium of each species and then towards the global one. Moreover, in the space-homogeneous case, we estimate the convergence rate of the distribution functions towards the equilibrium, as well as the convergence rate of the velocities (resp. temperatures) towards the same value.

Then, we derive a scheme using a particle method for the kinetic micro part and a standard finite volume method for the fluid macro part. In the space-homogeneous case, we illustrate numerically the convergence rates of velocities and temperatures and verify that it is in accordance with the estimations.  Finally, in the general case, we propose testcases to see the evolution in time of the distribution functions and their convergence towards equilibrium. The main advantage of this particle micro-macro approach is the reduction of the numerical cost, especially in the fuid limit, where few particles are sufficient.

\textcolor{black}{Finally, let us remark that the here presented model can be enriched by considering a transport in the velocity direction, induced for example by an electric field. The numerical method can easily be extended to this case, and no major issue would appear. For the sake of simplicity, we have not considered this case in this paper, but we have obtained encouraging results for our testcases.}

\section*{Acknowledgments}
The authors would like to thanks Eric Sonnendr\"ucker for useful discussions and suggestions about this paper.

This work has been supported by the PHC Procope DAAD Program and by the SCIAS Fellowship Program.
Moreover, Ana\"is Crestetto is supported by the French ANR project ACHYLLES ANR-14-CE25-0001 and Marlies Pirner is supported by the German Priority Program 1648,  the Austrian Science Fund (FWF) project F 65 and the Humboldt foundation.

\end{document}